\newtheorem{thm}{Theorem}[section]
\newtheorem{cor}[thm]{Corollary}
\newtheorem{lem}[thm]{Lemma}
\newtheorem{defn}[thm]{Definition}
\newtheorem{rem}[thm]{Remark}
\numberwithin{equation}{section}
\def\N{\mathbb{N}}
\def\R{\mathbb{R}}
\def\ra{\rightarrow}
\def\bs{\backslash}
\def\al{\alpha}
\def\ep{\epsilon}
\def\ka{\kappa}
\def\la{\lambda}
\def\om{\omega}
\def\de{\delta}
\def\ga{\gamma}
\def\Ga{\Gamma}
\begin{document}

\title{Regularity of Transition Fronts in Nonlocal Dispersal Evolution Equations}

\author{Wenxian Shen\footnote{Email: wenxish@auburn.edu}\quad and\quad Zhongwei Shen\footnote{Email: zzs0004@auburn.edu, zhongwei@ualberta.ca} \footnote{Current address: Department of Mathematical and Statistical Sciences, University of Alberta, Edmonton, AB T6G 2G1, Canada}\\Department of Mathematics and Statistics\\Auburn University, AL 36849\\USA}

\date{}

\maketitle

\noindent {\bf Abstract.} It is known that solutions of nonlocal dispersal evolution equations do not become smoother in space as time elapses. This lack of space regularity would cause a lot of  difficulties in studying transition fronts in nonlocal equations. In the present paper, we establish some general criteria concerning space regularity of transition fronts in nonlocal dispersal evolution equations with a large class of nonlinearities, which allows the applicability of various techniques for reaction-diffusion equations to nonlocal equations, and hence serves as an initial and fundamental step for further studying various important qualitative properties of transition fronts such as stability, uniqueness and asymptotic speeds. We also prove the existence of continuously differentiable and increasing interface location functions, which give a better characterization of the propagation of transition fronts and are of great technical importance.

\medskip

\noindent  \textbf{2010 Mathematics Subject Classification.} {35C07, 35K55, 35K57, 92D25}

\medskip

\noindent \textbf{Key Words.} Transition front, nonlocal equation, heterogeneous media, regularity.

\tableofcontents


\section{Introduction}

Reaction-diffusion equations of the form
\begin{equation}\label{eqn-classical}
u_{t}=u_{xx}+f(t,x,u),\quad (t,x)\in\R\times\R
\end{equation}
are widely used to model diffusive systems in applied sciences.
The nonlinearity $f$ arising from many diffusive systems in biology or physics possesses two zeros representing two states, say $0$ and $1$, that is, $f(t,x,0)=f(t,x,1)=0$ for all $(t,x)\in\R\times\R$.
Since the pioneering works of Fish (see \cite{Fish37}) and Kolmogorov, Petrowsky and Piscunov (see \cite{KPP37}) on the traveling waves of \eqref{eqn-classical} connecting the two constant states, i.e., $u\equiv 0$ and $u\equiv 1$, in the case $f(t,x,u)=u(1-u)$, a vast amount of literature has been carried out to the understanding of front-like solutions connecting $u\equiv 1$ and $u\equiv 0$ in such an equation and its generalized forms. We refer to \cite{ArWe75,ArWe78,FiMc77,FiMc80,Kam76,Uch78,Xi00} and references therein for works in homogeneous media, i.e., $f(t,x,u)=f(u)$. Recently, there is a lot of progress concerning \eqref{eqn-classical} in heterogeneous media. We refer to \cite{BeHa02,DiHaZh14,MeRoSi10,MNRR09,Na14,NoRy09,NRRZ12,TaZhZl14,We02,Zl12} and references therein for works in space heterogeneous media, i.e., $f(t,x,u)=f(x,u)$, and to \cite{AlBaCh99,NaRo12,Sh99-1,Sh99-2,Sh06,Sh11,ShSh14,ShSh14-1} and references therein for works in time heterogeneous media, i.e., $f(t,x,u)=f(t,x)$. There are also some works in space-time heterogeneous media (see e.g. \cite{KoSh14,LiZh1,LiZh2,Na09,Na10, Sh04,Sh11-1,We02}), but it remains widely open.

When using equation \eqref{eqn-classical} to model a diffusive system in applied sciences, it is implicitly assumed that the internal
interaction range of organisms in the system is infinitesimal  and that the internal dispersal can be described by random walk.
 However, in practice,  a diffusive system may exhibit long range internal interaction. Equation \eqref{eqn-classical} is then no long suitable to model such a system. More precisely, the random dispersal operator $\partial_{xx}$ is no long suitable. As a substitute, the nonlocal dispersal operator is introduced (see e.g. \cite{Fi03,GHHMV05} for some background) and we are now concerned with the following integral equation
\begin{equation}\label{main-eqn}
u_{t}=J\ast u-u+f(t,x,u),\quad (t,x)\in\R\times\R,
\end{equation}
where $J$ is a convolution kernel and $[J\ast u](x)=\int_{\R}J(x-y)u(y)dy=\int_{\R}J(y)u(x-y)dy$.
There is also a great amount of research toward  the understanding of front-like solutions of \eqref{main-eqn} connecting $u\equiv 0$ and $u\equiv 1$. See \cite{BaCh06, BaCh02, BaFiReWa97,CaCh04,Ch97,Cov-thesis,CoDu05,CoDu07,Sc80} and references therein for the study in the  homogeneous case $f(t,x,u)\equiv f(u)$.  See \cite{BaCh99, FCh} for the study in the case that $f(t,x,u)\equiv f(t,u)$ is periodic or almost periodic in $t$. In \cite{CDM13,ShZh10,ShZh12-1,ShZh12-2}, the authors investigated \eqref{main-eqn} in space periodic monostable media, i.e., $f(t,x,u)=f(x,u)$ is of monostable type and periodic in $x$, and proved the existence of spreading speeds and periodic traveling waves. In \cite{RaShZh}, the authors studied the existence of spreading speeds and traveling waves of \eqref{main-eqn} in space-time periodic monostable media. Very recently, both Berestycki, Coville and Vo (see \cite{BCV14}), and Lim and Zlato\v{s} (see \cite{LiZl14}) investigated \eqref{main-eqn} in space heterogeneous monostable media. While Berestycki, Coville and Vo studied principal eigenvalue, positive solution and long-time behavior of solutions, Lim and Zlato\v{s} proved the existence of transition fronts in the sense of Berestycki-Hamel (see \cite{BeHa07,BeHa12}). In \cite{ShSh14-2,ShSh14-3}, the authors studied \eqref{main-eqn} in the time heterogeneous media of ignition type, and prove the existence, regularity and stability of transition fronts.

However, comparing to the classical random dispersal case, i.e., \eqref{eqn-classical}, results concerning front propagation for \eqref{main-eqn} are still very limited. One of the difficulties arising in the study of front propagation dynamics of \eqref{main-eqn} is that solutions of \eqref{main-eqn} do not become smoother as time $t$ elapses due to the fact that the semigroup generated by the nonlocal dispersal operator $u\mapsto J\ast u-u$ has no regularizing effect. The objective of the present paper is to investigate the space regularity of transition fronts of \eqref{main-eqn} with various nonlinearities, including the monostable nonlinearity, the ignition nonlinearity, and the bistable nonlinearity. The results to be developed in this paper have important applications in the study of stability, uniqueness, asymptotic, etc. of transition fronts of \eqref{main-eqn}.

To state the main results of this paper, we first introduce two standard hypotheses.

\begin{itemize}
\item[\bf{(H1)}] $J:\R\to\R$ satisfies $J\not\equiv0$, $J\in C^{1}$, $J(x)=J(-x)\geq0$ for $x\in\R$, $\int_{\R}J(x)dx=1$ and
\begin{equation*}
\int_{\R}J(x)e^{\ga x}dx<\infty,\quad \int_{\R}|J'(x)|e^{\ga x}dx<\infty,\quad\forall\ga\in\R.
\end{equation*}
\end{itemize}

\begin{itemize}
\item[\bf{(H2)}] There exist $C^{2}$ functions $f_{B}:[0,1]\to\R$ and  $f_{M}:[0,1]\to\R$ such that
\begin{equation*}
f_{B}(u)\leq f(t,x,u)\leq f_{M}(u),\quad (t,x,u)\in\R\times\R\times[0,1];
\end{equation*}
moreover, the following conditions hold:
\begin{itemize}
\item $f:\R\times\R\times[0,1]\to\R$ is continuous and continuously differentiable in $x$ and $u$, and satisfies
\begin{equation*}
\sup_{(t,x,u)\in\R\times\R\times[0,1]}|f_{x}(t,x,u)|<\infty\quad\text{and}\quad\sup_{(t,x,u)\in\R\times\R\times[0,1]}|f_{u}(t,x,u)|<\infty;
\end{equation*}

\item there exist $\theta_{1}\in(0,1)$ such that $f_{u}(t,x,u)\leq0$ for all $(t,x,u)\in\R\times\R\times[\theta_{1},1]$;

\item $f_{B}$ is of standard bistable type, that is, $f_{B}(0)=f_{B}(\theta)=f_{B}(1)=0$ for some $\theta\in(0,1)$, $f_{B}(u)<0$ for $u\in(0,\theta)$, $f_{B}(u)>0$ for $u\in(\theta,1)$ and $\int_{0}^{1}f_{B}(u)du>0$; moreover, $u_{t}=J\ast u-u+f_{B}(u)$ admits a traveling wave $\phi_{B}(x-c_{B}t)$ with $\phi_{B}(-\infty)=1$, $\phi_{B}(\infty)=0$ and $c_{B}\neq0$;

\item  $f_{M}$ is of standard monostable type, that is, $f_{M}(0)=f_{M}(1)=0$ and $f_{M}(u)>0$ for $u\in(0,1)$.
\end{itemize}
\end{itemize}

We remark that $c_{B}$ must be positive. Observe that nonlinear functions satisfying (H2) include the monostable nonlinearity, the ignition nonlinearity, and the bistable nonlinearity satisfying the conditions below.

\begin{itemize}
\item[\rm(i)] Monostable or Fisher-KPP nonlinearity.

\item[] Standard monostable nonlinearity $f(\cdot)$: $f(0)=f(1)=0$, $f(u)>0$ for $u\in(0,1)$, and $\frac{f(u)}{u}$ is decreasing in $u$.

\item[] General monostable nonlinearity $f(\cdot,\cdot,\cdot)$: $f_{\min}(u)\le f(t,x,u)\le f_{\max}(u)$ for $(t,x)\in\R\times\R$ and $u\in[0,1]$, where $f_{\min}(\cdot)$ and $f_{\max}(u)$ are two standard monostable nonlinearities, and $\frac{f(t,x,u)}{u}$ decreasing in $u$ for any $(t,x)\in\R\times\R$.

\item[{\rm (ii)}] Ignition nonlinearity.

\item[] Standard ignition nonlinearity $f(\cdot)$: $f(u)=0$ for $u\in [0,\theta]\cup\{1\}$,
$f(u)>0$ for $u\in(\theta,1)$, and $f_u(1)<0$, where $\theta\in(0,1)$ is referred to as the {\it ignition temperature}.

\item[] General ignition nonlinearity $f(\cdot,\cdot,\cdot)$: $f_{\min}(u)\le f(t,x,u)\le f_{\max}(u)$ for $(t,x)\in\R\times\R$ and $u\in[0,1]$,
where $f_{\min}(\cdot)$ and $f_{\max}(\cdot)$ are two standard ignition nonlinearities.

\item[{\rm (iii)}] Bistable nonlinearity.

\item[] Standard bistable nonlinearity $f(\cdot)$: $f(0)=f(\theta)=f(1)=0$, $f(u)<0$ for $u\in(0,\theta)$, $f(u)>0$ for $u\in(\theta,1)$, and $f_u(0)<0$, $f_u(1)<0$, $f_u(\theta)>0$,
where $\theta\in(0,1)$.

\item[] General bistable nonlinearity $f(\cdot,\cdot,\cdot)$: $f_{\min}(u)\le f(t,x,u)\le f_{\max}(u)$ for $(t,x)\in\R\times\R$ and $u\in [0,1]$, where $f_{\min}(\cdot)$ and $f_{\max}(\cdot)$ are two standard bistable nonlinearities with $\int_{0}^{1}f_{\min}(u)du>0$ and $u_{t}=J\ast u-u+f_{\min}(u)$ having traveling waves with nonzero speed.
 \end{itemize}

We remark that (H2) can also be applied to a general bistable nonlinearity $f(t,x,u)$ with $\int_0^1 f_{\max}(u)du<0$ and $u_{t}=J\ast u-u+f_{\max}(u)$ having traveling waves with nonzero speed. In fact, let $v(t,x)=1-u(t,x)$. Then, $v(t,x)$ satisfies
$$
v_t=J\ast v-v+\tilde f(t,x,v),\quad (t,x)\in\R\times\R,
$$
where $\tilde f(t,x,v)=-f(t,x,1-v)$. Hence
$$
\tilde f_{\min}(v)\le \tilde f(t,x,v)\le \tilde f_{\max}(v),\quad (t,x,v)\in\R\times\R\times[0,1]
$$
where $\tilde f_{\min}(v)=-f_{\max}(1-v)$ and $\tilde f_{\max}(v)=-f_{\min}(1-v)$.
Clearly, $\tilde f_{\min}(\cdot)$ and $\tilde f_{\max}(\cdot)$ are two standard bistable nonlinearities and
$\int_0^1 \tilde f_{\min}(v)dv>0$ and $u_{t}=J\ast u-u+\tilde{f}_{\min}(u)$ admits traveling waves with nonzero speed.

We also remark that, besides monostable, ignition, and bistable nonlinearities, nonlinear functions satisfying (H2) include those having more than one zeros between $0$ and $1$.

Next, we recall the definition of transition fronts of \eqref{main-eqn} connecting $u\equiv 0$ and $u\equiv 1$.

\begin{defn}\label{transition-front-defn}
Suppose that $f(t,x,0)=f(t,x,1)=0$ for all $(t,x)\in\R\times\R$. A global-in-time solution $u(t,x)$ of \eqref{main-eqn} is called a (right-moving) {\rm transition front} (connecting $0$ and $1$) in the sense of Berestycki-Hamel (see \cite{BeHa07,BeHa12}, also see \cite{Sh99-1,Sh99-2}) if $u(t,x)\in(0,1)$ for all $(t,x)\in\R\times\R$ and there exists a function $X:\R\to\R$, called {\rm interface location function}, such that
\begin{equation*}
\lim_{x\to-\infty}u(t,x+X(t))=1\,\,\text{and}\,\,\lim_{x\to\infty}u(t,x+X(t))=0\,\,\text{uniformly in}\,\,t\in\R.
\end{equation*}
\end{defn}

The notion of a transition front is a proper generalization of a traveling wave in homogeneous media and a periodic (or pulsating) traveling wave in periodic media. The interface location function $X(t)$ tells the position of the transition front $u(t,x)$ as time $t$ elapses, while the uniform-in-$t$ limits (the essential property in the definition) shows the \textit{bounded interface width}, that is,
\begin{equation*}
\forall\,\,0<\ep_{1}\leq\ep_{2}<1,\quad\sup_{t\in\R}{\rm diam}\{x\in\R|\ep_{1}\leq u(t,x)\leq\ep_{2}\}<\infty.
\end{equation*}
Notice, if $\xi(t)$ is a bounded function, then $X(t)+\xi(t)$ is also an interface location function. Thus, interface location function is not unique. But, it is easy to check that if $Y(t)$ is another interface location function, then $X(t)-Y(t)$ is a bounded function. Hence, interface location functions are unique up to addition by bounded functions.

We see that neither the definition nor the equation \eqref{main-eqn} guarantees any space regularity of transition fronts. In fact, there is even no guarantee that a transition front $u(t,x)$ of \eqref{main-eqn} is continuous in $x$ (we refer to \cite{BaFiReWa97} for the existence of discontinuous traveling waves in the bistable case). This lack of space regularity indeed causes a lot of troubles in studying transition fronts because $\rm(i)$ space regularity of approximating solutions is required to ensure the convergence to transition fronts; $\rm(ii)$ space regularity of transition fronts lays the foundation for applying various techniques for reaction-diffusion equations to nonlocal equations, and hence, for further studying various qualitative properties such as stability and uniqueness. Hence, it is very important to study the space regularity of special solutions.

In the present paper, we intend to establish some general criteria concerning the space regularity of transition fronts of \eqref{main-eqn}. More precisely, we want to know whether a transition front $u(t,x)$ of \eqref{main-eqn} is continuously differentiable in $x$. To state our first result, we further introduce the following hypothesis.

\begin{itemize}
\item[\bf{(H3)}] There exist $\theta_{0}\in(0,\theta_{1})$ and $\ka_{0}>0$ such that
\begin{equation*}
f_{u}(t,x,u)\leq1-\ka_{0},\quad (t,x,u)\in\R\times\R\times[0,\theta_{0}].
\end{equation*}
\end{itemize}

We prove

\begin{thm}\label{thm-regularity-of-tf}
Suppose (H1)-(H3). Let $u(t,x)$ be an arbitrary transition front of \eqref{main-eqn}. Then, $u(t,x)$ is regular in space in the following sense:  for any $t\in\R$, $u(t,x)$ is continuously differentiable in $x$ and satisfies $\sup_{(t,x)\in\R\times\R}|u_{x}(t,x)|<\infty$.
\end{thm}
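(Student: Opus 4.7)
The plan is to analyze the difference quotient $v_h(t,x)=\frac{u(t,x+h)-u(t,x)}{h}$ for $h\neq 0$ and show it is bounded uniformly in $h$. Subtracting \eqref{main-eqn} at $x$ from the same equation at $x+h$ and dividing by $h$ yields, for each fixed $x$, the scalar linear ODE in $t$
$$\partial_t v_h+(1-a_h(t,x))\,v_h=g_h(t,x),$$
where $a_h(t,x)=f_u(t,x+h,\xi_h(t,x))$ for some intermediate value $\xi_h$ between $u(t,x)$ and $u(t,x+h)$, and $g_h(t,x)=(J\ast v_h)(t,x)+b_h(t,x)$ with $b_h(t,x)=\frac{f(t,x+h,u(t,x))-f(t,x,u(t,x))}{h}$. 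The crucial observation is that $g_h$ is uniformly bounded in $h,t,x$: (H2) yields $|b_h|\le\sup|f_x|$, while (H1) ensures that $x\mapsto(J\ast u)(t,x)$ is $C^1$ with derivative $J'\ast u$, so the mean value theorem gives $(J\ast v_h)(t,x)=(J'\ast u)(t,x+\theta_h h)$ for some $\theta_h\in(0,1)$, which is bounded by $\|J'\|_{L^1}$. Hence $\|g_h\|_\infty\le B_0$ for some $B_0$ independent of $h$.

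Using the integrating factor $\exp\bigl(\int_s^t(1-a_h(\tau,x))\,d\tau\bigr)$, the Duhamel formula reads
$$v_h(t,x)=E_h(t_0,t,x)\,v_h(t_0,x)+\int_{t_0}^t E_h(s,t,x)\,g_h(s,x)\,ds,\quad E_h(s,t,x):=\exp\!\Bigl(-\int_s^t(1-a_h(\tau,x))\,d\tau\Bigr).$$
The decisive step is a uniform decay estimate $E_h(s,t,x)\le C_0\,e^{-\al(t-s)}$ with positive constants $C_0,\al$ independent of $h$, $s\le t$, $x$. For this I split $1-a_h(\tau,x)$ into three regimes according to the value of $u(\tau,x)$: if $u(\tau,x)\le\tha_0$, then (H3) and continuity of $f_u$ (for $|h|$ small) give $1-a_h\ge\ka_0/2$; if $u(\tau,x)\ge\tha_1$, then (H2) gives $1-a_h\ge 1/2$; otherwise $1-a_h\ge -\sup|f_u|$. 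The bounded interface width from Definition \ref{transition-front-defn}, combined with a positive-speed lower bound on the interface $X(\cdot)$ (obtained by placing the shifted bistable traveling wave $\phi_B(\cdot-c_Bt)$, $c_B>0$, below $u$ via the comparison principle using the minorant $f_B$), implies that for every $x$ the Lebesgue measure of $\{\tau\le t:u(\tau,x)\in(\tha_0,\tha_1)\}$ is bounded by a universal constant $T^*$. Thus $\int_s^t(1-a_h)\,d\tau\ge\al(t-s)-C$ for suitable $\al,C>0$, which gives the exponential decay.

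The uniform bound then follows at once: since for each fixed $h\neq 0$ the crude estimate $|v_h|\le 1/|h|$ and the decay of $E_h$ show that $E_h(t_0,t,x)\,v_h(t_0,x)\to 0$ as $t_0\to-\infty$, one obtains $|v_h(t,x)|\le C_0 B_0/\al$ uniformly in $h,t,x$. To upgrade this to $C^1$-regularity of $u$ in $x$, I pass to the limit $h\to 0$ in the Duhamel identity by dominated convergence (using the exponential domination and the pointwise convergences $a_h(\tau,x)\to f_u(\tau,x,u(\tau,x))$ and $g_h(\tau,x)\to(J'\ast u)(\tau,x)+f_x(\tau,x,u(\tau,x))$), yielding the explicit representation
$$u_x(t,x)=\int_{-\infty}^t\exp\!\Bigl(-\int_s^t(1-f_u(\tau,x,u(\tau,x)))\,d\tau\Bigr)\bigl[(J'\ast u)(s,x)+f_x(s,x,u(s,x))\bigr]\,ds,$$
whose integrand is continuous in $x$ (by continuity of $f_u$, $f_x$, $u$ in $x$ and of $J'\ast u$) and exponentially dominated, so $u_x$ is continuous in $x$ with $\sup|u_x|\le C_0 B_0/\al$.

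The hardest part of the plan is the decay estimate for $E_h$, and inside it the uniform-in-$x$ bound on the time that $u(\cdot,x)$ spends in the intermediate band $(\tha_0,\tha_1)$. This relies on a propagation property of the transition front, namely monotone forward motion of the interface at positive speed, which is obtained via comparison with the bistable traveling wave furnished by (H2). Without (H3), the quantity $1-a_h$ could vanish on the ahead-of-front region $\{u\approx 0\}$ (as happens for Fisher-KPP nonlinearities), and the exponential decay of $E_h$—and with it the whole argument—would break down.
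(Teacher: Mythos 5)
Your proposal follows essentially the same route as the paper's proof: the same difference-quotient ODE in $t$ for fixed $x$, the same uniform bound on the forcing term via $\|J'\|_{L^{1}}$ and $\sup|f_{x}|$, the same exponential decay of the integrating factor obtained by bounding the time each fixed $x$ spends near the interface (via bounded interface width plus the rightward propagation estimate, itself proved by comparison with the unbalanced bistable wave), the same $t_{0}\to-\infty$ device to kill the uncontrolled initial term, and the same dominated-convergence passage to the limit $h\to0$ yielding the integral representation of $u_{x}$.

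One step, however, is not airtight as written. You split the regimes for $1-a_{h}(\tau,x)$ according to the value of $u(\tau,x)$ alone, but $a_{h}(\tau,x)=f_{u}(\tau,x+h,\xi_{h})$ involves an intermediate value $\xi_{h}$ between $u(\tau,x)$ and $u(\tau,x+h)$, and at this stage of the argument nothing prevents $u(\tau,x+h)$ from being far from $u(\tau,x)$ --- continuity of $u$ in $x$ is precisely what is being proved (the paper even recalls that discontinuous fronts exist for related equations). Thus $u(\tau,x)\leq\theta_{0}$ does not force $\xi_{h}\in[0,\theta_{0}]$, and ``continuity of $f_{u}$ for $|h|$ small'' cannot rescue the bound $1-a_{h}\geq\kappa_{0}/2$; the same problem occurs in the regime $u(\tau,x)\geq\theta_{1}$. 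The repair is exactly the device the paper uses: define the regimes by the \emph{position} of $x$ relative to the interface with a margin $\delta_{0}\geq|h|$ built in. For instance, $x>X^{+}_{\theta_{0}}(\tau)+\delta_{0}$ guarantees that both $u(\tau,x)\leq\theta_{0}$ and $u(\tau,x+h)\leq\theta_{0}$, hence $\xi_{h}\in[0,\theta_{0}]$ and (H3) applies directly; similarly $x<X^{-}_{\theta_{1}}(\tau)-\delta_{0}$ gives $\xi_{h}\in[\theta_{1},1]$ and $a_{h}\leq0$ by (H2). The set of times where neither holds is still of uniformly bounded measure by your propagation argument, so the decay estimate for $E_{h}$ and everything downstream survives unchanged. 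Finally, the pointwise convergence $a_{h}\to f_{u}(\tau,x,u(\tau,x))$ invoked in the last limit requires the Lipschitz continuity of $u(\tau,\cdot)$ that your uniform bound on $v_{h}$ has just delivered; this should be recorded explicitly (the paper isolates it as a separate lemma before taking $h\to0$).
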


We remark that Theorem \ref{thm-regularity-of-tf} can be proven essentially due to the assumptions $\int_{0}^{1}f_{B}(u)du>0$ and the existence of traveling waves of 
\begin{equation}\label{bistable-tw-eqn-000000}
u_{t}=J\ast u-u+f_{B}(u)
\end{equation}
with nonzero speed in (H2), which corresponds to the unbalanced case in the bistable case. If we drop these assumptions, then, in the bistable case, there is no hope that Theorem \ref{thm-regularity-of-tf} is true without additional assumptions on $f(t,x,u)$, since discontinuous traveling waves of \eqref{bistable-tw-eqn-000000} with zero speed were constructed in \cite{BaFiReWa97}, where necessary conditions for the existence of discontinuous traveling waves are also given. Thus, our results are kind of sharp and are compatible with the results for traveling waves in the monostable case, the ignition case and the unbalanced bistable case. It is worthwhile to point out that in the case \eqref{bistable-tw-eqn-000000} admits discontinuous traveling waves, it may also admits non-monotone waves (see \cite[Theorem 5.4]{Ch97}).

Although the assumptions on $f_{B}$ are automatically true in the monostable case and the ignition case, it does cause some restrictions in the bistable case, and hence, there remains an interesting problem, that is, whether transition fronts in the bistable case are regular in space when \eqref{bistable-tw-eqn-000000} admits only smooth stationary waves, i.e., smooth traveling waves with speed zero. 

The proof of Theorem \ref{thm-regularity-of-tf} is based on the rightward propagation estimate of transition fronts and the analysis of the  growth and the  decay of $\frac{u(t,x+\eta)-u(t,x)}{\eta}$. The rightward propagation estimate reads as
\begin{equation}\label{right-propagation-intro}
X(t)-X(t_{0})\geq c_{1}(t-t_{0}-T_{1}),\quad t\geq t_{0}
\end{equation}
for some $c_{1}>0$ and $T_{1}>0$, which is established in Theorem \ref{lem-propagation-estimate} (to show \eqref{right-propagation-intro},  we need $\int_{0}^{1}f_{B}(u)du>0$ and the existence of traveling waves of \eqref{bistable-tw-eqn-000000} with nonzero speed). To control the behavior of $\frac{u(t,x+\eta)-u(t,x)}{\eta}$ when $x$ is close to $\infty$, we need (H3). A key ingredient in proving Theorem \ref{thm-regularity-of-tf}, i.e., controlling the term $\frac{u(t,x+\eta)-u(t,x)}{\eta}$, is the observation that for fixed $x$, the term $\frac{u(t,x+\eta)-u(t,x)}{\eta}$ can only grow for a period of time that is independent of $x$. Moreover, since we directly study transition fronts, which may not come from approximating solutions, we are lack of a priori information, which immediately causes the possible blow-up behavior of $\frac{u(t_{0},x+\eta)-u(t_{0},x)}{\eta}$ as $\eta\to0$ at the initial time $t_{0}$. To overcome this technical difficulty, we utilize the fact that transition fronts are global-in-time, which means we can take $t_{0}$ to approach $-\infty$ along subsequences.

Clearly, (H3) rules out many monostable nonlinearities, and it does not cover all Fisher-KPP nonlinearities. Our next result is trying to cover the monostable nonlinearities at the cost of putting some restrictions on transition fronts. We prove

\begin{thm}\label{thm-regularity-of-tf-ii}
Suppose (H1) and (H2). Suppose, in addition, that
\begin{equation*}
\sup_{(t,x,u)\in\R\times\R\times[0,1]}|f_{xu}(t,x,u)|<\infty\quad\text{and}\quad\sup_{(t,x,u)\in\R\times\R\times[0,1]}|f_{uu}(t,x,u)|<\infty.
\end{equation*}
Let $u(t,x)$ be an arbitrary transition front of \eqref{main-eqn} satisfying
\begin{equation}\label{harnack-type-intro}
u(t,x)\leq Ce^{r|x-y|}u(t,y),\quad (t,x,y)\in\R\times\R\times\R
\end{equation}
for some $C>0$ and $r>0$. Then, $u(t,x)$ is regular in space in the following sense:  for any $t\in\R$, $u(t,x)$ is continuously differentiable in $x$ and satisfies $\sup_{(t,x)\in\R\times\R}\frac{|u_{x}(t,x)|}{u(t,x)}<\infty$.
\end{thm}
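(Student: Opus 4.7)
The plan is to adapt the proof of Theorem \ref{thm-regularity-of-tf}, with the Harnack-type inequality \eqref{harnack-type-intro} taking over the role played by (H3). I would fix $\eta>0$ and set
\begin{equation*}
P_\eta(t,x)=\frac{u(t,x+\eta)-u(t,x)}{\eta},\qquad Q_\eta(t,x)=\frac{P_\eta(t,x)}{u(t,x)}.
\end{equation*}
The aim is to prove $\sup_{t,x,\eta}|Q_\eta(t,x)|<\infty$; then passing $\eta\to 0$ yields $|u_x|\le Ku$, and continuity of $u_x$ follows from the equation.

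Subtracting the equations for $u(t,x+\eta)$ and $u(t,x)$ and applying the mean value theorem in $u$ and in $x$ produces
\begin{equation*}
\pa_t P_\eta = J\ast P_\eta - P_\eta + a_\eta(t,x) P_\eta + b_\eta(t,x),
\end{equation*}
where $a_\eta(t,x)=f_u(t,x+\eta,\xi)$ and $b_\eta(t,x)=f_x(t,\zeta,u(t,x))$ are bounded by (H2). Since $f(t,x,0)\equiv 0$ gives $f_x(t,x,0)\equiv 0$, the newly assumed bound on $f_{xu}$ yields $|b_\eta(t,x)|\le\|f_{xu}\|_\infty u(t,x)$. A direct computation dividing by $u$ then produces
\begin{equation*}
\pa_t Q_\eta(t,x) = \int_{\R} J(y)[Q_\eta(t,x-y)-Q_\eta(t,x)]\frac{u(t,x-y)}{u(t,x)}\,dy + \Big(a_\eta(t,x)-\frac{f(t,x,u(t,x))}{u(t,x)}\Big) Q_\eta(t,x) + \frac{b_\eta(t,x)}{u(t,x)}.
\end{equation*}
Here \eqref{harnack-type-intro} supplies $u(t,x-y)/u(t,x)\le Ce^{r|y|}$, and by (H1) the weighted kernel $J(y)u(t,x-y)/u(t,x)$ has integral uniformly bounded by $C\int_\R J(y)e^{r|y|}\,dy<\infty$. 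Writing $f(t,x,u)/u=f_u(t,x,\xi')$ with $\xi'\in(0,u)$ and using the new bound on $f_{uu}$ shows that $a_\eta - f/u$ is uniformly bounded, while $|b_\eta/u|\le\|f_{xu}\|_\infty$. These estimates would convert the $Q_\eta$ equation into a Gronwall-type inequality for $t\mapsto \sup_x|Q_\eta(t,\cdot)|$.

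With this inequality in hand, I would follow the scheme of Theorem \ref{thm-regularity-of-tf} to get uniformity. At any finite initial time $t_0$, \eqref{harnack-type-intro} yields an a priori bound $|Q_\eta(t_0,x)|\le \widetilde C(\eta)$ depending on $\eta$ but not on $x$. The rightward propagation estimate (Theorem \ref{lem-propagation-estimate}) provides a time window whose length is independent of $\eta$, on which the Gronwall factor stays controlled. Sending $t_0\to-\infty$ along a subsequence---exploiting the global-in-time nature of transition fronts, exactly as in Theorem \ref{thm-regularity-of-tf}---eliminates the dependence on $\widetilde C(\eta)$ and produces a bound on $Q_\eta$ uniform in $(t,x,\eta)$. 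The main obstacle I anticipate is precisely the nonlocal term in the $Q_\eta$ equation: the difference $Q_\eta(t,x-y)-Q_\eta(t,x)$ mixes values at distinct points, and closing a supremum estimate relies essentially on the Harnack-type control of the weight $u(t,x-y)/u(t,x)$. Without \eqref{harnack-type-intro} there is no substitute for the tail decay afforded by (H3), so the theorem is essentially sharp with respect to this structural assumption.
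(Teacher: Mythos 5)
Your setup and the derivation of the equation for $Q_\eta$ are correct, and the overall strategy (bound $Q_\eta$ uniformly in $(t,x,\eta)$, then pass to the limit $\eta\to0$) matches the paper's. The gap is in how you propose to handle the nonlocal term. You keep $\int_\R J(y)[Q_\eta(t,x-y)-Q_\eta(t,x)]\frac{u(t,x-y)}{u(t,x)}\,dy$ as an operator acting on the unknown and plan to close a Gronwall inequality for $t\mapsto\sup_x|Q_\eta(t,\cdot)|$. But the Harnack weight only gives $\frac{u(t,x-y)}{u(t,x)}\le Ce^{r|y|}$, so the best you can extract is $\frac{d}{dt}\sup_x|Q_\eta|\le A\sup_x|Q_\eta|+B$ with $A\ge0$: even after isolating the dissipative part $-Q_\eta(t,x)\frac{J\ast u}{u}\le -C_3Q_\eta(t,x)$ with $C_3=\frac{1}{C}\int_\R J(y)e^{-r|y|}dy$, the remaining piece $\int_\R J(y)Q_\eta(t,x-y)\frac{u(t,x-y)}{u(t,x)}dy$ is only controlled by $C_4\sup_x|Q_\eta|$ with $C_4=C\int_\R J(y)e^{r|y|}dy\ge C_3$, so the net Gronwall rate is nonnegative everywhere and for all time. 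Since the initial datum $\sup_x|Q_\eta(t_0,\cdot)|$ is of order $1/\eta$ and must be eliminated by sending $t_0\to-\infty$, a Gronwall bound that grows in $t-t_0$ cannot yield anything uniform in $\eta$. The ``bounded growth window $[t_{\rm first}(x),t_{\rm last}(x)]$'' mechanism of Theorem \ref{thm-regularity-of-tf} relies on the equation decoupling into independent scalar ODEs indexed by $x$, and that structure is destroyed once the nonlocal term couples the sup norm to itself.

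The missing idea --- and the one the paper uses --- is to move the difference quotient off $u$ and onto the kernel:
\begin{equation*}
[J\ast P_\eta](x)=\int_\R\frac{J(x-y+\eta)-J(x-y)}{\eta}\,u(t,y)\,dy,
\end{equation*}
so that, by $J\in C^1$, the moment conditions in (H1) and the Harnack inequality \eqref{harnack-type-intro},
\begin{equation*}
\bigg|\frac{J\ast P_\eta}{u}\bigg|\le C\int_\R\bigg|\frac{J(z+\eta)-J(z)}{\eta}\bigg|e^{r|z|}\,dz,
\end{equation*}
a constant independent of $\eta$, $t$, $x$, and of $Q_\eta$ itself. The nonlocal term thereby becomes part of a uniformly bounded inhomogeneity, and for each fixed $x$ the equation for $Q_\eta$ reduces to the decoupled scalar ODE $w^\eta_t=a_1^\eta+a_2^\eta w^\eta$ with $a_2^\eta=-\frac{J\ast u}{u}+a^\eta-\frac{f}{u}$. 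One then still needs (and your proposal does not address) the sign structure of $a_2^\eta$: the lower Harnack bound gives $-\frac{J\ast u}{u}\le-C_3<0$, the $f_{uu}$ bound makes $|a^\eta-\frac{f}{u}|\le\frac{C_3}{2}$ ahead of the front where $u$ is small, and (H2) gives $a^\eta\le0$ behind it, so that $a_2^\eta$ is strictly negative outside a time window of uniformly bounded length; only with this in hand does the variation-of-constants argument of Theorem \ref{thm-regularity-of-tf}, with $t_{\rm first}(x)-t_0=1/|\eta|$, go through.
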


The key assumption in Theorem \ref{thm-regularity-of-tf-ii} is the Harnack-type inequality \eqref{harnack-type-intro}, which is the case for some transition fronts in the Fisher-KPP case (see \cite{LiZl14,ShSh14-kpp}). The importance of \eqref{harnack-type-intro} lies in the fact that it allows the comparison of $J\ast u$ and $J'\ast u$ with $u$. More precisely, by \eqref{harnack-type-intro}, we find $\frac{1}{C}\int_{\R}J(x)e^{-r|x|}dx\leq\frac{J\ast u}{u}\leq C\int_{\R}J(x)e^{r|x|}dx$ and similarly for $J'\ast u$, which plays crucial roles in controlling $\frac{u(t,x+\eta)-u(t,x)}{\eta u(t,x)}$.

The next result gives space regularity of transition fronts under the exact decay assumption.

\begin{thm}\label{thm-regularity-of-tf-iii}
Suppose (H1) and (H2). Suppose, in addition, that
\begin{equation*}
\sup_{(t,x,u)\in\R\times\R\times[0,1]}|f_{xu}(t,x,u)|<\infty\quad\text{and}\quad\sup_{(t,x,u)\in\R\times\R\times[0,1]}|f_{uu}(t,x,u)|<\infty.
\end{equation*}
Let $u(t,x)$ be an arbitrary transition front of \eqref{main-eqn} with interface location function $X(t)$. There exists $r_{0}>0$ such that if 
\begin{equation}\label{condition-exact-decay-intro}
\lim_{x\to\infty}\frac{u(t,x+X(t))}{e^{-rx}}=1\quad\text{uniformly in}\quad t\in\R
\end{equation}
for some  $r\in(0,r_{0}]$,
then, $u(t,x)$ is regular in space in the following sense:  for any $t\in\R$, $u(t,x)$ is continuously differentiable in $x$ and satisfies $\sup_{(t,x)\in\R\times\R}\frac{|u_{x}(t,x)|}{u(t,x)}<\infty$.
\end{thm}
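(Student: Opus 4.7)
My plan is to mirror the proof of Theorem \ref{thm-regularity-of-tf-ii}, using the exact decay assumption \eqref{condition-exact-decay-intro} in place of the Harnack-type inequality \eqref{harnack-type-intro}. As in that proof, I would work with the difference quotient $\phi_{\eta}(t,x):=(u(t,x+\eta)-u(t,x))/\eta$, which by \eqref{main-eqn} and the mean value theorem satisfies
\[
\partial_{t}\phi_{\eta}=J\ast\phi_{\eta}-\phi_{\eta}+a_{\eta}(t,x)\,\phi_{\eta}+b_{\eta}(t,x),
\]
with coefficients $a_{\eta},b_{\eta}$ bounded uniformly in $\eta$ thanks to (H2) and the additional hypotheses on $f_{xu}$ and $f_{uu}$. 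Bounding $\phi_{\eta}$ absolutely leads to exponential-in-time growth, so the right quantity to track is the ratio $\psi_{\eta}:=\phi_{\eta}/u$. Deriving its evolution equation, the delicate terms are $(J\ast\phi_{\eta})/u$ and $\psi_{\eta}\cdot(J\ast u)/u$; to estimate them one needs uniform bounds on the ratios $(J\ast u)/u$ and $(J'\ast u)/u$ in $(t,x)$.

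This is precisely where \eqref{harnack-type-intro} is used in Theorem \ref{thm-regularity-of-tf-ii}, and what I would replace by \eqref{condition-exact-decay-intro}. For $x-X(t)\to\infty$, substituting the asymptotic $u(t,x)\sim e^{-r(x-X(t))}$ into the convolutions gives
\[
\frac{(J\ast u)(t,x)}{u(t,x)}\longrightarrow\int_{\R}J(y)e^{ry}dy,\qquad\frac{(J'\ast u)(t,x)}{u(t,x)}\longrightarrow\int_{\R}J'(y)e^{ry}dy,
\]
uniformly in $t$, both finite by (H1). For $x-X(t)\leq -M$ with $M$ large, the transition front definition gives $u(t,x)\geq 1/2$ and both ratios are trivially bounded. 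The smallness requirement $r\leq r_{0}$ enters so that these exponential moments, together with the supersolution construction below, combine compatibly; a natural choice would tie $r_{0}$ to the decay rate of the linearization of \eqref{main-eqn} at $u=0$.

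The genuine obstacle is the intermediate region $\{|x-X(t)|\leq M\}$, where neither \eqref{condition-exact-decay-intro} nor the definition of transition front yields a positive lower bound on $u$. I would close this gap by exploiting the nonlocal operator itself: since $u(t,y)\geq 1/2$ for all $y\leq X(t)-M$ and $J\not\equiv 0$ is nonnegative, $(J\ast u)(t,x)$ is uniformly bounded below on the intermediate window. Inserting this into \eqref{main-eqn} and comparing with a suitable travelling subsolution moving at speed at most $c_{1}$---as produced by the rightward propagation estimate \eqref{right-propagation-intro}---should yield a uniform lower bound $u(t,x)\geq c_{0}>0$ on $\{|x-X(t)|\leq M\}$, after which the Harnack-type ratios are bounded on this window as well.

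With uniform bounds on $(J\ast u)/u$ and $(J'\ast u)/u$ secured, the rest of the argument parallels Theorem \ref{thm-regularity-of-tf-ii}: initialize $\psi_{\eta}$ at some time $t_{0}$, construct a supersolution uniform in $\eta$, and use \eqref{right-propagation-intro} to let $t_{0}\to-\infty$ along subsequences so as to absorb any initial blow-up of $\psi_{\eta}(t_{0},\cdot)$ as $\eta\to 0$. Passing $\eta\to 0$ in the resulting inequality $|\psi_{\eta}|\leq C$ yields the continuous differentiability of $u$ in $x$ together with $\sup|u_{x}|/u<\infty$. The principal difficulty I expect is the uniform lower bound for $u$ on the intermediate window, since this quantitative information is not provided by \eqref{condition-exact-decay-intro} and must be extracted from the equation itself.
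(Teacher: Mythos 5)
Your overall architecture is the same as the paper's: reduce the problem to uniform bounds on $(J\ast u)/u$ and $(J'\ast u)/u$, obtain them near $x=+\infty$ from the exact decay \eqref{condition-exact-decay-intro} (this is what the paper's Lemmas \ref{lem-090-1} and \ref{lem-090-2} do, with $r_{0}$ fixed there), obtain them on $\{x\leq X(t)-M\}$ from $u\geq 1/2$, and then rerun the $w^{\eta}=v^{\eta}/u$ machinery of Theorem \ref{thm-regularity-of-tf-ii}. You also correctly isolate the crux: a uniform positive lower bound on $u$ on the intermediate window, which is exactly the paper's Lemma \ref{lem-away-from-0} ($\inf_{t\in\R}\inf_{x\leq L+X(t)}u(t,x)>0$ for every $L>0$).

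However, your proposed proof of that lower bound has a genuine gap. First, the claim that $(J\ast u)(t,x)\geq\de>0$ on $\{|x-X(t)|\leq M\}$ because $u\geq 1/2$ to the left of $X(t)-M$ fails when $J$ is compactly supported: for $x$ near $X(t)+M$ one only gets $(J\ast u)(x)\geq\frac12\int_{2M}^{\infty}J(z)\,dz$, which can vanish, and (H1) permits compactly supported kernels. Second, and more seriously, comparison with a subsolution travelling at speed $c_{1}$ does not close the argument: \eqref{right-propagation-intro} is a \emph{lower} bound on the displacement of $X$, so a subsolution planted at $X(t_{0})$ and moving at speed $c_{1}$ may lag arbitrarily far behind the window $[X(t)-M,X(t)+M]$ if the actual front happens to move faster. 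What is needed is an \emph{upper} bound on $X(t)-X(t-T)$ over the (short) comparison time $T$, and no such bound is assumed in Theorem \ref{thm-regularity-of-tf-iii} (the hypothesis \eqref{upper-avg-speed} belongs to Theorem \ref{thm-modified-interface-location}, not here). The paper extracts it in Lemma \ref{lem-away-from-0} by sandwiching $u$ between a bistable subsolution $u_{B}(T,\cdot-X^{-}_{\la_{1}}(t-T);u_{0}^{B})$ and a monostable supersolution $u_{M}(T,\cdot-X^{+}_{\la_{2}}(t-T);u_{0}^{M})$ with $u_{0}^{M}\equiv\la_{2}>0$ near $+\infty$; the supersolution controls $X^{+}_{1/2}(t)$ in terms of $X^{+}_{\la_{2}}(t-T)$, which yields $X^{-}_{\la_{1}}(t-T)\geq X(t)-C$, and only then does the subsolution give the positive lower bound up to $L+X(t)$. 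Without this two-sided short-time comparison, the step you yourself flag as the principal difficulty remains open in your proposal.
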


We remark that the exact decay assumption \eqref{condition-exact-decay-intro} is the case for some transition fronts in the Fisher-KPP case (see \cite{LiZl14,ShSh14-kpp}). The importance of \eqref{condition-exact-decay-intro} lies in the fact that it allows the comparison of $J\ast u$ and $J'\ast u$ with $u$ near $x=\infty$. Note that if the limit in \eqref{condition-exact-decay-intro} is some other positive number instead of $1$, then we only need to shift the exponential function on the bottom correspondingly. The number $r_{0}$ corresponds to the possible decay rate of Fisher-KPP transition fronts, and thus, it would be interesting to determine the optimal $r_{0}$.

The study of space regularity of transition fronts of \eqref{main-eqn} was initiated in \cite{ShSh14-3}, where the space regularity of well-constructed transition fronts in time heterogeneous media of ignition type is obtained. Those well-constructed transition fronts are uniformly Lipschitz continuous in space and their interface location functions can be chosen to be continuously differentiable with uniformly positive first order derivatives. These properties, which may not be true for an arbitrary transition front, play important roles in the study of space regularity of well-constructed transition fronts in \cite{ShSh14-3}. Our results, Theorem \ref{thm-regularity-of-tf}, Theorem \ref{thm-regularity-of-tf-ii} and Theorem \ref{thm-regularity-of-tf-iii}, generalize the work in \cite{ShSh14-3} to arbitrary transition fronts (with some additional assumptions in Theorem \ref{thm-regularity-of-tf-ii} and Theorem \ref{thm-regularity-of-tf-iii}) of \eqref{main-eqn} with a large class of nonlinearities. As already shown in \cite{ShSh14-3}, space regularity of transition fronts is of great significance in the study of stability, which together with uniqueness and asymptotic speeds, will be studied elsewhere.

Finally, we study the existence of continuously differentiable and increasing interface location functions. As mentioned before, if $X(t)$ is an interface location function of a transition front $u(t,x)$ of \eqref{main-eqn}, then for any bounded function $\xi(t)$, $X(t)+\xi(t)$ is also an interface location function of $u(t,x)$. Hence, interface location functions of a transition front are not unique and may not be continuous. But, near each interface location function, we are able to find a continuously differentiable and increasing function, as the new interface location function. This is given by the following

\begin{thm}\label{thm-modified-interface-location}
Suppose (H1) and (H2). Let $u(t,x)$ be an arbitrary transition front of \eqref{main-eqn} with interface location function $X(t)$ satisfying
\begin{equation}\label{upper-avg-speed}
X(t)-X(t_{0})\leq c_{2}(t-t_{0}+T_{2}),\quad t\geq t_{0}
\end{equation}
for some $c_{2}>0$ and $T_{2}>0$. Then, there are constants $0<\tilde{c}_{\min}\leq\tilde{c}_{\max}<\infty$ and a continuously differentiable function $\tilde{X}(t)$ satisfying
\begin{equation*}
\tilde{c}_{\min}\leq\dot{\tilde{X}}(t)\leq \tilde{c}_{\max},\quad t\in\R
\end{equation*}
such that
\begin{equation*}
\sup_{t\in\R}|\tilde{X}(t)-X(t)|<\infty.
\end{equation*}
In particular, $\tilde{X}(t)$ is also an interface location function of $u(t,x)$.
\end{thm}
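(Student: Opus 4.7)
The plan is to construct $\tilde X$ in three successive steps: replace $X$ by a measurable interface location with matching two-sided propagation bounds, then average on a large time scale to obtain a Lipschitz proxy with uniformly positive slope, and finally mollify to gain $C^1$ regularity. As a preliminary observation, combining the hypothesis \eqref{upper-avg-speed} with the rightward propagation estimate \eqref{right-propagation-intro} (which Theorem~\ref{lem-propagation-estimate} establishes under (H1)--(H2)) yields constants $0<c_1\leq c_2$ and $T_1,T_2>0$ with
\begin{equation*}
c_1(t-s)-c_1 T_1 \;\leq\; X(t)-X(s) \;\leq\; c_2(t-s)+c_2 T_2, \qquad s\leq t.
\end{equation*}

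Since Definition~\ref{transition-front-defn} imposes no regularity on $X$ whatsoever, it may fail to be measurable, which would block the averaging step. I produce a measurable interface location directly from $u$: because $u(t,x)\in(0,1)$ solves \eqref{main-eqn} with bounded right-hand side, the map $t\mapsto u(t,q)$ is Lipschitz for each fixed $q\in\R$. Set
\begin{equation*}
X^{\flat}(t) \;:=\; \inf\{q\in\Q:u(t,q)\leq 1/2\}.
\end{equation*}
Then $\{t:X^{\flat}(t)>a\}=\bigcap_{q\in\Q,\,q<a}\{t:u(t,q)>1/2\}$ is a countable intersection of open sets, so $X^{\flat}$ is Borel measurable. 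The uniform-in-$t$ limits in Definition~\ref{transition-front-defn} furnish $A>0$ with $u(t,y)<1/2$ for $y\geq X(t)+A$ and $u(t,y)>1/2$ for $y\leq X(t)-A$ for every $t$; hence $|X^{\flat}(t)-X(t)|\leq A+1$, so $X^{\flat}$ is itself an interface location and inherits the two-sided estimate above (with enlarged constants $T_1',T_2'$).

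Now choose $\tau>0$ large (specified below) and set
\begin{equation*}
Y(t) \;:=\; \frac{1}{2\tau}\int_{t-\tau}^{t+\tau}X^{\flat}(s)\,ds.
\end{equation*}
Since $X^{\flat}$ is measurable and locally bounded, $Y$ is locally absolutely continuous. For $t_1<t_2$, a change of variables gives $Y(t_2)-Y(t_1)=\frac{1}{2\tau}\int_0^{t_2-t_1}[X^{\flat}(t_1+\tau+u)-X^{\flat}(t_1-\tau+u)]\,du$; applying the two-sided estimate to each integrand (time difference $2\tau$) yields
\begin{equation*}
\Big(c_1-\tfrac{c_1 T_1'}{2\tau}\Big)(t_2-t_1) \;\leq\; Y(t_2)-Y(t_1) \;\leq\; \Big(c_2+\tfrac{c_2 T_2'}{2\tau}\Big)(t_2-t_1).
\end{equation*}
Choosing $\tau\geq 2\max(T_1',T_2')$ makes $Y$ globally Lipschitz with $Y'(t)\in[c_1/2,\,2c_2]$ for a.e.\ $t$. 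A similar estimate, averaged over $[t-\tau,t+\tau]$, gives $|Y(t)-X^{\flat}(t)|\leq c_2\tau+c_2 T_2'$, a fixed constant.

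Finally, take $\rho\in C_c^{\infty}(\R)$ with $\rho\geq 0$, $\supp\rho\subset[-1,1]$, $\int_{\R}\rho=1$, and set $\tilde X:=Y*\rho$. Then $\tilde X\in C^{\infty}(\R)$, $\dot{\tilde X}=Y'*\rho$ lies pointwise in $[c_1/2,\,2c_2]$ (take $\tilde c_{\min}=c_1/2$ and $\tilde c_{\max}=2c_2$), and $|\tilde X(t)-Y(t)|\leq\|Y'\|_{\infty}$. Chaining the bounds $|\tilde X-Y|$, $|Y-X^{\flat}|$ and $|X^{\flat}-X|$ shows $|\tilde X(t)-X(t)|$ is uniformly bounded in $t$, so $\tilde X$ is itself an interface location of $u$ with all the required properties. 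The main technical difficulty is the complete absence of any a priori regularity---or even measurability---of the given $X$; the construction of $X^{\flat}$ circumvents this by leveraging $t$-Lipschitz continuity of $u(\cdot,q)$ for each rational $q$, a consequence of the equation \eqref{main-eqn} that persists despite the paper's standing assumption that $u$ may fail to be regular in $x$.
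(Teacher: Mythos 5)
Your proof is correct, but it takes a genuinely different route from the paper's. The paper argues by an explicit hitting-time construction: starting from $X(t_0)+C_0$ it follows lines of slope $c_1/2$, waits for $X$ to catch up (the lower bound of Theorem \ref{lem-propagation-estimate} guarantees this happens within a uniformly bounded time, while \eqref{upper-avg-speed} bounds the overshoot at each hitting moment), concatenates the resulting linear pieces into a piecewise-linear majorant at bounded distance from $X$, smooths the corners by an explicit $C^2$ interpolation, and finally sends the starting time $t_0\to-\infty$ via Arzel\`a--Ascoli; a preliminary pass of the same construction first replaces $X$ by a continuous interface location. You instead regularize by convolution: a measurable interface location $X^{\flat}$ extracted directly from $u$ (needed because the sliding average of a possibly non-measurable $X$ is undefined, whereas the paper's hitting times, being infima of arbitrary sets, sidestep measurability altogether), then a sliding average over a window of length $2\tau$ large compared with $T_1',T_2'$, which converts the two-sided linear-growth bounds into genuine two-sided Lipschitz bounds with uniformly positive lower slope, and finally a mollification. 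Your route is shorter, yields $\tilde X\in C^{\infty}$ rather than merely $C^1$, and makes the dependence of $\tilde c_{\min},\tilde c_{\max}$ on $c_1,c_2$ transparent; the paper's route produces an explicit majorant of $X$ and needs no measurability discussion. The individual steps all check out: $t$-Lipschitz continuity of $u(\cdot,q)$ does follow from boundedness of the right-hand side of \eqref{main-eqn}, the two-sided estimate is exactly Theorem \ref{lem-propagation-estimate} combined with \eqref{upper-avg-speed}, and the difference-quotient computation for $Y$ is correct. (One cosmetic slip: the displayed intersection over rational $q<a$ characterizes $\{t:X^{\flat}(t)\geq a\}$ rather than $\{t:X^{\flat}(t)>a\}$, since it only forces $\inf\{q\in\Q: u(t,q)\leq 1/2\}\geq a$; Borel measurability of $X^{\flat}$ follows all the same.)
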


We see that (H1) and (H2) do not ensure the space regularity of  transition fronts. It will be clear later that the proof of Theorem \ref{thm-modified-interface-location} does not need the space regularity of transition fronts.

We refer to $\tilde{X}(t)$ in Theorem \ref{thm-modified-interface-location} as the modified interface location function, which gives a better characterization of the propagation of transition fronts and has been verified to be of great technical importance in studying the stability of transition fronts in time heterogeneous media (see e.g. \cite{ShSh14-1,ShSh14-2,ShSh14-3}). Moreover, for reaction-diffusion equations in space heterogeneous media, the rightmost interface location at some constant value continuously moves to the right (see e.g. \cite{MeRoSi10,MNRR09,NoRy09,Zl13}). But for nonlocal equations in space heterogeneous media, the rightmost interface location at some constant value jumps in general due to the nonlocality, which makes the modified interface location function more important.

The condition \eqref{upper-avg-speed} is a technical assumption saying a transition front moves to the right at most at linear speed in the average sense, which together with \eqref{right-propagation-intro} allow us to find the modified interface location function. Arguing as in the proof of Theorem \ref{lem-propagation-estimate}, we readily check that the condition \eqref{upper-avg-speed} is always true in the bistable case. This condition can also be verified in other cases including the monostable case and the ignition case as in the following two corollaries.

\begin{cor}\label{cor-modified-interface}
Suppose (H1) and (H2). Let $u(t,x)$ be an arbitrary transition front of \eqref{main-eqn}. If there exists $\tilde{\theta}\in(0,\theta)$ such that
\begin{equation*}
f(t,x,u)\leq0,\quad(t,x,u)\in\R\times\R\times[0,\tilde{\theta}],
\end{equation*}
then \eqref{upper-avg-speed} is true. In particular, the conclusions of Theorem \ref{thm-modified-interface-location} hold.
\end{cor}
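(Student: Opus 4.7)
The plan is to establish \eqref{upper-avg-speed} by constructing a supersolution built from a traveling wave of an ignition-type equation that dominates $f$. The additional hypothesis $f(t,x,u)\le 0$ for $u\in[0,\tilde\theta]$ is precisely what forces $f$ to be bounded above by a standard ignition nonlinearity, and this will yield the desired finite upper speed.

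First I would construct a $C^2$ standard ignition nonlinearity $g:[0,1]\to\R$ such that $g(u)\ge f(t,x,u)$ for all $(t,x,u)\in\R\times\R\times[0,1]$. Pick $\theta_g\in(0,\tilde\theta)$; define $g\equiv 0$ on $[0,\theta_g]$, smoothly dominate $f_M$ on $[\tilde\theta,1]$ with $g(1)=0$ and $g'(1)<0$, and arrange $g\ge 0$ on $[\theta_g,\tilde\theta]$. Since $f\le 0$ on $[0,\tilde\theta]$ and $f\le f_M$ on $[\tilde\theta,1]$, we have $g\ge f$ everywhere. By the classical theory of traveling waves for nonlocal ignition equations (cf.\ \cite{BaCh06,CoDu05}), $v_t=J\ast v-v+g(v)$ admits a unique (up to translation) traveling wave $\phi_g(x-c_g t)$ with $\phi_g(-\infty)=1$, $\phi_g(+\infty)=0$, $c_g>0$, $\phi_g$ strictly decreasing, and exponential decay at $+\infty$.

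The key comparison step is to show that for each $t_0\in\R$, there exists a shift $\xi_0$ (bounded uniformly in $t_0$) with
\begin{equation*}
\phi_g(x-X(t_0)-\xi_0)\ge u(t_0,x)\quad\text{for all }x\in\R.
\end{equation*}
By the bounded interface width and the uniform limits in Definition \ref{transition-front-defn}, $u(t_0,\cdot)$ transitions from near $1$ to below $\theta_g/4$ over an interval of bounded length around $X(t_0)$, so shifting $\phi_g$ far enough to the right places its transition region entirely to the right of that of $u(t_0,\cdot)$, providing domination on compact sets. The delicate part is the tail at $+\infty$: I must verify that the decay of $u(t_0,\cdot)$ is no slower than that of $\phi_g$. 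This is where I would exploit the global-in-time nature of the transition front (propagating decay information from times $t_0-T$ with $T$ large) together with the fact that $f\le 0$ on $[0,\tilde\theta]$ makes $u$ a subsolution of $w_t=J\ast w-w$ in the region $\{u\le\tilde\theta\}$, which yields exponential tail decay with rate comparable to that of $\phi_g$.

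Once this domination is in place, the comparison principle for $u_t=J\ast u-u+g(u)$ (valid since $g\ge f$) gives
\begin{equation*}
u(t,x)\le\phi_g(x-X(t_0)-\xi_0-c_g(t-t_0)),\qquad t\ge t_0,\ x\in\R.
\end{equation*}
Fix $\eta\in(\theta_g,1)$ and let $y_\eta$ be the unique point with $\phi_g(y_\eta)=\eta$. At any $(t,x)$ with $u(t,x)=\eta$ one then has $x\le X(t_0)+\xi_0+y_\eta+c_g(t-t_0)$, and since the level set $\{u(t,\cdot)=\eta\}$ lies within a bounded distance of $X(t)$ by the transition front property, this gives $X(t)-X(t_0)\le c_g(t-t_0)+C$ with $C$ independent of $t_0$, i.e., \eqref{upper-avg-speed}. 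The conclusions of Theorem \ref{thm-modified-interface-location} are then immediate. The main obstacle will be the tail estimate in the comparison step, since a priori a transition front may have arbitrary decay at $+\infty$; coupling the global-in-time structure with the ignition-type upper bound on $f$ to obtain an exponential tail bound comparable to that of $\phi_g$ is the technical heart of the argument.
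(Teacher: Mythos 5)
Your overall strategy---dominate $f$ by a standard ignition nonlinearity $g$, use a traveling wave of $v_t=J\ast v-v+g(v)$ as a supersolution, and read off the linear upper bound on $X(t)$---is the same as the paper's. But there is a genuine gap at exactly the point you identify as ``the technical heart'': the initial domination $u(t_0,\cdot)\le\phi_g(\cdot-X(t_0)-\xi_0)$ with $\phi_g(+\infty)=0$ requires the tail of $u(t_0,\cdot)$ at $+\infty$ to decay at least as fast as $\phi_g$, \emph{uniformly in $t_0$}, and your proposed justification does not deliver this. Knowing that $u$ is a subsolution of $w_t=J\ast w-w$ on the set $\{u\le\tilde\theta\}$ gives no pointwise spatial decay rate at a fixed time: a bounded subsolution of the pure dispersal equation can have arbitrarily slow spatial decay. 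The paper itself warns that this issue is real---it cites \cite{BaChm99} for bistable waves (where $f\le 0$ near $0$ holds) that do \emph{not} decay exponentially, and the whole point of Corollary \ref{cor-modified-interface-ii} is that in the monostable case one must \emph{assume} an exponential tail bound to get \eqref{upper-avg-speed}. Even if some exponential decay could be extracted, there is no reason its rate should beat the specific decay rate of $\phi_g$, which is dictated by $c_g$ and $J$. So as written, the comparison step cannot be closed.

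The paper's proof sidesteps the tail entirely with one trick you are missing: it fixes $\tilde\theta_I\in(0,\theta_I)$ and uses the traveling wave $\phi_I$ of the ignition equation with limits $\phi_I(-\infty)=1$ and $\phi_I(+\infty)=\tilde\theta_I>0$, i.e.\ a wave connecting $1$ to a \emph{positive} constant rather than to $0$. The initial datum $u_0$ for the supersolution is taken nonincreasing with $u_0\equiv 1$ on $(-\infty,0]$ and $u_0\equiv\tilde\theta_I$ on $[x_0,\infty)$; then $u(t_0,\cdot+X^+_{\tilde\theta_I}(t_0))\le u_0$ holds \emph{trivially} (to the right of $X^+_{\tilde\theta_I}(t_0)$ one has $u<\tilde\theta_I$ by definition of $X^+_{\tilde\theta_I}$, and everywhere $u<1$), with no tail analysis whatsoever. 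The comparison principle plus the stability of $\phi_I$ (Lemma \ref{lem-ignition-property}, applied after a shift) then give the linear upper bound for $t\ge t_0+T$, and a separate short-time argument handles $t\in[t_0,t_0+T]$. If you replace your $\phi_g$ decaying to $0$ by such a wave with positive right limit, your argument closes; without that modification, the tail estimate you defer is not merely technical but is the step most likely to be false for an arbitrary transition front under (H1)--(H2) alone.
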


Clearly, Corollary \ref{cor-modified-interface} covers, in particular, all bistable and ignition nonlinearities, but rules out all monostable nonlinearities, which are covered by the next result.

\begin{cor}\label{cor-modified-interface-ii}
Suppose (H1) and (H2). Suppose, in addition, that $J$ is compactly supported. Let $u(t,x)$ be an arbitrary transition front of \eqref{main-eqn} with interface location function $X(t)$. If there exist $r>0$ and $h>0$ such that
\begin{equation*}
u(t,x+X(t))\leq e^{-r(x-h)},\quad (t,x)\in\R\times\R,
\end{equation*}
then \eqref{upper-avg-speed} is true. In particular, the conclusions of Theorem \ref{thm-modified-interface-location} hold.
\end{cor}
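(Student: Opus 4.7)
The plan is to derive the linear upper bound \eqref{upper-avg-speed} by constructing an exponentially decaying supersolution whose front advances at a controlled finite speed, and then to invoke Theorem \ref{thm-modified-interface-location} as a black box. The compact support of $J$ (together with (H1)) guarantees that the exponential moment
\[
K(r) := \int_{\R} J(y) e^{ry}\,dy
\]
is finite for the given $r$, and since $f_M$ is $C^{2}$ on $[0,1]$ with $f_M(0)=0$, the constant $\tilde M := \sup_{u\in(0,1]} f_M(u)/u$ is also finite. Fix $t_0 \in \R$, set $c := \max\{1,\,(K(r)+\tilde M-1)/r\} > 0$, and define the comparison function
\[
\bar u(t,x) := \min\bigl\{1,\ e^{-r(x - X(t_0) - h - c(t-t_0))}\bigr\},\qquad t \geq t_0,\ x\in\R.
\]

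The first step is to verify that $\bar u$ is a supersolution of the dominating equation $v_t = J\ast v - v + f_M(v)$ (hence of \eqref{main-eqn}, since $f\leq f_M$). On the region $\{\bar u = 1\}$, $\bar u_t = 0$, $f_M(1)=0$, and $J\ast\bar u \leq \int J = 1$, so the supersolution inequality reduces to $0 \geq J\ast\bar u - 1$, which is trivial. On the region $\{\bar u < 1\}$, where $\bar u$ coincides with the pure exponential, the pointwise bound $\bar u(t,x-y) \leq e^{ry}\bar u(t,x)$ gives $J\ast\bar u \leq K(r)\bar u$; combined with $\bar u_t = rc\,\bar u$ and $f_M(\bar u)\leq \tilde M\bar u$, the choice of $c$ ensures the inequality. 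At $t=t_0$, the hypothesis $u(t_0, x+X(t_0)) \leq e^{-r(x-h)}$ together with $u \leq 1$ yields $u(t_0,\cdot)\leq\bar u(t_0,\cdot)$, so the comparison principle gives $u(t,x)\leq\bar u(t,x)$ for all $t \geq t_0$, $x\in\R$.

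To convert this into a bound on $X(t)$, use the uniform limit $\lim_{x\to-\infty}u(t,x+X(t)) = 1$ from Definition \ref{transition-front-defn} to fix $L>0$ independent of $t$ with $u(t, X(t)-L) \geq 1/2$ for all $t$. Combining with $u \leq \bar u \leq e^{-r(x-X(t_0)-h-c(t-t_0))}$ evaluated at $x = X(t)-L$ gives
\[
\tfrac{1}{2} \leq e^{-r(X(t) - L - X(t_0) - h - c(t-t_0))},
\]
which rearranges to $X(t) - X(t_0) \leq c(t-t_0) + L + h + (\log 2)/r$ for all $t \geq t_0$. Setting $c_2 := c$ and $T_2 := (L + h + (\log 2)/r)/c_2$ yields \eqref{upper-avg-speed}; the constants manifestly depend only on $r, h, L, J$, and $f_M$, so the bound holds uniformly in $t_0$. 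Theorem \ref{thm-modified-interface-location} then produces the claimed modified interface location function.

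The main technical subtlety is the piecewise smoothness of $\bar u$ at the transition curve $\{x = X(t_0)+h+c(t-t_0)\}$, where $\partial_t \bar u$ has a downward jump. This is handled by a standard regularization, e.g.\ replacing $\bar u$ by $\min\{1+\epsilon,\ e^{-r(x - X(t_0) - h - c(t-t_0))}+\epsilon\}$, verifying the strict supersolution inequality on each piece, applying comparison for each $\epsilon>0$, and passing $\epsilon\to 0$; alternatively, one observes that the supersolution inequality is trivial on the ``1''-piece and verified above on the ``exponential''-piece, and invokes a version of the comparison principle for nonlocal equations adapted to continuous piecewise-$C^1$ supersolutions. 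Beyond this point, the argument is a direct application of the comparison principle and the bounded-interface-width property of transition fronts.
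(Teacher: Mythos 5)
Your proof is correct, but it takes a genuinely different and more self-contained route than the paper's. The paper dominates $f$ by a $C^{2}$ Fisher--KPP nonlinearity $f_{\rm KPP}$, reduces $r$ (without loss of generality) to an admissible decay rate of a KPP traveling wave $\phi_{r}(x-c_{r}t)$ with $c_{r}=\frac{\int_{\R}J(y)e^{ry}dy-1+f_{\rm KPP}'(0)}{r}$, and then invokes the existence, exact decay, and asymptotic stability of these waves (results of \cite{CaCh04} and \cite{ShZh12-2}, which are stated for compactly supported $J$ --- this is the only reason the corollary assumes compact support) to trap $u$ below a profile advancing at speed $c_{r}$, finishing as in the proof of Corollary \ref{cor-modified-interface}. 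You instead build the explicit supersolution $\min\{1,e^{-r(x-X(t_{0})-h-c(t-t_{0}))}\}$ with $c\ge\frac{K(r)+\tilde M-1}{r}$, which is essentially the same KPP-type speed formula with $f_{\rm KPP}'(0)$ replaced by $\sup_{u}f_{M}(u)/u$, and apply the comparison principle directly. Your verification is sound: $J\ast\bar u\le K(r)\bar u$ on the exponential piece via $\bar u(t,x-y)\le e^{ry}\bar u(t,x)$, the inequality on the plateau is trivial since $J\ast\bar u\le 1$, the kink in $t$ is a concave one (left derivative $rc>0$, right derivative $0$, both dominating the right-hand side), so the comparison principle for Lipschitz-in-$t$ supersolutions applies, and the conversion to a bound on $X(t)$ via $u(t,X(t)-L)\ge\frac12$ and Lemma \ref{lem-bounded-interface-width} is exactly what is needed for \eqref{upper-avg-speed}. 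What your approach buys: it needs only the finiteness of $\int_{\R}J(y)e^{ry}dy$, which (H1) already guarantees, so the compact-support hypothesis and the reduction of $r$ become superfluous, and the constants $c_{2},T_{2}$ are fully explicit; what it costs is only the routine regularization (or Dini-derivative) argument at the interface of the two pieces, which you correctly flag and which is standard for monotone nonlocal operators.
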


We remark that in the monostable case, uniform exponential decay as $x\to\infty$ may be necessary for transition fronts to travel at linear speeds, since slower decay near $x=\infty$ may cause super-linear propagation (see \cite{HaRo10}). It is worthwhile to point out that in the bistable case, a discontinuous traveling wave may not converge to zero exponentially as $x\to\infty$ (see \cite[Theorem 5.1]{BaChm99}). Here, we need $J$ to be compactly supported, since we will use results obtained in \cite{CaCh04} and \cite{ShZh12-2} in the proof, which were proven when $J$ is compactly supported. It should be pointed out that the arguments and results in
\cite{ShZh12-2} can be extended to dispersal kernels $J$ which are not compactly supported, but satisfy (H1) (see \cite{AZh}).

As a direct application of the results, in particular, Theorem \ref{thm-regularity-of-tf} and Corollary \ref{cor-modified-interface}, obtained in the present paper, we study in \cite{ShSh14-bistable} the equation \eqref{main-eqn} in time heterogeneous media of general bistable type, that is, $f(t,x,u)=f(t,u)$ satisfies
\begin{equation*}
f_{\min}(u)\leq f(t,u)\leq f_{\max}(u),\quad (t,u)\in\R\times[0,1],
\end{equation*}
where the $C^{2}$ functions $f_{\min}$ and $f_{\max}$ are standard bistable nonlinearities on $[0,1]$ with the unbalanced condition $\int_{0}^{1}f_{\min}(u)du>0$. Provided that there is a space nonincreasing transition front, we show by means of Theorem \ref{thm-regularity-of-tf} and Corollary \ref{cor-modified-interface} that (i) all transition fronts are asymptotically stable and enjoy decaying estimates; (ii) transition fronts are unique up to space shifts; (iii) all transition fronts become periodic traveling waves in the periodic media and have asymptotic speeds in the uniquely ergodic media. The assumption on the existence of a space nonincreasing transition front can be verified if, for example, $f(t,u)$ is of standard bistable type in $u$ for each $t$ and their middle zeros are the same.

The rest of the paper is organized as follows. In Section \ref{sec-prop-estimate}, we establish the rightward propagation estimate of transition fronts. We will see, in particular, that any transition fronts moves front left infinity to right infinity as time goes from $-\infty$ to $\infty$. In Section \ref{subsec-unconditional-regularity}, we prove Theorem \ref{thm-regularity-of-tf}. In Section \ref{sec-proof-2}, we prove Theorem \ref{thm-regularity-of-tf-ii}. In Section \ref{sec-proof-iafjiaf}, we prove Theorem \ref{thm-regularity-of-tf-iii}. In Section \ref{sec-proof-thm-cor}, we prove Theorem \ref{thm-modified-interface-location}, Corollary \ref{cor-modified-interface} and Corollary \ref{cor-modified-interface-ii}.  We end up the present paper with an appendix, Appendix \ref{app-ig-tw}, on ignition traveling waves.


\section{Rightward propagation estimates}\label{sec-prop-estimate}

In this section, we study the rightward propagation estimates of transition fronts. Throughout this section, we assume (H1) and (H2).

In what follows in this section, $u(t,x)$ will be an arbitrary transition front of \eqref{main-eqn} with interface location function $X(t)$. For $\la\in(0,1)$, we define $X_{\la}^{-}(t)$ and $X_{\la}^{+}(t)$ by setting
\begin{equation}\label{defn-interface-locations}
\begin{split}
X_{\la}^{-}(t)&=\sup\{x\in\R|u(t,y)>\la,\,\,\forall y\leq x\},\\
 X_{\la}^{+}(t)&=\inf\{x\in\R|u(t,y)<\la,\,\,\forall y\geq x\}.
\end{split}
\end{equation}
Note that if $u(t,x)$ is continuous in $x$, then $X_{\la}^{-}(t)$ and $X_{\la}^{+}(t)$ are nothing but the leftmost and rightmost interface locations at $\la$. Trivially, $X_{\la}^{-}(t)\leq X_{\la}^{+}(t)$ and $X_{\la}^{\pm}(t)$ are decreasing in $\la$. Due to the possible discontinuity of $u(t,x)$ in $x$, it may happen that $u(t,X_{\la}^{-}(t))<\la$ or $u(t,X_{\la}^{+}(t))>\la$.

From the definition of transition fronts, we have the following simple lemma.

\begin{lem}\label{lem-bounded-interface-width}
The following statements hold:
\begin{itemize}
\item[\rm(i)] for any $0<\la_{1}\leq\la_{2}<1$, there holds $\sup_{t\in\R}[X^{+}_{\la_{1}}(t)-X^{-}_{\la_{2}}(t)]<\infty$;

\item[\rm(ii)] for any $\la\in(0,1)$, there hold $\sup_{t\in\R}|X(t)-X^{\pm}_{\la}(t)|<\infty$.
\end{itemize}
\end{lem}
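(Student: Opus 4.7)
The plan is to unpack the definition of a transition front directly; both parts are quantitative expressions of the uniform-in-$t$ limits $u(t,x+X(t))\to 1$ as $x\to-\infty$ and $u(t,x+X(t))\to 0$ as $x\to\infty$. For each $\la\in(0,1)$, applying these two limits with tolerances $\ep=1-\la$ and $\ep=\la$ respectively produces constants $M_{1}(\la),M_{2}(\la)>0$, depending only on $\la$, such that for every $t\in\R$,
\begin{equation*}
u(t,y)>\la\ \ \text{whenever}\ \ y\leq X(t)-M_{1}(\la),\qquad u(t,y)<\la\ \ \text{whenever}\ \ y\geq X(t)+M_{2}(\la).
\end{equation*}

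With these two facts in hand, I would first establish (ii). The left-hand implication says $X(t)-M_{1}(\la)$ belongs to the set in the definition of $X^{-}_{\la}(t)$, giving $X^{-}_{\la}(t)\geq X(t)-M_{1}(\la)$. The right-hand implication provides a witness point $y_{\ast}=X(t)+M_{2}(\la)$ with $u(t,y_{\ast})<\la$; hence any $x\geq y_{\ast}$ fails to lie in that set, so $X^{-}_{\la}(t)\leq X(t)+M_{2}(\la)$. The same two witnesses, read through the infimum definition of $X^{+}_{\la}(t)$, yield the matching two-sided bound for $X^{+}_{\la}(t)$. Taking suprema in $t$ proves (ii).

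Part (i) is then immediate. Either one invokes the triangle inequality
\begin{equation*}
X^{+}_{\la_{1}}(t)-X^{-}_{\la_{2}}(t)\leq\bigl|X^{+}_{\la_{1}}(t)-X(t)\bigr|+\bigl|X(t)-X^{-}_{\la_{2}}(t)\bigr|,
\end{equation*}
and cites (ii), or one argues directly by combining the upper bound on $X^{+}_{\la_{1}}(t)$ (supplied by the tail tolerance $\ep=\la_{1}$) with the lower bound on $X^{-}_{\la_{2}}(t)$ (supplied by $\ep=1-\la_{2}$), obtaining $X^{+}_{\la_{1}}(t)-X^{-}_{\la_{2}}(t)\leq M_{1}(\la_{2})+M_{2}(\la_{1})$ uniformly in $t$.

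I do not anticipate a genuine obstacle: the lemma is a direct translation of the uniform convergence hard-wired into Definition \ref{transition-front-defn}. The only point requiring mild care is that $u(t,\cdot)$ is not assumed continuous, so one should argue via the strict inequalities at the witness points $X(t)\pm M_{i}(\la)$, not via any intermediate-value reasoning, when concluding membership or non-membership in the suprema/infima sets of \eqref{defn-interface-locations}.
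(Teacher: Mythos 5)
Your proof is correct and follows essentially the same route as the paper: both arguments simply unpack the uniform-in-$t$ limits in Definition \ref{transition-front-defn} to get $t$-independent offsets from $X(t)$ at which $u$ is above or below the given level, and then read off bounds on $X^{\pm}_{\la}(t)$ straight from the sup/inf definitions in \eqref{defn-interface-locations}. The only cosmetic difference is that the paper proves (i) directly first and obtains (ii) as the special case $\la_{1}=\la=\la_{2}$ (using $X^{-}_{\la}\leq X^{+}_{\la}$), whereas you establish the two-sided bounds of (ii) first and deduce (i); both orders work.
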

\begin{proof}
$\rm(i)$ By the uniform-in-$t$ limits $\lim_{x\to-\infty}u(t,x+X(t))=1$ and $\lim_{x\to\infty}u(t,x+X(t))=0$, there exist $x_{1}$ and $x_{2}$ such that $u(t,x+X(t))>\la_{2}$ for all $x\leq x_{2}$ and $t\in\R$, and $u(t,x+X(t))<\la_{1}$ for all $x\geq x_{1}$ and $t\in\R$. It then follows from the definition of $X_{\la_{2}}^{-}(t)$ and $X_{\la_{1}}^{+}(t)$ that $x_{2}+X(t)\leq X_{\la_{2}}^{-}(t)$ and $x_{1}+X(t)\geq X_{\la_{1}}^{+}(t)$ for all $t\in\R$. The result then follows.

$\rm(ii)$ Let $\la_{1}=\la=\la_{2}$ in the proof of $\rm(i)$, we have  $x_{2}+X(t)\leq X_{\la}^{-}(t)$ and $x_{1}+X(t)\geq X_{\la}^{+}(t)$ for all $t\in\R$. In particular,
\begin{equation*}
x_{2}+X(t)\leq X_{\la}^{-}(t)\leq X_{\la}^{+}(t)\leq x_{1}+X(t),\quad t\in\R.
\end{equation*}
This completes the proof.
\end{proof}

The next result gives the rightward propagation estimate of $u(t,x)$ in terms of $X(t)$.

\begin{thm}\label{lem-propagation-estimate}
There exist $c_{1}>0$ and $T_{1}>0$ such that
\begin{equation*}
X(t)-X(t_{0})\geq c_{1}(t-t_{0}-T_{1}),\quad t\geq t_{0}.
\end{equation*}
\end{thm}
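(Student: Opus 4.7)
The plan is to trap $u$ from below by the evolution of a $t_0$-uniform compactly supported bump under the auxiliary bistable equation $v_t = J\ast v - v + f_B(v)$, exploit the fact that such a bump spreads rightward at speed $c_B$ thanks to $c_B>0$ and $\int_0^1 f_B>0$ postulated in (H2), and then convert the resulting lower bound on $u$ into an inequality for $X(t)$ via the bounded interface width of Lemma \ref{lem-bounded-interface-width}.

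\textbf{Uniform sub-solution and spreading.} Fix $\theta^{*}\in(\theta,1)$. The uniform-in-$t$ limit $u(t,\cdot+X(t))\to 1$ as $x\to-\infty$ furnishes $L_0>0$ with $u(t,x+X(t))\ge\theta^{*}$ for all $x\le -L_0$ and $t\in\R$. Pick a continuous, compactly supported $v_0:\R\to[0,\theta^{*}]$ with $\supp v_0\subset[-L_0-M_0,-L_0]$, with $M_0$ to be fixed large. Then $v_0(x)\le u(t_0,x+X(t_0))$ for every $x$ and every $t_0$. Let $v(t,x)$ solve $v_t=J\ast v-v+f_B(v)$ with $v(0,\cdot)=v_0$. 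When $M_0$ is sufficiently large, standard spreading results for nonlocal bistable equations---whose proof combines an initial local convergence of $v$ to $1$ on an arbitrarily long interval with a sliding of the wave $\phi_B$ beneath $v(T_0,\cdot)$ and subsequent propagation by comparison in $v_t=J\ast v-v+f_B(v)$---produce $\xi_0\in\R$ and $T_{*}>0$ such that, for any preassigned $\epsilon_0>0$,
\begin{equation*}
v(t,x)\ge \phi_B(x-c_B t-\xi_0)-\epsilon_0,\qquad t\ge T_{*},\ x\in\R.
\end{equation*}
Since $f_B(u)\le f(t,y,u)$ for all $(t,y,u)$, the comparison principle for \eqref{main-eqn} (applied in the shifted variable $x+X(t_0)$) yields $u(t,x+X(t_0))\ge v(t-t_0,x)$ for $t\ge t_0$, hence
\begin{equation*}
u(t,x+X(t_0))\ge \phi_B\bigl(x-c_B(t-t_0)-\xi_0\bigr)-\epsilon_0,\qquad t-t_0\ge T_{*}.
\end{equation*}

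\textbf{Position estimate and short-time window.} Fix $\lambda\in(2\epsilon_0,\theta/2)$; by Lemma \ref{lem-bounded-interface-width}(ii) there is $M_1>0$ with $u(t,y)<\lambda$ for every $y\ge X(t)+M_1$ and every $t$. Plugging $x=X(t)+M_1-X(t_0)$ into the previous display and using that $\phi_B$ is monotonically decreasing from $1$ to $0$, we pick $y_0$ with $\phi_B(y_0)=\lambda+\epsilon_0$ and obtain
\begin{equation*}
X(t)-X(t_0)\ge c_B(t-t_0)+\xi_0+y_0-M_1,\qquad t-t_0\ge T_{*}.
\end{equation*}
For the residual window $t-t_0\in[0,T_{*}]$, the crude bound $\sup|u_t|\le C$ (from $\|J\|_{L^1}=1$, $u\in[0,1]$ and $\sup|f|<\infty$), together with the uniform limits and Lemma \ref{lem-bounded-interface-width}, applied iteratively on a partition of $[t_0,t_0+T_{*}]$ of sufficiently small mesh, produces a constant $C_1>0$ with $X(t)-X(t_0)\ge -C_1$ on that window. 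Taking $c_1=c_B$ and $T_1$ large enough so that $c_1(T_{*}-T_1)\le \xi_0+y_0-M_1$ and $-c_1 T_1\le -C_1$ finishes the proof.

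\textbf{Main obstacle.} The essential non-routine ingredient is the nonlocal bistable spreading estimate in which $v$ dominates a shift of $\phi_B$. This is classical in the Fife--McLeod spirit, but in the nonlocal setting it must be argued with care (no smoothing is available), and it is precisely at this step that both $c_B>0$ and the unbalanced condition $\int_0^1 f_B>0$ are indispensable: without them the bistable wave may be stationary or even discontinuous (cf.\ \cite{BaFiReWa97}) and no linear-speed rightward estimate for $X(t)$ can hold.
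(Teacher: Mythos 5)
Your overall strategy is the same as the paper's: compare $u$ from below with solutions of the auxiliary homogeneous bistable equation $v_t=J\ast v-v+f_B(v)$, invoke Fife--McLeod-type convergence to a shifted front of speed $c_B>0$, translate the resulting lower bound on $u$ into a lower bound on $X(t)-X(t_0)$ via Lemma \ref{lem-bounded-interface-width}, and handle an initial time window separately. Your treatment of the short window (iterating the crude bound $\sup|u_t|\le \|J\|_{L^1}+1+\sup|f|<\infty$ over a partition of mesh of order $(\theta^{*}-\lambda)/C$) is sound and in fact somewhat simpler than the paper's, which instead tracks the level set $\xi_B(t)$ of the auxiliary solution near $t=0$.

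However, the central spreading estimate as you state it is false, and the error is load-bearing. For a solution $v$ emanating from a \emph{compactly supported} initial datum $v_0$ one has $v(t,x)\to 0$ as $x\to-\infty$ for each fixed $t$ (indeed $v\le e^{Kt}w$ with $K=\sup|f_B'|$, $w_t=J\ast w-w$, $w(0,\cdot)=v_0$, and $w(t,\cdot)=e^{-t}\sum_{n}\frac{t^{n}}{n!}J^{\ast n}\ast v_0$ vanishes at spatial infinity), whereas $\phi_B(x-c_Bt-\xi_0)\to 1$; so the inequality $v(t,x)\ge \phi_B(x-c_Bt-\xi_0)-\epsilon_0$ for all $x\in\R$ forces $\epsilon_0\ge 1$, incompatible with your later requirement $2\epsilon_0<\lambda<\theta/2$. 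This cannot be dismissed as a boundary technicality, because you then evaluate the bound at $x=X(t)+M_1-X(t_0)$, a point whose location is precisely what you are trying to control: if it lay far to the left (behind the left edge of the expanding bump), the one-front lower bound carries no information there. Two repairs are available. The cleanest --- and it is what the paper does --- is to replace the compact bump by a \emph{front-like} subdatum, namely $u_0\equiv\lambda$ on a left half-line and $u_0\equiv 0$ on $[0,\infty)$, anchored at the leftmost interface $X^{-}_{\lambda}(t_0)$ (or, equivalently, your $\theta^{*}$-datum anchored at $X(t_0)-L_0$), for which the one-sided stability estimate $u_B(t,x;u_0)\ge\phi_B(x-x_B-c_Bt)-q_Be^{-\omega_Bt}$ genuinely holds for all $x\in\R$. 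Alternatively, keep the compact bump but use the two-front lower bound $v\ge\phi_B(x-c_Bt-\xi)+\phi_B(-x-c_Bt-\xi)-1-qe^{-\omega t}$ together with an argument excluding the left alternative (if $X(t)+M_1$ were to the left of the bulk, then $u<\lambda$ on a region where $v$ is close to $1$, contradicting $u\ge v$). As written, the proof has a gap at this step; with either repair the remainder of your argument goes through.
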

\begin{proof}
Fix some $\la\in(\theta,1)$. We  write $X^{-}(t)=X^{-}_{\la}(t)$. Since $\sup_{t\in\R}|X(t)-X^{-}(t)|<\infty$ by Lemma \ref{lem-bounded-interface-width}, it suffices to show
\begin{equation}\label{propagation-estimate-1}
X^{-}(t)-X^{-}(t_{0})\geq c(t-t_{0}-T),\quad t\geq t_{0}
\end{equation}
for some $c>0$ and $T>0$.

Recall $f_{B}$ is as in (H2). Let $(c_{B},\phi_{B})$ with $c_{B}>0$ be the unique solution of
\begin{equation*}
\begin{cases}
J\ast\phi-\phi+c\phi_{x}+f_{B}(\phi)=0,\\
\phi_{x}<0,\,\, \phi(0)=\theta,\,\, \phi(-\infty)=1\,\,\text{and}\quad\phi(\infty)=0
\end{cases}
\end{equation*}
(see \cite{BaFiReWa97} for the existence and uniqueness of $(c_B,\phi_B)$). That is, $c_{B}$ is the unique speed and $\phi_{B}$ is the normalized profile of traveling waves of
\begin{equation}\label{eqn-bistable-homo-B}
u_{t}=J\ast u-u+f_{B}(u).
\end{equation}

Let $u_{0}:\R\to[0,1]$ be a uniformly continuous and nonincreasing function satisfying
\begin{equation}\label{some-special-initial-data-132184}
u_{0}(x)=\begin{cases}
\la,\quad& x\leq x_{0},\\
0,\quad & x\geq0,
\end{cases}
\end{equation}
where $x_{0}<0$ is fixed. By the definition of $X^{-}(t)$, we see that for any $t_{0}\in\R$, there holds $u(t_{0},x+X^{-}(t_{0}))\geq u_{0}(x)$ for all $x\in\R$, and then, by $f(t,x,u)\geq f_{B}(u)$ and the comparison principle, we find
\begin{equation}\label{result-after-cp-42492}
u(t,x+X^{-}(t_{0}))\geq u_{B}(t-t_{0},x;u_{0}),\quad x\in\R,\,\,t\geq t_{0},
\end{equation}
where $u_{B}(t,x;u_{0})$ is the unique solution to \eqref{eqn-bistable-homo-B} with $u_{B}(0,\cdot;u_{0})=u_{0}$. By the choice of $u_{0}$ and the stability of bistable traveling waves (see e.g. \cite{BaFiReWa97}), there are constants $x_{B}=x_{B}(\la)\in\R$, $q_{B}=q_{B}(\la)>0$ and $\om_{B}>0$ such that
\begin{equation*}
u_{B}(t-t_{0},x;u_{0})\geq\phi_{B}(x-x_{B}-c_{B}(t-t_{0}))-q_{B}e^{-\om_{B}(t-t_{0})}, \quad x\in\R,\,\,t\geq t_{0}.
\end{equation*}
Hence,
\begin{equation*}
u(t,x+X^{-}(t_{0}))\geq \phi_{B}(x-x_{B}-c_{B}(t-t_{0}))-q_{B}e^{-\om_{B}(t-t_{0})}, \quad x\in\R,\,\,t\geq t_{0}.
\end{equation*}
Let $T_{0}=T_{0}(\la)>0$ be such that $q_{B}e^{-\om_{B}T_{0}}=\frac{1-\la}{2}$ (making $q_{B}$ larger so that $q_{B}>\frac{1-\la}{2}$ if necessary) and denote by $\xi_{B}(\frac{1+\la}{2})$ the unique point such that $\phi_{B}(\xi_{B}(\frac{1+\la}{2}))=\frac{1+\la}{2}$. Setting $x_{*}=x_{B}+c_{B}(t-t_{0})+\xi_{B}(\frac{1+\la}{2})$, the monotonicity of $\phi_{B}$ implies that for all $t\geq t_{0}+T_{0}$ and $x\leq x^{*}-1$
\begin{equation*}
\begin{split}
u(t,x+X^{-}(t_{0}))&\geq\phi_{B}(x_{*}-1-x_{B}-c_{B}(t-t_{0}))-q_{B}e^{-\om_{B}T_{0}}\\
&>\phi_{B}(x_{*}-x_{B}-c_{B}(t-t_{0}))-q_{B}e^{-\om_{B}T_{0}}\\
&=\phi_{B}(\xi_{B}(\frac{1+\la}{2}))-q_{B}e^{-\om_{B}T_{0}}=\la.
\end{split}
\end{equation*}
This says that $x_{*}-1+X^{-}(t_{0})\leq X^{-}(t)$ for all $t\geq t_{0}+T_{0}$, that is,
\begin{equation}\label{estimate-long-time}
X^{-}(t)-X^{-}(t_{0})\geq x_{B}-1+c_{B}(t-t_{0})+\xi_{B}(\frac{1+\la}{2}),\quad t\geq t_{0}+T_{0}.
\end{equation}

We now estimate $X^{-}(t)-X^{-}(t_{0})$ for $t\in[t_{0},t_{0}+T_{0}]$. We claim that there exists $z=z(T_{0})<0$ such that
\begin{equation}\label{estimate-finite-time}
X^{-}(t)-X^{-}(t_{0})\geq z,\quad t\in[t_{0},t_{0}+T_{0}].
\end{equation}
Let $u_{B}(t,x;u_{0})$ and $u_{B}(t;\la):=u_{B}(t,x;\la)$ be solutions of \eqref{eqn-bistable-homo-B} with $u_{B}(0,x;u_{0})=u_{0}(x)$ and $u_{B}(0;\la)=u_{B}(0,x;\la)\equiv\la$, respectively. By the comparison principle, we have $u_{B}(t,x;u_{0})<u_{B}(t;\la)$ for all $x\in\R$ and $t>0$, and $u_{B}(t,x;u_{0})$ is strictly decreasing in $x$ for $t>0$.

We see that for any $t>0$, $u_{B}(t,-\infty;u_{0})=u_{B}(t;\la)$. This is because that $\frac{d}{dt}u_{B}(t,-\infty;u_{0})=f_{B}(u_{B}(t,-\infty;u_{0}))$ for $t>0$ and $u_{B}(0,-\infty;u_{0})=\la$. Since $\la\in(\theta,1)$, as a solution of the ODE $u_{t}=f_{B}(u)$, $u_{B}(t;\la)$ is strictly increasing in $t$, which implies that $u_{B}(t,-\infty;u_{0})=u_{B}(t;\la)>\la$ for $t>0$. As a result, for any $t>0$ there exists a unique $\xi_{B}(t)\in\R$ such that $u_{B}(t,\xi_{B}(t);u_{0})=\la$. Moreover, $\xi_{B}(t)$ is continuous in $t$.

Setting $x_{**}=\xi_{B}(t-t_{0})$, we find $u(t,x+X^{-}(t_{0}))>\la$ for all $x\leq x_{**}$ by the monotonicity of $u_{B}(t,x;u_{0})$ in $x$, which  together with \eqref{result-after-cp-42492}
implies that
\begin{equation*}
X^{-}(t)\geq x_{**}+X^{-}(t_{0})=\xi_{B}(t-t_{0})+X^{-}(t_{0}),\quad t>t_{0}.
\end{equation*}
Thus, \eqref{estimate-finite-time} follows if $\inf_{t\in(t_{0},t_{0}+T_{0}]}\xi_{B}(t-t_{0})>-\infty$, that is,
\begin{equation}\label{not-to-negative-finity}
\inf_{t\in(0,T_{0}]}\xi_{B}(t)>-\infty.
\end{equation}

We now show \eqref{not-to-negative-finity}. Since $u_{0}(x)=\la$ for $x\leq x_{0}$, continuity with respect to the initial data
 (in sup norm) implies that for any $\ep>0$ there exists $\de>0$ such that
\begin{equation*}
u_{B}(t;\la)-\la\leq\ep\quad\text{and}\quad\sup_{x\leq x_{0}}[u_{B}(t;\la)-u_{B}(t,x;u_{0})]=u_{B}(t;\la)-u_{B}(t,x_{0};u_{0})\leq\ep
\end{equation*}
for all $t\in[0,\de]$, where the equality is due to monotonicity. By (H1), $J$ concentrates near $0$ and decays very fast as $x\to\pm\infty$. Thus, we can choose $x_{1}=x_{1}(\ep)<<x_{0}$ such that
\begin{equation*}
\int_{-\infty}^{x_{0}}J(x-y)dy\geq1-\ep,\quad x\le x_1.
\end{equation*}
Now, for any $x\leq x_{1}$ and $t\in(0,\de]$, we have
\begin{equation*}
\begin{split}
\frac{d}{dt}u_{B}(t,x;u_{0})&=\int_{\R}J(x-y)u_{B}(t,y;u_{0})dy-u_{B}(t,x;u_{0})+f_{B}(u_{B}(t,x;u_{0}))\\
&\geq\int_{-\infty}^{x_{0}}J(x-y)u_{B}(t,y;u_{0})dy-u_{B}(t,x;u_{0})+f_{B}(u_{B}(t,x;u_{0}))\\
&\geq(1-\ep)\inf_{x\leq x_{0}}u_{B}(t,x;u_{0})-u_{B}(t;\la)+f_{B}(u_{B}(t,x;u_{0}))\\
&=-(1-\ep)\sup_{x\leq x_{0}}[u_{B}(t;\la)-u_{B}(t,x;u_{0})]-\ep u_{B}(t;\la)+f_{B}(u_{B}(t,x;u_{0}))\\
&\geq-\ep(1-\ep)-\ep(\la+\ep)+f_{B}(u_{B}(t,x;u_{0}))>0
\end{split}
\end{equation*}
if we choose $\ep>0$ sufficiently small, since then $f_{B}(u_{B}(t,x;u_{0}))$ is close to $f_{B}(\la)$, which is positive. This simply means that $u_{B}(t,x;u_{0})>\la$ for all $x\leq x_{1}$ and $t\in(0,\de]$, which implies that $\xi_{B}(t)>x_{1}$ for $t\in(0,\de]$. The continuity of $\xi_{B}$ then leads to \eqref{not-to-negative-finity}. This proves \eqref{estimate-finite-time}. \eqref{propagation-estimate-1} then follows from \eqref{estimate-long-time} and \eqref{estimate-finite-time}. This completes the proof.
\end{proof}

As a simple consequence of Theorem \ref{lem-propagation-estimate}, we have

\begin{cor}\label{cor-propagation-estimate}
There holds $X(t)\to\pm\infty$ as $t\to\pm\infty$. In particular,  $u(t,x)\to1$ as $t\to\infty$ and $u(t,x)\to0$ as $t\to-\infty$ locally uniformly in $x$.
\end{cor}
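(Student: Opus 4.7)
The plan is to deduce both claims directly from Theorem \ref{lem-propagation-estimate} together with the uniform-in-$t$ limits in the definition of a transition front; no further machinery should be needed, so this is really a bookkeeping exercise rather than a place where one expects a substantive obstacle.

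First I would establish $X(t)\to\pm\infty$. Taking any fixed reference time $t_{0}\in\R$, the estimate
\begin{equation*}
X(t)-X(t_{0})\geq c_{1}(t-t_{0}-T_{1}),\quad t\geq t_{0}
\end{equation*}
gives $X(t)\to\infty$ as $t\to\infty$ at once, since $c_{1}>0$. For the other direction, I would apply the same estimate with the roles of the two times reversed: fix any $t_{1}\in\R$ and let $t\leq t_{1}$; applying Theorem \ref{lem-propagation-estimate} with $t_{0}=t$ and upper time $t_{1}$ yields $X(t_{1})-X(t)\geq c_{1}(t_{1}-t-T_{1})$, so $X(t)\leq X(t_{1})-c_{1}(t_{1}-t-T_{1})\to-\infty$ as $t\to-\infty$.

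Second I would deduce the pointwise/local uniform convergence of $u(t,x)$ by a change of variables. Given any bounded set $[-M,M]\subset\R$ and $\ep>0$, the uniform-in-$t$ limit $\lim_{y\to-\infty}u(t,y+X(t))=1$ from Definition \ref{transition-front-defn} yields some $Y>0$ such that $u(t,y+X(t))\geq 1-\ep$ for all $y\leq -Y$ and all $t\in\R$. Since $X(t)\to\infty$, there is $t^{*}$ such that $X(t)\geq M+Y$ for $t\geq t^{*}$; for such $t$ and any $x\in[-M,M]$ one has $x-X(t)\leq -Y$, whence
\begin{equation*}
u(t,x)=u\bigl(t,(x-X(t))+X(t)\bigr)\geq 1-\ep.
\end{equation*}
This gives convergence locally uniformly in $x$ as $t\to\infty$. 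The analogous argument, using the uniform-in-$t$ limit at $+\infty$ and $X(t)\to-\infty$ as $t\to-\infty$, gives $u(t,x)\to 0$ locally uniformly as $t\to-\infty$.

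I do not foresee a real obstacle here; the only care required is to notice that the locally uniform convergence in $x$ comes from the uniformity in $t$ built into Definition \ref{transition-front-defn} combined with the fact that on any bounded $x$-window the shifted argument $x-X(t)$ is pushed to $-\infty$ (resp.\ $+\infty$) uniformly once $|X(t)|$ is large enough.
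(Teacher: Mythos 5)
Your proposal is correct and follows essentially the same route as the paper: the paper derives $X(t)\to\pm\infty$ from the estimate of Theorem \ref{lem-propagation-estimate} exactly as you do (letting $t\to\infty$ with $t_{0}$ fixed, then letting $t_{0}\to-\infty$ with $t$ fixed), and leaves the ``in particular'' part implicit. Your explicit verification of the locally uniform convergence of $u(t,x)$ via the uniform-in-$t$ limits in Definition \ref{transition-front-defn} is the natural way to fill in that omitted detail and is correct.
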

\begin{proof}
We have from Lemma \ref{lem-propagation-estimate} that
\begin{equation*}
X(t)-X(t_{0})\geq c_{1}(t-t_{0}-T_{1}) ,\quad t\geq t_{0}.
\end{equation*}
Setting $t\to\infty$ in the above estimate, we find $X(t)\to\infty$ as $t\to\infty$. Setting $t_{0}\to-\infty$, we find $X(t_{0})\to-\infty$ as $t_{0}\to-\infty$.
\end{proof}

This corollary shows that any transition front travels from the left infinity to the right infinity. Thus, steady-state-like transition fronts, blocking the propagations of solutions, do not exist.

\section{Proof of Theorem \ref{thm-regularity-of-tf}}\label{subsec-unconditional-regularity}

We prove Theorem \ref{thm-regularity-of-tf} in this section. Throughout this section, we assume that (H1)-(H3) hold and $u(t,x)$ is an arbitrary transition front of \eqref{main-eqn} with interface location function $X(t)$.

To prove Theorem \ref{thm-regularity-of-tf}, we first do some preparations and prove several lemmas. Fix some $0<\de_{0}\ll1$. For $(t,x)\in\R\times\R$ and $\eta\in\R$ with $0<|\eta|\leq\de_{0}$, we set
\begin{equation*}
v^{\eta}(t,x)=\frac{u(t,x+\eta)-u(t,x)}{\eta}.
\end{equation*}
It is easy to see that $v^{\eta}(t,x)$ satisfies
\begin{equation}\label{lipschitz-eqn-987654}
v^{\eta}_{t}(t,x)=b^{\eta}(t,x)-v^{\eta}(t,x)+a^{\eta}(t,x)v^{\eta}(t,x)+\tilde{a}^{\eta}(t,x),
\end{equation}
where
\begin{equation*}
\begin{split}
a^{\eta}(t,x)&=\frac{f(t,x,u(t,x+\eta))-f(t,x,u(t,x))}{u(t,x+\eta)-u(t,x)},\\
\tilde{a}^{\eta}(t,x)&=\frac{f(t,x+\eta,u(t,x+\eta))-f(t,x,u(t,x+\eta))}{\eta},\\
b^{\eta}(t,x)&=\int_{\R}J(x-y)v^{\eta}(t,y)dy=\int_{\R}\frac{J(x-y+\eta)-J(x-y)}{\eta}u(t,y)dy.
\end{split}
\end{equation*}
Hence, for any fixed $x$, treating \eqref{lipschitz-eqn-987654} as an ODE in the variable $t$, we find from the variation of constants formula that for any $t\geq t_{0}$
\begin{equation}\label{diff-integral-sol}
\begin{split}
v^{\eta}(t,x)&=v^{\eta}(t_{0},x)e^{-\int_{t_{0}}^{t}(1-a^{\eta}(s,x))ds}+\int_{t_{0}}^{t}b^{\eta}(\tau,x)e^{-\int_{\tau}^{t}(1-a^{\eta}(s,x))ds}d\tau\\
&\quad\quad+\int_{t_{0}}^{t}\tilde{a}^{\eta}(\tau,x)e^{-\int_{\tau}^{t}(1-a^{\eta}(s,x))ds}d\tau.
\end{split}
\end{equation}

Moreover, we set
\begin{equation*}
L_{0}=1+\de_{0}+\sup_{t\in\R}|X(t)-X_{\theta_{0}}^{+}(t)|\quad\text{and}\quad L_{1}=1+\de_{0}+\sup_{t\in\R}|X(t)-X_{\theta_{1}}^{-}(t)|,
\end{equation*}
where $\theta_{1}$ and $\theta_{0}$ are as in (H2) and (H3), respectively. By Lemma \ref{lem-bounded-interface-width}, $L_{0}<\infty$ and $L_{1}<\infty$. We also set
\begin{equation*}
\begin{split}
I_{l}(t)&=(-\infty,X(t)-L_{1}),\\
I_{m}(t)&=[X(t)-L_{1},X(t)+L_{0}],\\
 I_{r}(t)&=(X(t)+L_{0},\infty)
\end{split}
\end{equation*}
for $t\in\R$. Clearly, $I_{l}(t)$, $I_{m}(t)$ and $I_{r}(t)$ are disjoint and $I_{l}(t)\cup I_{m}(t)\cup I_{r}(t)=\R$. Since $X(t)$ may jump, so do $I_{l}(t)$, $I_{m}(t)$ and $I_{r}(t)$.

Since $X(t)\to\pm\infty$ as $t\to\pm\infty$ by Corollary \ref{cor-propagation-estimate}, for any fixed $x\in\R$, there hold $x\in I_{r}(t)$ for all $t\ll-1$ and $x\in I_{l}(t)$ for all $t\gg1$. Thus, for any fixed $x\in\R$,
\begin{equation*}
\begin{split}
t_{\rm first}(x)&=\sup\big\{\tilde{t}\in\R\big|x\in I_{r}(t)\,\,\text{for all}\,\,t\leq \tilde{t}\big\},\\
t_{\rm last}(x)&=\inf\big\{\tilde{t}\in\R\big|x\in I_{l}(t)\,\,\text{for all}\,\,t\geq \tilde{t}\big\}
\end{split}
\end{equation*}
are well-defined. We see that if $X(t)$ is continuous, then $t_{\rm first}(x)$ and $t_{\rm last}(x)$ are the first time and the last time that $x$ is in $I_{m}(t)$. Clearly, $-\infty<t_{\rm first}(x)\leq t_{\rm last}(x)<\infty$ (notice, $t_{\rm first}(x)=t_{\rm last}(x)$ may happen since $X(t)$ may jump). Although the functions $x\mapsto t_{\rm first}(x)$ and $x\mapsto t_{\rm last}(x)$ are unbounded, their difference is a bounded function as given by

\begin{lem}
There holds
\begin{equation}\label{lipschitz-growth-time}
T:=\sup_{x\in\R}[t_{\rm last}(x)-t_{\rm first}(x)]<\infty.
\end{equation}
\end{lem}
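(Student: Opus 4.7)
The plan is to exploit the rightward propagation estimate from Theorem \ref{lem-propagation-estimate}, namely $X(t)-X(t_{0})\geq c_{1}(t-t_{0}-T_{1})$ for $t\geq t_{0}$. The guiding intuition is that $x\in I_{m}(t)$ precisely when $X(t)\in[x-L_{0},\,x+L_{1}]$, and the propagation estimate forces $X$ to traverse any such interval of length $L_{0}+L_{1}$ in uniformly bounded time, irrespective of where on the real line that interval sits. Hence $t_{\rm last}(x)-t_{\rm first}(x)$ should be controlled by a constant depending only on $c_{1}$, $T_{1}$, $L_{0}$, $L_{1}$.

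First I would unwind the definitions: by the definition of $t_{\rm first}(x)$, for every $t<t_{\rm first}(x)$ one has $x\in I_{r}(t)$ and hence $X(t)<x-L_{0}$; symmetrically, for every $t>t_{\rm last}(x)$ one has $X(t)>x+L_{1}$. Second, using the sup/inf characterizations, I would extract approximating sequences $t_{n}\in[t_{\rm first}(x),\,t_{\rm first}(x)+1/n]$ with $X(t_{n})\geq x-L_{0}$ and $s_{n}\in[t_{\rm last}(x)-1/n,\,t_{\rm last}(x)]$ with $X(s_{n})\leq x+L_{1}$. Each such $t_{n}$ must exist, for otherwise $x\in I_{r}(t)$ would hold throughout $(-\infty,\,t_{\rm first}(x)+1/n]$, contradicting $t_{\rm first}(x)$ being the supremum; similarly for $s_{n}$. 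Third, assuming $t_{\rm first}(x)<t_{\rm last}(x)$ (the opposite case is trivial), we have $t_{n}<s_{n}$ for all sufficiently large $n$, so applying Theorem \ref{lem-propagation-estimate} with $t_{0}=t_{n}$ and $t=s_{n}$ yields
\[
c_{1}(s_{n}-t_{n}-T_{1})\leq X(s_{n})-X(t_{n})\leq (x+L_{1})-(x-L_{0})=L_{0}+L_{1},
\]
hence $s_{n}-t_{n}\leq T_{1}+(L_{0}+L_{1})/c_{1}$. Letting $n\to\infty$ produces $t_{\rm last}(x)-t_{\rm first}(x)\leq T_{1}+(L_{0}+L_{1})/c_{1}$ uniformly in $x$, which delivers $T<\infty$.

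The principal technical subtlety I anticipate is that $X(t)$ is not assumed to be continuous and may indeed jump (as emphasized in the paragraph preceding the statement), so one cannot literally evaluate $X$ at $t_{\rm first}(x)$ or $t_{\rm last}(x)$ and expect the natural inequalities $X(t_{\rm first}(x))\geq x-L_{0}$ and $X(t_{\rm last}(x))\leq x+L_{1}$ to hold. The approximation-from-outside construction of the sequences $t_{n}$, $s_{n}$ is precisely what bridges the strict inequalities valid off the critical times with the propagation estimate. Once this detour is in place the proof reduces to a one-line comparison, and the resulting bound $T_{1}+(L_{0}+L_{1})/c_{1}$ is manifestly independent of $x$.
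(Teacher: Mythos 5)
Your proof is correct and rests on the same mechanism as the paper's: the rightward propagation estimate of Theorem \ref{lem-propagation-estimate} forces $X(t)$ to traverse the window $[x-L_{0},x+L_{1}]$ in a time bounded uniformly in $x$, with approximating sequences used to handle the possible jumps of $X(t)$ at the critical times. The paper organizes this slightly differently (it works one-sidedly from $t_{\rm first}(x)$, pushing forward until $x\in I_{l}(t)$ and then invoking the infimum definition of $t_{\rm last}(x)$, with the approximating sequence needed only in the case $x\in I_{r}(t_{\rm first}(x))$), but your symmetric two-sided version is an equally valid rendering of the same idea and yields the same kind of uniform bound.
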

\begin{proof}
To see this, we suppose $t_{\rm first}(x)<t_{\rm last}(x)$. Due to possible jumps, we consider two cases.

If $x\notin I_{r}(t_{\rm first}(x))$, then $x\in I_{l}(t_{\rm first}(x))\cup I_{m}(t_{\rm first}(x))$, that is, $x\leq X(t_{\rm first}(x))+L_{0}$. Thus, for all $t\geq t_{\rm first}(x)+T_{1}+\frac{L_{0}+L_{1}+1}{c_{1}}$, we see from Theorem \ref{lem-propagation-estimate} that
\begin{equation*}
x\leq X(t_{\rm first}(x))+L_{0}\leq X(t)-c_{1}(t-t_{\rm first}(x)-T_{1})+L_{0}\leq X(t)-L_{1}-1.
\end{equation*}
This, implies that $x\in I_{l}(t)$ for all $t\geq t_{\rm first}(x)+T_{1}+\frac{L_{0}+L_{1}+1}{c_{1}}$, and hence, by definition
\begin{equation}\label{growth-time-1}
t_{\rm last}(x)\leq t_{\rm first}(x)+T_{1}+\frac{L_{0}+L_{1}+1}{c_{1}}.
\end{equation}

If $x\in I_{r}(t_{\rm first}(x))$, then we can find a sequence $\{t_{n}\}$ satisfying $t_{n}>t_{\rm first}(x)$, $t_{n}\to t_{\rm first}(x)$ as $n\to\infty$ and $x\notin I_{r}(t_{n}(x))$. Then, similar arguments as in the case $x\notin I_{r}(t_{\rm first}(x))$ lead to $t_{\rm last}(x)\leq t_{n}+T_{1}+\frac{L_{0}+L_{1}+1}{c_{1}}$. Passing to the limit $n\to\infty$, we find \eqref{growth-time-1} again. Hence, we have shown \eqref{lipschitz-growth-time}.
\end{proof}

To control $v^{\eta}(t,x)$, it is crucial to control $1-a^{\eta}(t,x)$, which is achieved by means of $t_{\rm first}(x)$ and $t_{\rm last}(x)$ and is given in the following

\begin{lem}
For any $(t,x)\in\R\times\R$ and $0<|\eta|\leq\de_{0}$, there holds
\begin{equation}\label{lipschitz-coefficient-1}
a^{\eta}(t,x)\leq\begin{cases}
1-\ka_{0},&\quad t< t_{\rm first}(x),\\
0,&\quad t> t_{\rm last}(x).
\end{cases}
\end{equation}
\end{lem}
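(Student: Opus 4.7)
The plan is to reduce the two cases to location statements about $u(t,x)$ and $u(t,x+\eta)$ relative to the thresholds $\theta_{0}$ and $\theta_{1}$, and then to apply (H3) and the second bullet of (H2) respectively. Specifically, when $u(t,x+\eta)\neq u(t,x)$, the mean value theorem (valid since $f$ is $C^{1}$ in $u$ by (H2)) yields some $\xi=\xi(t,x,\eta)$ lying between $u(t,x)$ and $u(t,x+\eta)$ with $a^{\eta}(t,x)=f_{u}(t,x,\xi)$. The case $u(t,x+\eta)=u(t,x)$ can be dismissed at once, since then $v^{\eta}(t,x)=0$ and any convenient value may be assigned to $a^{\eta}$ without affecting \eqref{lipschitz-eqn-987654}; the remaining task is to place $\xi$ in the appropriate interval.

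For $t<t_{\rm first}(x)$, the definition of $t_{\rm first}(x)$ forces $x\in I_{r}(t)$, i.e.\ $x>X(t)+L_{0}$. Because $|\eta|\leq\de_{0}$ and $L_{0}=1+\de_{0}+\sup_{s\in\R}|X(s)-X_{\theta_{0}}^{+}(s)|$, the $\de_{0}$-buffer built into $L_{0}$ ensures that both $x$ and $x+\eta$ exceed $X_{\theta_{0}}^{+}(t)+1$. Reading off \eqref{defn-interface-locations}, any $y>X_{\theta_{0}}^{+}(t)$ lies in an interval $[x',\infty)$ on which $u(t,\cdot)<\theta_{0}$ (take $x'$ in the defining set of the infimum with $X_{\theta_{0}}^{+}(t)\leq x'<y$), so $u(t,x),u(t,x+\eta)<\theta_{0}$. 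Hence $\xi\in[0,\theta_{0}]$, and (H3) gives $f_{u}(t,x,\xi)\leq 1-\ka_{0}$, which is the first half of \eqref{lipschitz-coefficient-1}.

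For $t>t_{\rm last}(x)$, a symmetric argument applies. Now $x\in I_{l}(t)$, i.e.\ $x<X(t)-L_{1}$, and the $\de_{0}$-buffer in $L_{1}=1+\de_{0}+\sup_{s\in\R}|X(s)-X_{\theta_{1}}^{-}(s)|$ pushes both $x$ and $x+\eta$ strictly to the left of $X_{\theta_{1}}^{-}(t)-1$. The sup-definition in \eqref{defn-interface-locations} then gives $u(t,y)>\theta_{1}$ whenever $y<X_{\theta_{1}}^{-}(t)$, so $u(t,x),u(t,x+\eta)>\theta_{1}$, hence $\xi\in[\theta_{1},1]$, and the second bullet of (H2) yields $a^{\eta}(t,x)=f_{u}(t,x,\xi)\leq 0$.

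There is no genuine obstacle: the argument is essentially bookkeeping, verifying that the $\de_{0}$-buffers in the definitions of $L_{0}$ and $L_{1}$ were designed precisely to absorb an $\eta$-shift of size at most $\de_{0}$. The only step that needs a moment's care, because $u(t,\cdot)$ may be discontinuous, is the passage from $y>X_{\theta_{0}}^{+}(t)$ (resp.\ $y<X_{\theta_{1}}^{-}(t)$) to $u(t,y)<\theta_{0}$ (resp.\ $u(t,y)>\theta_{1}$), which is handled exactly as indicated above by choosing an element of the defining sup/inf set slightly past $y$.
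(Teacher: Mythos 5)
Your proof is correct and takes essentially the same route as the paper's: the paper merely records that $t<t_{\rm first}(x)$ forces $x\in I_{r}(t)$ and $t>t_{\rm last}(x)$ forces $x\in I_{l}(t)$, and then cites (H2), (H3) and the choices of $L_{0}$, $L_{1}$, leaving implicit the mean value theorem step and the $\de_{0}$-buffer and interface-location bookkeeping that you write out in full. Your handling of the degenerate case $u(t,x+\eta)=u(t,x)$ and of the possible discontinuity of $u(t,\cdot)$ supplies details the paper omits but does not change the argument.
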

\begin{proof}
By the definition of $t_{\rm first}(x)$ and $t_{\rm last}(x)$, we see
\begin{equation}\label{result-from-defn}
x\in\begin{cases}
I_{r}(t),&\quad t< t_{\rm first}(x),\\
I_{l}(t),&\quad t> t_{\rm last}(x).
\end{cases}
\end{equation}
By (H2), (H3) and the choices of $L_{0}$ and $L_{1}$, we find that for any $(t,x)\in\R\times\R$ and $0<|\eta|\leq\de_{0}$
\begin{equation*}
a^{\eta}(t,x)\leq\begin{cases}
1-\ka_{0},&\quad x\in I_{r}(t),\\
0,&\quad x\in I_{l}(t).
\end{cases}
\end{equation*}
The lemma then follows from \eqref{result-from-defn}.
\end{proof}

The above lemma says that for any fixed $x$, the solution $v^{\eta}(t,x)$ of the ODE \eqref{lipschitz-eqn-987654} can only grow for a period of time that is not longer than $T$.

In the next lemma, we show that $u(t,x)$ is continuous in space.

\begin{lem}\label{lem-tf-continuity}
For any $t\in\R$, $u(t,x)$ is continuous in $x$. Moreover, there holds
\begin{equation*}
\sup_{t\in\R}\sup_{x\neq y}\bigg|\frac{u(t,x)-u(t,y)}{x-y}\bigg|<\infty.
\end{equation*}
\end{lem}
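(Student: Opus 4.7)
The plan is to prove the stronger uniform Lipschitz bound directly, namely to show that $\sup |v^\eta(t,x)| < \infty$ as $(t,x)$ ranges over $\R\times\R$ and $\eta$ over $0<|\eta|\leq\de_{0}$; this at once yields continuity of $u(t,\cdot)$ and the asserted uniform difference quotient bound (for $|x-y|>\de_{0}$ one concatenates $O(|x-y|/\de_0)$ steps of length $\leq\de_0$). The strategy is to apply the variation of constants formula \eqref{diff-integral-sol} and send the base time $t_0\to-\infty$, exploiting the global-in-time existence of the front to circumvent the lack of a priori control on $v^{\eta}(t_0,\cdot)$.

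First I would bound the coefficients in \eqref{lipschitz-eqn-987654} uniformly. The mean value theorem gives $|\tilde a^{\eta}(t,x)|\leq\sup|f_x|$, and a change of variable followed by Fubini gives $|b^{\eta}(t,x)|\leq\int_\R |J'(z)|\,dz$ (using $\|u\|_\infty\leq 1$ and (H1)). For $a^{\eta}$ the decisive input is \eqref{lipschitz-coefficient-1}: setting $\ka:=\min(\ka_0,1)$ and $M:=\sup|f_u|$, one has $1-a^{\eta}(s,x)\geq\ka$ for every $s\notin[t_{\rm first}(x),t_{\rm last}(x)]$, whereas on this window (of length at most $T$ by \eqref{lipschitz-growth-time}) one only has $1-a^{\eta}\geq 1-M$. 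A short case analysis on the position of $\tau$ and $t$ relative to the window then yields, for all $\tau\leq t$,
\begin{equation*}
\int_{\tau}^{t}\bigl(1-a^{\eta}(s,x)\bigr)\,ds\;\geq\;\ka(t-\tau)-\bigl(\ka+(M-1)^{+}\bigr)T,
\end{equation*}
so that the exponential factor in \eqref{diff-integral-sol} satisfies $e^{-\int_{\tau}^{t}(1-a^{\eta}(s,x))\,ds}\leq C_1\, e^{-\ka(t-\tau)}$ with $C_1:=e^{(\ka+(M-1)^{+})T}$ independent of $(t,x,\tau,\eta)$.

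Substituting into \eqref{diff-integral-sol} together with the bounds on $\tilde a^{\eta}$, $b^{\eta}$ and the trivial estimate $|v^{\eta}(t_0,x)|\leq 1/|\eta|$ gives
\begin{equation*}
|v^{\eta}(t,x)|\;\leq\;\frac{C_1}{|\eta|}\,e^{-\ka(t-t_0)}\;+\;\frac{C_1\bigl(\int_\R |J'(z)|\,dz+\sup|f_x|\bigr)}{\ka}.
\end{equation*}
The second term is already a uniform constant; since the front is defined for all $t\in\R$, I may let $t_0\to-\infty$, at which point the first term vanishes. This delivers the required uniform bound on $v^{\eta}$ and thereby the lemma.

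The main obstacle is exactly the one highlighted in the introduction: one has no a priori control on $v^{\eta}(t_0,\cdot)$ as $\eta\to 0$, so freezing $t_0$ and letting $\eta\to 0$ would blow $1/|\eta|$ up. The resolution --- sending $t_0\to-\infty$ first so that the exponential decay produced by \eqref{lipschitz-coefficient-1} defeats the $\eta$-dependent prefactor --- is precisely where the global-in-time hypothesis on transition fronts is used. Possible jumps of $X(\cdot)$, and in particular the coincidence $t_{\rm first}(x)=t_{\rm last}(x)$, cause no difficulty since they only shrink the "bad" middle window and are already absorbed into \eqref{lipschitz-growth-time}.
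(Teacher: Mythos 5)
Your proposal is correct and follows essentially the same route as the paper: the variation-of-constants formula \eqref{diff-integral-sol}, the control of $1-a^{\eta}$ outside the bounded window $[t_{\rm first}(x),t_{\rm last}(x)]$ coming from \eqref{lipschitz-coefficient-1} and \eqref{lipschitz-growth-time}, and the elimination of the $1/|\eta|$ initial term by pushing $t_{0}\to-\infty$. The only (harmless) cosmetic differences are that you package the paper's three-case estimate \eqref{lipschitz-estimates} into the single bound $e^{-\int_{\tau}^{t}(1-a^{\eta})ds}\leq C_{1}e^{-\ka(t-\tau)}$ and send $t_{0}\to-\infty$ at fixed $\eta$ rather than coupling $t_{0}$ to $\eta$ via $t_{\rm first}(x)-t_{0}=1/|\eta|$.
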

\begin{proof}
Since $0<u(t,x)<1$, we trivially have
\begin{equation*}
\sup_{t\in\R}\sup_{|x-y|\geq\de_{0}}\bigg|\frac{u(t,x)-u(t,y)}{x-y}\bigg|<\infty.
\end{equation*}
Thus, we only need to show
\begin{equation}\label{local-lipschitz-uniform-190}
\sup_{t\in\R}\sup_{0<|x-y|\leq\de_{0}}\bigg|\frac{u(t,x)-u(t,y)}{x-y}\bigg|<\infty.
\end{equation}
To do so, we consider \eqref{diff-integral-sol} for some fixed $x\in\R$. We are going to take $t_{0}\to-\infty$ along some subsequence, and so $t_{0}\ll t_{\rm first}(x)$. For $t$, there are  three cases: $t< t_{\rm first}(x)$, $t\in[t_{\rm first}(x),t_{\rm last}(x)]$ and $t> t_{\rm last}(x)$. Here, we only consider the case $t> t_{\rm last}(x)$; other two cases can be treated similarly and are simpler.

We first note
\begin{equation*}
M_{0}:=\sup_{(t,x)\in\R\times\R}\sup_{0<|\eta|\leq\de_{0}}\Big[|b^{\eta}(t,x)|+|\tilde{a}^{\eta}(t,x)|\Big]<\infty
\end{equation*}
and the following uniform-in-$\eta$ estimates hold:
\begin{equation}\label{lipschitz-estimates}
\begin{split}
e^{-\int_{r}^{t_{\rm first}(x)}(1-a^{\eta}(s,x))ds}&\leq e^{-\ka_{0}(t_{\rm first}(x)-r)},\quad r\leq t_{\rm first}(x),\\
e^{-\int_{r}^{t_{\rm last}(x)}(1-a^{\eta}(s,x))ds}&\leq e^{C_{a}T},\quad r\in[t_{\rm first}(x),t_{\rm last}(x)],\\
e^{-\int_{r}^{t}(1-a^{\eta}(s,x))ds}&\leq e^{-(t-r)},\quad r\in[t_{\rm last}(x),t],
\end{split}
\end{equation}
where $C_{a}:=\sup_{(t,x)\in\R\times\R}\sup_{0<|\eta|\leq\de_{0}}|1-a^{\eta}(t,x)|<\infty$ by (H2). They are simple consequences of \eqref{lipschitz-growth-time} and \eqref{lipschitz-coefficient-1}.

For the second and third terms on the right-hand side of \eqref{diff-integral-sol}, we have
\begin{equation*}
\begin{split}
&\frac{1}{M_{0}}\bigg|\int_{t_{0}}^{t}\big[b^{\eta}(\tau,x)+\tilde{a}^{\eta}(\tau,x)\big]e^{-\int_{\tau}^{t}(1-a^{\eta}(s,x))ds}d\tau\bigg|\leq \int_{t_{0}}^{t}e^{-\int_{\tau}^{t}(1-a^{\eta}(s,x))ds}d\tau\\
&\quad\quad=\int_{t_{0}}^{t_{\rm first}(x)}e^{-\int_{\tau}^{t}(1-a^{\eta}(s,x))ds}d\tau+\int_{t_{\rm first}(x)}^{t_{\rm last}(x)}e^{-\int_{\tau}^{t}(1-a^{\eta}(s,x))ds}d\tau+\int_{t_{\rm last}(x)}^{t}e^{-\int_{\tau}^{t}(1-a^{\eta}(s,x))ds}d\tau.
\end{split}
\end{equation*}
For the three terms on the right-hand side of the above estimate, we use \eqref{lipschitz-estimates} to deduce
\begin{equation*}
\begin{split}
&\int_{t_{0}}^{t_{\rm first}(x)}e^{-\int_{\tau}^{t}(1-a^{\eta}(s,x))ds}d\tau\\
&\quad\quad=\int_{t_{0}}^{t_{\rm first}(x)}e^{-\int_{\tau}^{t_{\rm first}(x)}(1-a^{\eta}(s,x))ds}e^{-\int_{t_{\rm first}(x)}^{t_{\rm last}(x)}(1-a^{\eta}(s,x))ds}e^{-\int_{t_{\rm last}(x)}^{t}(1-a^{\eta}(s,x))ds}d\tau\\
&\quad\quad\leq\int_{t_{0}}^{t_{\rm first}(x)}e^{-\ka_{0}(t_{\rm first}(x)-\tau)}e^{C_{a}T}e^{-(t-t_{\rm last}(x))}d\tau\leq\frac{e^{C_{a}T}}{\ka_{0}},
\end{split}
\end{equation*}

\begin{equation*}
\begin{split}
\int_{t_{\rm first}(x)}^{t_{\rm last}(x)}e^{-\int_{\tau}^{t}(1-a^{\eta}(s,x))ds}d\tau&=\int_{t_{\rm first}(x)}^{t_{\rm last}(x)}e^{-\int_{\tau}^{t_{\rm last}(x)}(1-a^{\eta}(s,x))ds}e^{-\int_{t_{\rm last}(x)}^{t}(1-a^{\eta}(s,x))ds}d\tau\\
&\leq\int_{t_{\rm first}(x)}^{t_{\rm last}(x)}e^{C_{a}T}e^{-(t-t_{\rm last}(x))}d\tau\leq Te^{C_{a}T}
\end{split}
\end{equation*}
and
\begin{equation*}
\begin{split}
\int_{t_{\rm last}(x)}^{t}e^{-\int_{\tau}^{t}(1-a^{\eta}(s,x))ds}d\tau\leq\int_{t_{\rm last}(x)}^{t}e^{-(t-\tau)}d\tau\leq1.
\end{split}
\end{equation*}
Hence, we have shown
\begin{equation}\label{first-estimate-190}
\bigg|\int_{t_{0}}^{t}\big[b^{\eta}(\tau,x)+\tilde{a}^{\eta}(\tau,x)\big]e^{-\int_{\tau}^{t}(1-a^{\eta}(s,x))ds}d\tau\bigg|\leq M_{0}\bigg(\frac{e^{C_{a}T}}{\ka_{0}}+Te^{C_{a}T}+1\bigg).
\end{equation}
Notice the above estimate is universal.

For the first term on the right hand side of \eqref{diff-integral-sol}, we choose $t_{0}$ such that $t_{\rm first}(x)-t_{0}=\frac{1}{|\eta|}$ and claim that
\begin{equation}\label{diff-claim-2}
v^{\eta}(t_{0},x)e^{-\int_{t_{0}}^{t}(1-a^{\eta}(s,x))ds}\to0\quad\text{as}\quad\eta\to0.
\end{equation}
In fact, from $|v^{\eta}(t_{0},x)|\leq\frac{1}{|\eta|}$ and \eqref{lipschitz-estimates}, we see
\begin{equation*}
\begin{split}
\bigg|v^{\eta}(t_{0},x)e^{-\int_{t_{0}}^{t}(1-a^{\eta}(s,x))ds}\bigg|&\leq\frac{1}{|\eta|}e^{-[\int_{t_{0}}^{t_{\rm first}(x)}+\int_{t_{\rm first}(x)}^{t_{\rm last}(x)}+\int_{t_{\rm last}(x)}^{t}](1-a^{\eta}(s,x))ds}\\
&\leq\frac{1}{|\eta|}e^{-\ka_{0}(t_{\rm first}(x)-t_{0})}e^{C_{a}T}e^{-(t-t_{\rm last}(x))}\\
&\leq\frac{1}{|\eta|}e^{-\frac{\ka_{0}}{|\eta|}}e^{C_{a}T}\to0\quad\text{as}\quad\eta\to0.
\end{split}
\end{equation*}

Consequently, choosing $t_{0}$ such that $t_{\rm first}(x)-t_{0}=\frac{1}{|\eta|}$, we conclude from \eqref{diff-integral-sol}, \eqref{first-estimate-190} and \eqref{diff-claim-2} that
\begin{equation*}
\sup_{(t,x)\in\R\times\R}\sup_{0<|\eta|\leq\de_{0}}|v^{\eta}(t,x)|\leq\tilde{C}+M_{0}\bigg(\frac{e^{C_{a}T}}{\ka_{0}}+Te^{C_{a}T}+1\bigg),
\end{equation*}
where $\tilde{C}=\tilde{C}(\ka_{0},\de_{0},T)>0$. This proves \eqref{local-lipschitz-uniform-190}, and hence, the lemma follows.
\end{proof}

We see that for any $(t,x)\in\R\times\R$, as $\eta\to0$, we have
\begin{equation}\label{diff-limits-1}
\begin{split}
&a^{\eta}(t,x)\to f_{u}(t,x,u(t,x)),\\
&\tilde{a}^{\eta}(t,x)\to f_{x}(t,x,u(t,x)),\\
&b^{\eta}(t,x)\to\int_{\R}J'(x-y)u(t,y)dy.
\end{split}
\end{equation}
Notice the first two convergence in \eqref{diff-limits-1} need the continuity of $u(t,x)$ in $x$. By \eqref{diff-limits-1}, \eqref{lipschitz-coefficient-1} holds with $a^{\eta}(t,x)$ replaced by $f_{u}(t,x,u(t,x))$, that is,
\begin{equation}\label{lipschitz-coefficient-2}
f_{u}(t,x,u(t,x))\leq\begin{cases}
1-\ka_{0},&\quad t< t_{\rm first}(x),\\
0,&\quad t> t_{\rm last}(x).
\end{cases}
\end{equation}

Now, we prove Theorem \ref{thm-regularity-of-tf}.

\begin{proof}[Proof of Theorem \ref{thm-regularity-of-tf}]

Let us consider \eqref{diff-integral-sol}. We are going to prove the existence of the limit $\lim_{\eta\to0}v^{\eta}(t,x)$. As in the proof of Lemma \ref{lem-tf-continuity}, we assume $t_{0}\ll t_{\rm first}(x)$ and $t>t_{\rm last}(x)$ in the rest of the proof. We treat three terms on the right-hand side of \eqref{diff-integral-sol} separately.

For the second term on the right-hand side of \eqref{diff-integral-sol}, we claim
\begin{equation}\label{diff-claim-1}
\begin{split}
&\int_{t_{0}}^{t}b^{\eta}(\tau,x)e^{-\int_{\tau}^{t}(1-a^{\eta}(s,x))ds}d\tau\to\int_{t_{0}}^{t}\bigg(\int_{\R}J'(x-y)u(\tau,y)dy\bigg)e^{-\int_{\tau}^{t}(1-f_{u}(s,x,u(s,x)))ds}d\tau\\
&\quad\quad\quad\quad\quad\quad\quad\quad\quad\quad\quad\quad\quad\quad\quad\quad\quad\quad\quad\quad \text{as}\,\, \eta\to0\,\,\text{ uniformly in}\,\, t_{0}\ll t_{\rm first}(x).
\end{split}
\end{equation}
To see this, we notice
\begin{equation*}
\begin{split}
&\bigg|\int_{t_{0}}^{t}b^{\eta}(\tau,x)e^{-\int_{\tau}^{t}(1-a^{\eta}(s,x))ds}d\tau-\int_{t_{0}}^{t}\bigg(\int_{\R}J'(x-y)u(\tau,y)dy\bigg)e^{-\int_{\tau}^{t}(1-f_{u}(s,x,u(s,x)))ds}d\tau\bigg|\\
&\quad\quad\leq\int_{-\infty}^{t}\bigg|b^{\eta}(\tau,x)e^{-\int_{\tau}^{t}(1-a^{\eta}(s,x))ds}-\bigg(\int_{\R}J'(x-y)u(\tau,y)dy\bigg)e^{-\int_{\tau}^{t}(1-f_{u}(s,x,u(s,x)))ds}\bigg|d\tau.
\end{split}
\end{equation*}
By \eqref{diff-limits-1}, the integrand converges to $0$ as $\eta\to0$ pointwise. Thus, by dominated convergence theorem, we only need to make sure that the integrand is controlled by some integrable function that is independent of $\eta$. Writing
\begin{equation*}
b^{0}(\tau,x)=\int_{\R}J'(x-y)u(\tau,y)dy\quad\text{and} \quad a^{0}(s,x)=f_{u}(s,x,u(s,x)),
\end{equation*}
we only need to make sure that the function
\begin{equation*}
\tau\mapsto\sup_{0\leq|\eta|\leq\de_{0}}\bigg|b^{\eta}(\tau,x)e^{-\int_{\tau}^{t}(1-a^{\eta}(s,x))ds}\bigg|
\end{equation*}
is integrable over $(-\infty,t]$.

Setting $M:=\sup_{(t,x)\in\R\times\R}\sup_{0\leq|\eta|\leq\de_{0}}|b^{\eta}(t,x)|<\infty$, we have
\begin{equation*}
\sup_{0\leq|\eta|\leq\de_{0}}\bigg|b^{\eta}(\tau,x)e^{-\int_{\tau}^{t}(1-a^{\eta}(s,x))ds}\bigg|\leq M\sup_{0\leq|\eta|\leq\de_{0}}e^{-\int_{\tau}^{t}(1-a^{\eta}(s,x))ds}.
\end{equation*}
To bound the last integral uniformly in $0\leq|\eta|\leq\de_{0}$, according to \eqref{lipschitz-estimates} and \eqref{lipschitz-coefficient-2}, we consider three cases:

\paragraph{\textbf{Case i. $\tau< t_{\rm first}(x)$}} In this case,
\begin{equation*}
\begin{split}
\sup_{0\leq|\eta|\leq\de_{0}}e^{-\int_{\tau}^{t}(1-a^{\eta}(s,x))ds}&=\sup_{0\leq|\eta|\leq\de_{0}}e^{-[\int_{\tau}^{t_{\rm first}(x)}+\int_{t_{\rm first}(x)}^{t_{\rm last}(x)}+\int_{t_{\rm last}(x)}^{t}](1-a^{\eta}(s,x))ds}\\
&\leq e^{-\ka_{0}(t_{\rm first}(x)-\tau)}e^{C_{a}T}e^{-(t-t_{\rm last}(x))};
\end{split}
\end{equation*}

\paragraph{\textbf{Case ii. $\tau\in[t_{\rm first}(x),t_{\rm last}(x)]$}} In this case,
\begin{equation*}
\sup_{0\leq|\eta|\leq\de_{0}}e^{-\int_{\tau}^{t}(1-a^{\eta}(s,x))ds}=\sup_{0\leq|\eta|\leq\de_{0}}e^{-[\int_{\tau}^{t_{\rm last}(x)}+\int_{t_{\rm last}(x)}^{t}](1-a^{\eta}(s,x))ds}\leq e^{C_{a}T}e^{-(t-t_{\rm last}(x))};
\end{equation*}

\paragraph{\textbf{Case iii. $\tau\in(t_{\rm last}(x),t]$}} In this case, $\sup_{0\leq|\eta|\leq\de_{0}}e^{-\int_{\tau}^{t}(1-a^{\eta}(s,x))ds}\leq e^{-(t-\tau)}$.

Thus, setting
\begin{equation*}
h(\tau)=\begin{cases}
e^{-\ka_{0}(t_{\rm first}(x)-\tau)}e^{C_{a}T}e^{-(t-t_{\rm last}(x))},&\quad\tau< t_{\rm first}(x)\\
e^{C_{a}T}e^{-(t-t_{\rm last}(x))},&\quad\tau\in[t_{\rm first}(x),t_{\rm last}(x)]\\
e^{-(t-\tau)},&\quad\tau\in(t_{\rm last}(x),t],
\end{cases}
\end{equation*}
we find for any $\tau\in(-\infty,t]$
\begin{equation*}
\begin{split}
&\sup_{0<|\eta|\leq\de_{0}}\bigg|b^{\eta}(\tau,x)e^{-\int_{\tau}^{t}(1-a^{\eta}(s,x))ds}-\bigg(\int_{\R}J'(x-y)u(\tau,y)dy\bigg)e^{-\int_{\tau}^{t}(1-f_{u}(s,x,u(s,x)))ds}\bigg|\\
&\quad\quad\leq2\sup_{0\leq|\eta|\leq\de_{0}}\bigg|b^{\eta}(\tau,x)e^{-\int_{\tau}^{t}(1-a^{\eta}(s,x))ds}\bigg|\leq 2h(\tau).
\end{split}
\end{equation*}
To show \eqref{diff-claim-1}, it remains to show $\int_{-\infty}^{t}h(\tau)d\tau<\infty$. But, we readily compute
\begin{equation}\label{diff-uniform-bound}
\begin{split}
\int_{-\infty}^{t}h(\tau)d\tau&=\int_{-\infty}^{t_{\rm first}(x)}h(\tau)d\tau+\int_{t_{\rm first}(x)}^{t_{\rm last}(x)}h(\tau)d\tau+\int_{t_{\rm last}(x)}^{t}h(\tau)d\tau\\
&\leq\int_{-\infty}^{t_{\rm first}(x)}e^{-\ka_{0}(t_{\rm first}(x)-\tau)}e^{C_{a}T}e^{-(t-t_{\rm last}(x))}d\tau\\
&\quad\quad+\int_{t_{\rm first}(x)}^{t_{\rm last}(x)}e^{C_{a}T}e^{-(t-t_{\rm last}(x))}d\tau+\int_{t_{\rm last}(x)}^{t}e^{-(t-\tau)}d\tau\\
&\leq\frac{e^{C_{a}T}}{\ka_{0}}+Te^{C_{a}T}+1.
\end{split}
\end{equation}
Thus, we have shown \eqref{diff-claim-1}. Note that the last bound is uniform in $(t,x)\in\R\times\R$.

For the third term on the right hand side of \eqref{diff-integral-sol}, we claim
\begin{equation}\label{diff-claim-3}
\begin{split}
&\int_{t_{0}}^{t}\tilde{a}^{\eta}(\tau,x)e^{-\int_{\tau}^{t}(1-a^{\eta}(s,x))ds}d\tau\to\int_{t_{0}}^{t}f_{x}(t,x,u(t,x))e^{-\int_{\tau}^{t}(1-f_{u}(s,x,u(s,x)))ds}d\tau\\
&\quad\quad\quad\quad\quad\quad\quad\quad\quad\quad\quad\quad\quad\quad\quad\quad\quad\quad\quad\quad \text{as}\,\, \eta\to0\,\,\text{ uniformly in}\,\, t_{0}\ll t_{\rm first}(x).
\end{split}
\end{equation}
The proof of \eqref{diff-claim-3} is similar to that of \eqref{diff-claim-1}. So, we omit it. Notice
\begin{equation}\label{diff-uniform-bound-1}
\begin{split}
&\int_{-\infty}^{t}\bigg|f_{x}(t,x,u(t,x))e^{-\int_{\tau}^{t}(1-f_{u}(s,x,u(s,x)))ds}\bigg|d\tau\\
&\quad\quad\leq\bigg[\sup_{(t,x,u)\in\R\times\R\times[0,1]}|f_{x}(t,x,u)|\bigg]\int_{-\infty}^{t}h(\tau)d\tau\\
&\quad\quad\leq\bigg[\sup_{(t,x,u)\in\R\times\R\times[0,1]}|f_{x}(t,x,u)|\bigg]\bigg(\frac{e^{C_{a}T}}{\ka_{0}}+Te^{C_{a}T}+1\bigg).
\end{split}
\end{equation}

For the first term on the right hand side of \eqref{diff-integral-sol}, we have \eqref{diff-claim-2}, that is,
\begin{equation}\label{diff-claim-4}
\text{with}\quad t_{\rm first}(x)-t_{0}=\frac{1}{|\eta|},\quad v^{\eta}(t_{0},x)e^{-\int_{t_{0}}^{t}(1-a^{\eta}(s,x))ds}\to0\quad\text{as}\quad\eta\to0.
\end{equation}

Hence, choosing $t_{0}$ such that $t_{\rm first}(x)-t_{0}=\frac{1}{|\eta|}$ and passing to the limit $\eta\to0$ in \eqref{diff-integral-sol}, we conclude from \eqref{diff-claim-1}, \eqref{diff-claim-3} and \eqref{diff-claim-4} that
\begin{equation}\label{a-formula-for-ux}
\begin{split}
u_{x}(t,x)&=\lim_{\eta\to0}v^{\eta}(t,x)\\
&=\int_{-\infty}^{t}\bigg[\int_{\R}J'(x-y)u(\tau,y)dy+f_{x}(\tau,x,u(\tau,x))\bigg]e^{-\int_{\tau}^{t}(1-f_{u}(s,x,u(s,x)))ds}d\tau.
\end{split}
\end{equation}
From which, we see that $u_{x}(t,x)$ is continuous in $(t,x)\in\R\times\R$. Moreover, by \eqref{diff-uniform-bound}, \eqref{diff-uniform-bound-1} and \eqref{a-formula-for-ux}, we have $\sup_{(t,x)\in\R\times\R}|u_{x}(t,x)|<\infty$. This completes the proof.
\end{proof}

\begin{rem}\label{rem-a-formula-for-ux}
From \eqref{a-formula-for-ux}, \eqref{diff-uniform-bound} and \eqref{diff-uniform-bound-1}, we see
\begin{equation*}
\sup_{(t,x)\in\R\times\R}|u_{x}(t,x)|\leq\bigg[\|J'\|_{L^{1}(\R)}+\sup_{(t,x,u)\in\R\times\R\times[0,1]}|f_{x}(t,x,u)|\bigg]\bigg(\frac{e^{C_{a}T}}{\ka_{0}}+Te^{C_{a}T}+1\bigg),
\end{equation*}
where $C_{a}$ depends only on $\sup_{(t,x,u)\in\R\times\R\times[0,1]}|f_{u}(t,x,u)|$ and $T$ is controlled by \eqref{growth-time-1}, and hence, $T$ depends only on $f_{B}$ and the shape of $u(t,x)$.
\end{rem}


\section{Proof of Theorem \ref{thm-regularity-of-tf-ii}}\label{sec-proof-2}

This whole section is devoted to the proof of Theorem \ref{thm-regularity-of-tf-ii}. Let $u(t,x)$ be a transition front as in the statement of Theorem \ref{thm-regularity-of-tf-ii}. Hence, there exist $C>0$ and $r>0$ such that
\begin{equation}\label{harnack-type-inequality}
u(t,x)\leq Ce^{r|x-y|}u(t,y),\quad (t,x,y)\in\R\times\R\times\R.
\end{equation}

Let $X(t)$ and $X^{\pm}_{\la}(t)$ be interface location functions of $u(t,x)$, where $X^{\pm}_{\la}(t)$ are given in \eqref{defn-interface-locations}. As in \cite{LiZl14}, for $(t,x)\in\R\times\R$ and $\eta\in\R$ with $0<|\eta|\leq\de_{0}\ll1$, we consider
\begin{equation*}
v^{\eta}(t,x)=\frac{u(t,x+\eta)-u(t,x)}{\eta}\quad\text{and}\quad w^{\eta}(t,x)=\frac{v^{\eta}(t,x)}{u(t,x)}.
\end{equation*}
We readily check $w^{\eta}(t,x)$ satisfies
\begin{equation}\label{eqn-9182}
w^{\eta}_{t}=a_{1}^{\eta}+a_{2}^{\eta}w^{\eta},
\end{equation}
where $a_{1}^{\eta}=\frac{J\ast v^{\eta}}{u}+\frac{\tilde{a}^{\eta}}{u}$ and $a_{2}^{\eta}=-\frac{J\ast u}{u}+a^{\eta}-\frac{f}{u}$ with
\begin{equation}\label{1-term}
\tilde{a}^{\eta}=\frac{f(t,x+\eta,u(t,x+\eta))-f(t,x,u(t,x+\eta))}{\eta}
\end{equation}
and
\begin{equation}\label{2-term}
a^{\eta}=\frac{f(t,x,u(t,x+\eta))-f(t,x,u(t,x))}{u(t,x+\eta)-u(t,x)}.
\end{equation}

To bound the solution of \eqref{eqn-9182}, we first analyze $a_{1}^{\eta}$ and $a_{2}^{\eta}$. For $a^{\eta}_{1}$, we first see
\begin{equation*}
\bigg|\frac{J\ast v^{\eta}}{u}\bigg|=\frac{1}{u(t,x)}\bigg|\int_{\R}\frac{J(x-y+\eta)-J(x-y)}{\eta}u(t,y)dy\bigg|\leq C\int_{\R}\bigg|\frac{J(x+\eta)-J(x)}{\eta}\bigg|e^{r|x|}dx,
\end{equation*}
where we used \eqref{harnack-type-inequality}. Next, setting $C_{1}:=\sup_{(t,x,u)\in\R\times\R\times[0,1]}|f_{xu}(t,x,u)|$, we have
\begin{equation*}
\begin{split}
\bigg|\frac{\tilde{a}^{\eta}}{u}\bigg|\leq\frac{|f_{x}(t,x+\eta_{*},u(t,x+\eta))|}{u(t,x)}\leq C_{1}\frac{u(t,x+\eta)}{u(t,x)}\leq C_{1}Ce^{r|\eta|},
\end{split}
\end{equation*}
where we used Taylor expansion, the fact $f_{x}(t,x,0)=0$ and \eqref{harnack-type-inequality}. Hence,
\begin{equation}\label{a-1}
C_{2}:=\sup_{(t,x)\in\R\times\R}\sup_{0<|\eta|\leq\de_{0}}|a_{1}^{\eta}|<\infty.
\end{equation}

For $a_{2}^{\eta}$, we first see from \eqref{harnack-type-inequality} that
\begin{equation*}
\frac{1}{C}\int_{\R}J(x)e^{-r|x|}dx\leq\frac{J\ast u}{u}\leq C\int_{\R}J(x)e^{r|x|}dx,
\end{equation*}
and thus, setting $C_{3}:=\frac{1}{C}\int_{\R}J(x)e^{-r|x|}dx$ and $C_{4}:=C\int_{\R}J(x)e^{r|x|}dx$, we find
\begin{equation*}
-C_{4}\leq-\frac{J\ast u}{u}\leq-C_{3}.
\end{equation*}
To control the term $a^{\eta}-\frac{f}{u}$ in $a_{2}^{\eta}$, we set
\begin{equation*}
\tilde{\theta}_{0}:=\min\bigg\{\frac{\theta_{1}}{2},\frac{C_{3}}{2}\Big[\sup_{(t,x,u)\in\R\times\R\times[0,1]}|f(t,x,u)|\Big]^{-1}\bigg\},
\end{equation*}
and define
\begin{equation*}
L_{0}=1+\de_{0}+\sup_{t\in\R}|X(t)-X^{+}_{\tilde{\theta}_{0}}(t)|\quad\text{ and}\quad L_{1}=1+\de_{0}+\sup_{t\in\R}|X(t)-X^{-}_{\theta_{1}}(t)|.
\end{equation*}
As in the proof of Theorem \ref{thm-regularity-of-tf}, we set
\begin{equation*}
\begin{split}
I_{l}(t)&=(-\infty,X(t)-L_{1}),\\
I_{m}(t)&=[X(t)-L_{1},X(t)+L_{0}],\\
 I_{r}(t)&=(X(t)+L_{0},\infty)
\end{split}
\end{equation*}
for $t\in\R$, and for any fixed $x\in\R$, define
\begin{equation*}
\begin{split}
t_{\rm first}(x)&=\sup\{\tilde{t}\in\R|x\in I_{r}(t)\,\,\text{for all}\,\,t\leq \tilde{t}\},\\
t_{\rm last}(x)&=\inf\{\tilde{t}\in\R|x\in I_{l}(t)\,\,\text{for all}\,\,t\geq \tilde{t}\}.
\end{split}
\end{equation*}
Then, there hold $T:=\sup_{x\in\R}[t_{\rm last}(x)-t_{\rm first}(x)]<\infty$ and for all $x\in\R$
\begin{equation*}\label{result-from-defn-2}
x\in\begin{cases}
I_{r}(t),&\quad t< t_{\rm first}(x),\\
I_{l}(t),&\quad t> t_{\rm last}(x).
\end{cases}
\end{equation*}

Now, for $0<|\eta|\leq\de_{0}$, we have
\begin{itemize}
\item if $t< t_{\rm first}(x)$, then $x\in I_{r}(t)$, in particular, $x\geq X^{+}_{\tilde{\theta}_{0}}(t)+\de_{0}$, and hence, $u(t,x)\leq\tilde{\theta}_{0}$ and $u(t,x+\eta)\leq\tilde{\theta}_{0}$; it then follows from Taylor expansion that
\begin{equation*}
\bigg|a^{\eta}-\frac{f}{u}\bigg|=|f_{u}(t,x,u_{*})-f_{u}(t,x,u_{**})|\leq|u_{*}-u_{**}|\sup_{(t,x,u)\in\R\times\R\times[0,1]}|f_{uu}(t,x,u)|\leq\frac{C_{3}}{2},
\end{equation*}
where $u_{*}$ is between $u(t,x)$ and $u(t,x+\eta)$, and $u_{**}$ is between $0$ and $u(t,x)$, and hence, both $u_{*}$ and $u_{**}$ are between $0$ and $\tilde{\theta}_{0}$, so $|u_{*}-u_{**}|\leq\tilde{\theta}_{0}$;

\item if $t> t_{\rm last}(x)$, then $x\in I_{l}(x)$, in particular, $x\leq X^{-}_{\theta_{1}}(t)-\de$, and hence, $u(t,x)\geq\theta_{1}$ and $u(t,x+\eta)\geq\theta_{1}$; it then follows from (H2) that $a^{\eta}\leq0$, which leads to $a^{\eta}-\frac{f}{u}\leq0$;

\item if $t\in[t_{\rm first}(x),t_{\rm last}(x)]$, then $|a^{\eta}-\frac{f}{u}|\leq2\sup_{(t,x,u)\in\R\times\R\times[0,1]}|f_{u}(t,x,u)|$.
\end{itemize}
Therefore, we have the following for $a_{2}^{\eta}$: for $0<|\eta|\leq\de_{0}$ and $x\in\R$
\begin{equation}\label{a-2}
a_{2}^{\eta}\leq\begin{cases}
-\frac{C_{3}}{2},\quad& t\leq t_{\rm first}(x),\\
C_{5},\quad & t_{\rm first}(x)\leq t\leq t_{\rm last}(x),\\
-C_{3},\quad & t\geq t_{\rm last}(x),
\end{cases}
\end{equation}
where $C_{5}:=C_{4}+2\sup_{(t,x,u)\in\R\times\R\times[0,1]}|f_{u}(t,x,u)|$.

With the help of \eqref{a-1} and \eqref{a-2}, we are now able to bound the solution of \eqref{eqn-9182}. Notice, the solution of \eqref{eqn-9182} can be written as
\begin{equation}\label{eqn-integral-9812}
w^{\eta}(t,x)=e^{\int_{t_{0}}^{t}a_{2}^{\eta}(s,x)ds}w^{\eta}(t_{0},x)+\int_{t_{0}}^{t}e^{\int_{\tau}^{t}a_{2}^{\eta}(s,x)ds}a_{1}^{\eta}(\tau,x)d\tau,\quad x\in\R,\quad t\geq t_{0}.
\end{equation}

Using \eqref{a-1}, \eqref{a-2} and \eqref{eqn-integral-9812}, we first argue as in the proof of Lemma \ref{lem-tf-continuity} to conclude that
\begin{equation*}
\sup_{(t,x)\in\R\times\R}\sup_{0<|\eta|\leq\de_{0}}\bigg|\frac{u(t,x+\eta)-u(t,x)}{\eta u(t,x)}\bigg|=\sup_{(t,x)\in\R\times\R}\sup_{0<|\eta|\leq\de_{0}}|w^{\eta}(t,x)|<\infty,
\end{equation*}
which in particular implies that $u(t,x)$ is continuous in $x$, since $u(t,x)\in(0,1)$.

Now, we set $a^{0}_{1}=\frac{J'\ast u}{u}+f_{x}$ and $a_{2}^{0}=-\frac{J\ast u}{u}+f_{u}-\frac{f}{u}$. Using the continuity of $u(t,x)$ in $x$, we have the pointwise limits $a^{\eta}_{1}\to a^{0}_{1}$
and $a^{\eta}_{2}\to a^{0}_{2}$ as $\eta\to0$. Then, using \eqref{a-1} and \eqref{a-2}, we can show that as \eqref{diff-claim-1},
\begin{equation*}
\int_{t_{0}}^{t}e^{\int_{\tau}^{t}a_{2}^{\eta}(s,x)ds}a_{1}^{\eta}(\tau,x)d\tau\to\int_{t_{0}}^{t}e^{\int_{\tau}^{t}a_{2}^{0}(s,x)ds}a_{1}^{0}(\tau,x)d\tau
\end{equation*}
uniformly in $t_{0}\ll t_{\rm first}(x)$ as $\eta\to0$, and as \eqref{diff-claim-2},
\begin{equation*}
\text{with}\,\,t_{\rm first}(x)-t_{0}=\frac{1}{|\eta|},\quad e^{\int_{t_{0}}^{t}a_{2}^{\eta}(s,x)ds}w^{\eta}(t_{0},x)\to0\quad\text{as}\quad\eta\to0.
\end{equation*}
Hence, setting $t_{\rm first}(x)-t_{0}=\frac{1}{|\eta|}$ in \eqref{eqn-integral-9812} and passing to the limit $\eta\to0$, we find
\begin{equation*}
\frac{u_{x}(t,x)}{u(t,x)}=\lim_{\eta\to0}w^{\eta}(t,x)=\int_{-\infty}^{t}e^{\int_{\tau}^{t}a_{2}^{0}(s,x)ds}a_{1}^{0}(\tau,x)d\tau,\quad (t,x)\in\R\times\R.
\end{equation*}
This completes the proof.


\section{Proof of Theorem \ref{thm-regularity-of-tf-iii}}\label{sec-proof-iafjiaf}

We prove Theorem \ref{thm-regularity-of-tf-iii} in this section. Throughout this section, we assume (H1) and (H2). To prove Theorem \ref{thm-regularity-of-tf-iii}, we need the following three lemmas. For $r>0$, let
\begin{equation*}
\Ga_{r}(x)=\min\{1,e^{-rx}\}=\begin{cases}
1,\quad&x\leq0,\\
e^{-rx},\quad&x\geq0.
\end{cases}
\end{equation*}

\begin{lem}\label{lem-090-1}
There exist two continuous functions $M:(0,\infty)\to(0,\infty)$ and $\ga:(0,\infty)\to(0,\infty)$ with $\ga(r)\to0$ such that
\begin{equation*}
\bigg|\frac{[J\ast\Ga_{r}](x)}{\Ga_{r}(x)}-1\bigg|\leq\ga(r),\quad x\geq M(r)
\end{equation*}
for all $r>0$.
\end{lem}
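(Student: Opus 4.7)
The plan is to split the convolution $[J \ast \Gamma_r](x)$ at $y = x$, where $\Gamma_r(x-y)$ transitions from $e^{-r(x-y)}$ to $1$. Restricting attention to $x \geq 0$ (so $\Gamma_r(x) = e^{-rx}$) and using $1 = \int_\R J(y)\, dy$, I expect to arrive at the working identity
\begin{equation*}
\frac{[J \ast \Gamma_r](x)}{\Gamma_r(x)} - 1 \;=\; \int_{-\infty}^{x} J(y)(e^{ry} - 1)\, dy \;+\; (e^{rx} - 1)\int_{x}^{\infty} J(y)\, dy,
\end{equation*}
which reduces the lemma to two independent tail estimates.

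For the first integral, I would bound it, uniformly in $x$, by
\begin{equation*}
\gamma_1(r) := \int_\R J(y)\,|e^{ry} - 1|\, dy.
\end{equation*}
For $r \in (0,1]$ one has $|e^{ry}-1| \leq 1 + e^{|y|}$, and $J(y)(1 + e^{|y|})$ is integrable by (H1); dominated convergence then yields $\gamma_1(r) \to 0$ as $r \to 0^{+}$, and an analogous argument shows $\gamma_1$ is continuous on $(0,\infty)$.

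For the second integral, I would combine $e^{rx} \leq e^{ry}$ for $y \geq x$ with the factorization $e^{ry} = e^{(r+1)y} e^{-y} \leq e^{-x} e^{(r+1)y}$ valid on $[x, \infty)$ when $x \geq 0$, to obtain
\begin{equation*}
(e^{rx} - 1)\int_x^\infty J(y)\, dy \;\leq\; \int_x^\infty J(y)\, e^{ry}\, dy \;\leq\; e^{-x}\int_\R J(y)\, e^{(r+1)y}\, dy.
\end{equation*}
By (H1) the last integral is finite and depends continuously on $r$, so setting
\begin{equation*}
M(r) := \max\bigl\{1,\; \log(1/r) + \log\!\int_\R J(y)\, e^{(r+1)y}\, dy\bigr\}
\end{equation*}
produces a positive continuous function of $r$ such that the tail is at most $r$ whenever $x \geq M(r)$. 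Taking $\gamma(r) := \gamma_1(r) + r$ then delivers the claimed estimate, with $\gamma$ continuous, positive, and $\gamma(r) \to 0$ as $r \to 0^{+}$.

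The argument is essentially a Laplace-transform style calculation and presents no real obstacle; the only point deserving mild care is the continuous dependence on $r$ of both $M$ and $\gamma_1$, which follows from dominated convergence applied to the moment integrals guaranteed by (H1).
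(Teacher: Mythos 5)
Your proof is correct and follows essentially the same route as the paper: both split the convolution at the kink of $\Gamma_r$, control one piece by the exponential-moment deficit $\int_\R J(y)(e^{ry}-1)\,dy\to 0$ as $r\to 0^{+}$, and control the other by the exponential tail of $J$ as $x\to\infty$ using (H1). Your version is merely more explicit, supplying concrete formulas for $M(r)$ and $\gamma(r)$ where the paper argues qualitatively.
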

\begin{proof}
Fix $r>0$ and write $\Ga=\Ga_{r}$. We see
\begin{equation*}
\begin{split}
\frac{[J\ast\Ga](x)}{\Ga(x)}-1&=\int_{-\infty}^{0}J(x-y)\frac{\Ga(y)}{\Ga(x)}dy+\int_{0}^{\infty}J(x-y)\frac{\Ga(y)}{\Ga(x)}dy-1\\
&=\int_{-\infty}^{0}J(x-y)\frac{\Ga(y)}{\Ga(x)}dy+\int_{0}^{\infty}J(x-y)e^{r(x-y)}dy-1\\
&=\int_{-\infty}^{0}J(x-y)\frac{\Ga(y)}{\Ga(x)}dy+\bigg[\int_{\R}J(y)e^{ry}dy-1\bigg]-\int_{x}^{\infty}J(y)e^{ry}dy.
\end{split}
\end{equation*}
Due to the decay of $J$ at $\pm\infty$, it is not hard to see that $\lim_{x\to\infty}\int_{-\infty}^{0}J(x-y)\frac{\Ga(y)}{\Ga(x)}dy=0$. In fact, for all large $x$,
\begin{equation*}
\int_{-\infty}^{0}J(x-y)\frac{\Ga(y)}{\Ga(x)}dy=\int_{-\infty}^{0}J(x-y)e^{rx}dy\leq\int_{-\infty}^{0}J(x-y)e^{r(x-y)}dy\to0\,\,\text{as}\,\,x\to\infty.
\end{equation*}
Since $\int_{\R}J(y)dy=1$, we find $\lim_{r\to0}\int_{\R}J(y)e^{ry}dy=1$ by dominated convergence theorem. Clearly, $\lim_{x\to\infty}\int_{x}^{\infty}J(y)e^{ry}dy=0$. The lemma then follows.
\end{proof}

\begin{lem}\label{lem-090-2}
There exists $r_{0}>0$ such that if $r\in(0,r_{0}]$ is such that
\begin{equation}\label{exact-decay-condition}
\lim_{x\to\infty}\frac{u(t,x+X(t))}{e^{-rx}}=1\quad\text{uniformly in}\quad t\in\R,
\end{equation}
then, there exists $M(r)>0$ such that
\begin{equation}\label{result-1}
\frac{1}{2}\leq\frac{[J\ast u(t,\cdot+X(t))](x)}{u(t,x+X(t))}\leq\frac{3}{2},\quad x\geq M(r)\quad \text{and}\quad t\in\R,
\end{equation}
and
\begin{equation}\label{result-2}
\frac{|[J'\ast u(t,\cdot+X(t))](x)|}{u(t,x+X(t))}\leq2,\quad x\geq M(r)\quad \text{and}\quad t\in\R.
\end{equation}
\end{lem}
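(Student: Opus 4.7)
The plan is to use the exact decay hypothesis \eqref{exact-decay-condition} to compare the shifted front $\tilde{u}(t,x) := u(t, x + X(t))$ with the pure exponential $e^{-rx}$ on a region $[M_0, \infty)$, and to control the tail contribution from $y \leq M_0$ using the exponential moments of $J$ and $J'$ guaranteed by (H1). The ratios appearing in \eqref{result-1} and \eqref{result-2} will then be close to $\int_\R J(w) e^{rw}\, dw$ and $\int_\R J'(w) e^{rw}\, dw$, respectively; the first tends to $1$ as $r \to 0$, while the second equals $-r \int_\R J(w) e^{rw}\, dw$ by integration by parts and is therefore $O(r)$.

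First, given $\epsilon \in (0, 1/2)$, the uniform-in-$t$ convergence in \eqref{exact-decay-condition} yields $M_0 = M_0(r, \epsilon) > 0$ such that $(1-\epsilon) e^{-rx} \leq \tilde{u}(t, x) \leq (1+\epsilon) e^{-rx}$ for all $x \geq M_0$ and all $t \in \R$. For \eqref{result-1} I would split
\begin{equation*}
[J * \tilde{u}(t,\cdot)](x) = \int_{-\infty}^{M_0} J(x-y)\tilde{u}(t,y)\, dy + \int_{M_0}^{\infty} J(x-y)\tilde{u}(t,y)\, dy,
\end{equation*}
bound the first integral by $\int_{x-M_0}^{\infty} J(z)\, dz$, which by (H1) is $o(e^{-rx})$ uniformly in $t$, and sandwich the second integral using the two-sided bound on $\tilde{u}$. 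The substitution $w = x - y$ produces $\int_{M_0}^{\infty} J(x-y) e^{-ry}\, dy = e^{-rx} \int_{-\infty}^{x-M_0} J(w) e^{rw}\, dw$, which tends to $e^{-rx} \int_\R J(w) e^{rw}\, dw$ by dominated convergence. Dividing through by $\tilde{u}(t,x)$ yields, uniformly in $t$,
\begin{equation*}
\frac{1-\epsilon}{1+\epsilon} \int_\R J(w) e^{rw}\, dw - o(1) \leq \frac{[J * \tilde{u}(t,\cdot)](x)}{\tilde{u}(t,x)} \leq \frac{1+\epsilon}{1-\epsilon} \int_\R J(w) e^{rw}\, dw + o(1).
\end{equation*}
Since $\int_\R J(w) e^{rw}\, dw \to 1$ as $r \to 0$, I would choose $r_0$ small first, then $\epsilon$ small, then $M = M(r)$ large, to force the ratio into $[1/2, 3/2]$.

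For \eqref{result-2} the same decomposition applied with $J'$ in place of $J$ gives a negligible tail from $y \leq M_0$ (using $\int_\R |J'(w)| e^{rw}\, dw < \infty$ from (H1)). For the main term, writing $\tilde{u}(t,y) = e^{-ry} + (\tilde{u}(t,y) - e^{-ry})$ on $y \geq M_0$ produces $e^{-rx} \int_\R J'(w) e^{rw}\, dw$ plus an error bounded by $\epsilon \, e^{-rx} \int_\R |J'(w)| e^{rw}\, dw$. Integration by parts, justified since (H1) forces $J(w) e^{rw} \to 0$ as $|w| \to \infty$, identifies $\int_\R J'(w) e^{rw}\, dw = -r \int_\R J(w) e^{rw}\, dw$, hence this term is $O(r)$. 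Dividing by $\tilde{u}(t,x) \geq (1-\epsilon) e^{-rx}$, the ratio is bounded by $\tfrac{1}{1-\epsilon}\bigl[r \int_\R J(w) e^{rw}\, dw + \epsilon \int_\R |J'(w)| e^{rw}\, dw\bigr] + o(1)$, which can be made $\leq 2$ by shrinking $r_0$ for the first term, then $\epsilon$ for the second, then enlarging $M(r)$ to absorb the $o(1)$. The main obstacle is keeping the choices $(r_0, \epsilon, M(r))$ consistent so that both \eqref{result-1} and \eqref{result-2} hold simultaneously under a single threshold $M(r)$ valid for all $t \in \R$; the \emph{uniformity} in $t$ in \eqref{exact-decay-condition} is precisely what makes this possible.
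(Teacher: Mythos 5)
Your proof is correct. It rests on the same three ingredients as the paper's argument --- the uniform exact decay \eqref{exact-decay-condition} to replace $u(t,\cdot+X(t))$ by $e^{-r\cdot}$ on a half-line $[M_{0},\infty)$, the exponential moments in (H1) to make the contribution from $y\leq M_{0}$ of order $o(e^{-rx})$, and the limit $\int_{\R}J(w)e^{rw}\,dw\to1$ as $r\to0$ to fix $r_{0}$ --- but it is packaged differently. The paper first proves Lemma \ref{lem-090-1} for the auxiliary profile $\Ga_{r}=\min\{1,e^{-rx}\}$ and then transfers the estimate to $u$ via the multiplicative identity $\frac{J\ast u}{u}-1=[\frac{J\ast\Ga}{\Ga}-1][\frac{J\ast(u-\Ga)}{J\ast\Ga}+1]\frac{\Ga}{u}+\frac{J\ast(u-\Ga)}{J\ast\Ga}\frac{\Ga}{u}+[\frac{\Ga}{u}-1]$, which requires a separate control of $\frac{J\ast(u-\Ga)}{J\ast\Ga}$; you split the convolution additively at $M_{0}$ and compute directly, bypassing both the auxiliary lemma and that ratio. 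The more substantive divergence is in \eqref{result-2}: the paper routes the estimate through $(|J'|\ast\Ga)/\Ga$, whose large-$x$ limit is $\int_{\R}|J'(w)|e^{rw}\,dw$, a constant depending on $J$ that is not visibly $\leq2$, whereas your integration by parts $\int_{\R}J'(w)e^{rw}\,dw=-r\int_{\R}J(w)e^{rw}\,dw$ exploits the cancellation in $J'$ and produces a bound of size $O(r)+O(\ep)$, which genuinely delivers the stated constant (indeed any prescribed constant). The only step you assert without justification is $J(w)e^{rw}\to0$ as $|w|\to\infty$; this does follow from (H1), since the limit exists because $\int_{\R}(|J'(w)|+rJ(w))e^{rw}\,dw<\infty$ and a nonzero limit would contradict $\int_{\R}J(w)e^{rw}\,dw<\infty$, so the omission is cosmetic. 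Your quantifier ordering ($r_{0}$ depending only on $J$, then a universal $\ep=\ep(J)$, then $M(r)\geq M_{0}(\ep,r)$ absorbing all $o(1)$ terms for both estimates simultaneously) is consistent with the statement of the lemma.
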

\begin{proof}
Let $u=u(t,x+X(t))$. For $r\in(0,r_{0}]$, where $r_{0}$ is to be chosen, we let $\Ga=\Ga_{r}$. Write
\begin{equation*}
\begin{split}
\frac{J\ast u}{u}-1&=\frac{J\ast\Ga}{\Ga}\bigg[\frac{J\ast(u-\Ga)}{J\ast\Ga}+1\bigg]\frac{\Ga}{u}-1\\
&=\bigg[\frac{J\ast\Ga}{\Ga}-1\bigg]\bigg[\frac{J\ast(u-\Ga)}{J\ast\Ga}+1\bigg]\frac{\Ga}{u}+\frac{J\ast(u-\Ga)}{J\ast\Ga}\frac{\Ga}{u}+\bigg[\frac{\Ga}{u}-1\bigg].
\end{split}
\end{equation*}
By Lemma \ref{lem-090-1} and \eqref{exact-decay-condition}, we only need to treat the term $\frac{J\ast(u-\Ga)}{J\ast\Ga}$ for large $x$.

By \eqref{exact-decay-condition}, for any $\ep>0$, there exists $M(\ep,r)>0$ such that
\begin{equation*}
|u(t,x+X(t))-e^{-rx}|\leq\ep e^{-rx},\quad x\geq M(\ep,r)\quad \text{and}\quad t\in\R.
\end{equation*}
Then, for $\ep>0$, we have
\begin{equation*}
\begin{split}
\bigg|\frac{J\ast(u-\Ga)}{J\ast\Ga}\bigg|&=\frac{\Ga}{J\ast\Ga}\bigg|\frac{J\ast(u-\Ga)}{\Ga}\bigg|\\
&\leq\frac{\Ga}{J\ast\Ga}\bigg[\int_{-\infty}^{M(\ep,r)}J(x-y)\frac{|u(t,y+X(t))-\Ga(y)|}{\Ga(x)}dy\\
&\quad\quad\quad\quad\quad+\int_{M(\ep,r)}^{\infty}J(x-y)\frac{|u(t,y+X(t))-e^{-ry}|}{e^{-rx}}dy\bigg]\\
&\leq\frac{\Ga}{J\ast\Ga}\bigg[\int_{-\infty}^{M(\ep,r)}J(x-y)\frac{|u(t,y+X(t))-\Ga(y)|}{\Ga(x)}dy+\ep\int_{M(\ep,r)}^{\infty}J(x-y)e^{r(x-y)}dy\bigg]\\
&\leq\frac{\Ga}{J\ast\Ga}\bigg[\int_{-\infty}^{M(\ep,r)}J(x-y)\frac{|u(t,y+X(t))-\Ga(y)|}{\Ga(x)}dy+\ep\int_{\R}J(y)e^{ry}dy\bigg]
\end{split}
\end{equation*}
Due to the decay of $J$ at $\pm\infty$, we have
\begin{equation*}
\lim_{x\to\infty}\int_{-\infty}^{M(\ep,r)}J(x-y)\frac{|u(t,y+X(t))-\Ga(y)|}{\Ga(x)}dy=0\quad\text{uniformly in}\quad t\in\R.
\end{equation*}
It then follows from Lemma \ref{lem-090-1} that for any $\ep>0$, there exists $\tilde{M}(\ep,r)>0$ such that
\begin{equation*}
\bigg|\frac{J\ast(u-\Ga)}{J\ast\Ga}\bigg|\leq\tilde{\ga}(r)\bigg[\frac{\ep}{2}+\ep\int_{\R}J(y)e^{ry}dy\bigg],\quad x\geq\tilde{M}(\ep,r)\quad\text{and}\quad t\in\R,
\end{equation*}
where $\tilde{\ga}(r)\to0$ as $r\to0$. The result \eqref{result-1} follows by first fixing $\ep$, say $\ep=\frac{1}{100}$, and then choosing $r_{0}$ small so that if $r\in(0,r_{0}]$, then $|\frac{J\ast u}{u}-1|\leq\frac{1}{2}$ for all large $x$ depending on $r$. Note that the term $\frac{J\ast\Ga}{\Ga}-1$ and Lemma \ref{lem-090-1} restrict $r_{0}$. Similar arguments lead to \eqref{result-2}, since
\begin{equation*}
\bigg|\frac{J'\ast u}{u}\bigg|\leq\frac{|J'|\ast\Ga}{\Ga}\bigg[\frac{|J'|\ast(u-\Ga)}{|J'|\ast\Ga}+1\bigg]\frac{\Ga}{u}.
\end{equation*}
This completes the proof.
\end{proof}

\begin{lem}\label{lem-away-from-0}
Let $u(t,x)$ be an arbitrary transition front of \eqref{main-eqn} with interface location function $X(t)$. Then, there holds
\begin{equation*}
\forall L>0,\quad \inf_{t\in\R}\inf_{x\leq L+X(t)}u(t,x)>0.
\end{equation*}
\end{lem}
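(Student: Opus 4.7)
Fix $L>0$. The plan combines three ingredients: the uniform limit $\lim_{x\to-\infty}u(t,x+X(t))=1$; the forward propagation estimate of Theorem \ref{lem-propagation-estimate}, whose speed $c_1$ coincides with the bistable wave speed $c_B$; and comparison with the bistable traveling wave $\phi_B$ via $f\geq f_B$.

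The uniform limit together with Lemma \ref{lem-bounded-interface-width}, applied with any fixed $\la\in(\theta,1)$ (where $\theta$ is the middle zero of $f_B$), yields $\tilde C_\la>0$ such that $u(t,y)\geq\la$ for $y\leq X^-_\la(t)$ and $X^-_\la(t)\geq X(t)-\tilde C_\la$ uniformly in $t$. This handles the leftmost region, so it suffices to bound $u(t_0,x_0)$ uniformly from below when $x_0\in(X(t_0)-\tilde C_\la,X(t_0)+L]$. For such $(t_0,x_0)$ and any $s<t_0$, the step-like datum $y\mapsto u_0(y-X^-_\la(s))$ (with $u_0$ as in the proof of Theorem \ref{lem-propagation-estimate}) is dominated by $u(s,\cdot)$, and comparison with the bistable equation combined with the asymptotic stability of $\phi_B$ gives
\begin{equation*}
u(t_0,x_0)\geq \phi_B\bigl(x_0-X^-_\la(s)-x_B-c_B(t_0-s)\bigr)-q_B e^{-\om_B(t_0-s)}.
\end{equation*}
Choosing $s=t_0-T$ with $T=T(L)$ sufficiently large makes the exponential remainder negligible; the key point is that the matching $c_1=c_B$ in Theorem \ref{lem-propagation-estimate} keeps the drift $-X^-_\la(s)-c_B(t_0-s)$ in the $\phi_B$ argument from being uncontrolled as $s\to-\infty$. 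This yields a positive uniform lower bound on $u$ over an initial strip $\{x_0\leq X(t_0)+L_0\}$, with $L_0$ depending only on the intrinsic constants.

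For $L>L_0$, the lower bound is extended by iteration. Once $u(t,x)\geq c>0$ on $\{x\leq X(t)+L'\}$, for $x\in(X(t)+L',X(t)+L'+\de]$ one has $J\ast u(t,x)\geq c\int_\de^\infty J(z)\,dz>0$, valid for $\de>0$ small by the symmetry and nontriviality of $J$. Combining this convolution lower bound with the equation $u_t=J\ast u-u+f$ and a short time-integration supported by Theorem \ref{lem-propagation-estimate} (which guarantees that every such $x$ eventually enters the inductive strip $\{y\leq X(t)+L'\}$ as $t$ grows) upgrades to a positive uniform lower bound of $u$ on the enlarged strip $\{x\leq X(t)+L'+\de\}$; finitely many iterations cover $\{x\leq X(t)+L\}$. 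The main technical obstacle is the bistable step: the forward propagation estimate provides only an upper bound on $X^-_\la(s)$, so controlling the $\phi_B$ argument uniformly in $s\leq t_0$ crucially exploits the equality $c_1=c_B$; the iteration step size $\de$ must also be tuned to the tails of $J$ to preserve positivity of the lower bound at each stage.
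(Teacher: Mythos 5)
There is a genuine gap in Step 2 (and it recurs in the iteration step). To make $\phi_B\bigl(x_0-X^-_\la(s)-x_B-c_B(t_0-s)\bigr)$ bounded away from $0$ for $x_0\leq X(t_0)+L$ and $s=t_0-T$, you need the argument of $\phi_B$ bounded \emph{above}, i.e.\ you need $X(t_0)-X^-_\la(t_0-T)\leq c_BT+\mathrm{const}$ --- an \emph{upper} bound on how far the interface can advance in the fixed time $T$. Theorem \ref{lem-propagation-estimate} supplies only the \emph{lower} bound $X(t)-X(t_0)\geq c_1(t-t_0-T_1)$; it cannot prevent $X(t_0)$ from running far ahead of the subsolution seeded at time $t_0-T$ (a priori $X$ may jump, as the paper emphasizes, and the upper bound \eqref{upper-avg-speed} is an extra hypothesis of Theorem \ref{thm-modified-interface-location}, not available here). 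Your appeal to ``the matching $c_1=c_B$'' does not close this: the theorem's conclusion is not stated with $c_1=c_B$, and even if it were, the inequality points in the wrong direction. The same obstruction reappears in your iteration: to integrate $u_t\geq J\ast u-u+f$ over a short backward time window at a point $x\in(X(t)+L',X(t)+L'+\de]$, you must know that $x-X(s)$ stays controlled for $s$ slightly less than $t$, which is again an upper bound on the forward motion of $X$.

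The paper closes exactly this gap by a comparison from \emph{above}: since $u(t-T,\cdot)\leq u_0^M(\cdot-X^+_{\la_2}(t-T))$ and $f\leq f_M$, the solution of the homogeneous monostable equation dominates $u$, and its $\tfrac12$-level set moves only a bounded distance $\xi^M_{1/2}(T)$ in the fixed time $T$; this pins $X^+_{1/2}(t)$, hence $X(t)$, to within a constant of $X^+_{\la_2}(t-T)$ and therefore of $X^-_{\la_1}(t-T)$ (inequality \eqref{a-key-step-9828318317}). That two-sided sandwich is the heart of the proof and is absent from your plan. A further structural difference: the paper needs no iteration in $L$, because for the homogeneous bistable flow started from a left step, positivity spreads instantaneously, so the level set $\xi^B_\la(T)$ of $u_B(T,\cdot;u_0^B)$ tends to $+\infty$ as $\la\to0^+$; choosing $\la$ small in a single fixed-time comparison already covers $\{x\leq X(t)+L\}$ for every $L$. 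If you add the monostable upper comparison to obtain the fixed-time control of $X$, your Step 2 can be repaired, and the iteration in Step 3 becomes unnecessary.
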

\begin{proof}
Fix $L>0$ and $x_{*}>0$. Let $\la_{1}\in(\frac{1}{2},1)$ and $\la_{2}\in(0,\frac{1}{10})$. We define $u_{0}^{B}:\R\to[0,1]$ and $u_{0}^{M}:\R\to[0,1]$ by setting
\begin{equation*}
u_{0}^{B}(x)=\begin{cases}
\la_{1},\quad &x\leq-x_{*},\\
-\frac{\la_{1}}{x_{*}}x,\quad& x\in[-x_{*},0],\\
0,\quad &x\geq0
\end{cases}
\quad\text{and}\quad
u_{0}^{M}(x)=\begin{cases}
1,\quad &x\leq0,\\
\frac{\la_{2}-1}{x_{*}}x+1,\quad&x\in[0,x_{*}],\\
\la_{2},\quad &x\geq x_{*}.
\end{cases}
\end{equation*}
Clearly, $u_{0}^{B}(\cdot-X^{-}_{\la_{1}}(t))\leq u(t,\cdot)\leq u_{0}^{M}(\cdot-X^{+}_{\la_{2}}(t))$ for all $t\in\R$. Now, denote by $u_{B}(t,x;u_{0}^{B})$ and $u_{M}(t,x;u_{0}^{M})$ the solutions of $u_{t}=J\ast u-u+f_{B}(u)$ and $u_{t}=J\ast u-u+f_{M}(u)$, respectively, with initial data $u_{B}(0,\cdot;u_{0}^{B})=u_{0}^{B}$ and $u_{M}(0,\cdot;u_{0}^{M})=u_{0}^{M}$. It then follows from comparison principle and homogeneity that
\begin{equation}\label{some-result-8472828373}
u_{B}(T,x-X^{-}_{\la_{1}}(t-T);u_{0}^{B})\leq u(t,x)\leq u_{M}(T,x-X^{+}_{\la_{2}}(t-T);u_{0}^{M}),\quad (t,x)\in\R\times\R
\end{equation}
for all $T\geq0$. Now, we consider a small $T$ and let $\la>0$ be small. Let $\xi_{\la}^{B}(T)$ be such that $u_{B}(T,\xi_{\la}^{B}(T);u_{0}^{B})=\la$. We then see from the first inequality in \eqref{some-result-8472828373} that if $x\leq\xi^{B}_{\la}(T)+X^{-}_{\la_{1}}(t-T)-1$, then the monotonicity of $u_{B}(t,x;u_{0}^{B})$ in $x$ yields
\begin{equation*}
u(t,x)\geq u_{B}(T,\xi_{\la}^{B}(T)-1;u_{0}^{B})>u_{B}(T,\xi_{\la}^{B}(T);u_{0}^{B})=\la,
\end{equation*}
which then leads to
\begin{equation*}
X^{-}_{\la}(t)\geq\xi^{B}_{\la}(T)+X^{-}_{\la_{1}}(t-T)-1,\quad t\in\R.
\end{equation*}
Note that if we can find some $C>0$ such that
\begin{equation}\label{a-key-step-9828318317}
X^{-}_{\la_{1}}(t-T)\geq X(t)-C,\quad t\in\R,
\end{equation}
then we can make $\la$ closer to $0$ such that $\xi^{B}_{\la}(T)$ is so large that $X_{\la}^{-}(t)\geq L+X(t)+1$ for all $t\in\R$, which then leads to
\begin{equation*}
\inf_{t\in\R}\inf_{x\leq L+X(t)}u(t,x)\geq \inf_{t\in\R}\inf_{x\leq X_{\la}^{-}(t)-1}u(t,x)>\la>0.
\end{equation*}
Hence, to finish the proof, we only need to show \eqref{a-key-step-9828318317}.

We now show \eqref{a-key-step-9828318317}. Let us look at the interface locations for $u(t,x)$ and $u_{M}(t,x;u_{0}^{M})$ at $\frac{1}{2}$. From the second inequality in \eqref{some-result-8472828373}, we see
\begin{equation}\label{some-inequalities-198314717419741}
 X^{+}_{\frac{1}{2}}(t)\leq \xi^{M}_{\frac{1}{2}}(T)+X^{+}_{\la_{2}}(t-T)+1,\quad t\in\R,
\end{equation}
where $\xi^{M}_{\frac{1}{2}}(T)$ is such that $u(T,\xi^{M}_{\frac{1}{2}}(T);u_{0}^{M})=\frac{1}{2}$. Notice choosing $T$ or $\la_{2}$ smaller, we can guarantee that $\xi^{M}_{\frac{1}{2}}(T)$ is well-defined. We then deduce from \eqref{some-inequalities-198314717419741} that
\begin{equation*}
\begin{split}
X_{\la_{1}}^{-}(t-T)&\geq X_{\la_{2}}^{+}(t-T)-C_{1}\\
&\geq X^{+}_{\frac{1}{2}}(t)-\xi_{\frac{1}{2}}^{M}(T)-1-C_{1}\\
&\geq X(t)-C_{2}-\xi_{\frac{1}{2}}^{M}(T)-1-C_{1}
\end{split}
\end{equation*}
for all $t\in\R$, where $C_{1}=\sup_{t\in\R}|X_{\la_{1}}^{-}(t)-X_{\la_{2}}^{+}(t)|$ and $C_{2}=\sup_{t\in\R}|X_{\frac{1}{2}}^{+}(t)-X(t)|$. Setting $C=C_{2}+\xi_{\frac{1}{2}}^{M}(T)+1+C_{1}$, we find \eqref{a-key-step-9828318317}, and hence, the lemma follows.
\end{proof}

We are ready to prove Theorem \ref{thm-regularity-of-tf-iii}.

\begin{proof}[Proof of Theorem \ref{thm-regularity-of-tf-iii}]
Let $r_{0}>0$ be as in Lemma \ref{lem-090-2} and fix $r\in(0,r_{0}]$. Let $u(t,x)$ be an arbitrary transition front of \eqref{main-eqn} satisfying
\begin{equation}\label{exact-decay-condition-134141}
\lim_{x\to\infty}\frac{u(t,x+X(t))}{e^{-rx}}=1\quad\text{uniformly in}\quad t\in\R.
\end{equation}

To prove the theorem, we proceed as in the proof of Theorem \ref{thm-regularity-of-tf-ii}. Thus, we only need to bound
$a_{1}^{\eta}=\frac{J\ast v^{\eta}}{u}+\frac{\tilde{a}^{\eta}}{u}$
as in \eqref{a-1} and estimate $a_{2}^{\eta}=-\frac{J\ast u}{u}+a^{\eta}-\frac{f}{u}$ as in \eqref{a-2}, where $\tilde{a}^{\eta}$ and $a^{\eta}$ are given in \eqref{1-term} and \eqref{2-term}, respectively.

For $a_{1}^{\eta}$, we have
\begin{equation*}
|a_{1}^{\eta}|\leq\bigg|\frac{J\ast v^{\eta}}{u}\bigg|+\bigg|\frac{\tilde{a}^{\eta}}{u}\bigg|\leq\frac{1}{u(t,x)}\int_{\R}\frac{|J(x-y+\eta)-J(x-y)|}{\eta}u(t,y)dy+C_{1}\frac{u(t,x+\eta)}{u(t,x)},
\end{equation*}
where $C_{1}=\sup_{(t,x,u)\in\R\times\R\times[0,1]}|f_{xu}(t,x,u)|$. For a sufficiently large $M_{1}>0$, we see from \eqref{result-2} and \eqref{exact-decay-condition-134141} that
\begin{equation*}
\sup_{t\in\R}\sup_{x\geq M_{1}+X(t)}\sup_{0<|\eta|\leq\de_{0}}|a_{1}^{\eta}|<\infty.
\end{equation*}
Since $\inf_{t\in\R}\inf_{x\leq M_{1}+X(t)}u(t,x)>0$ by Lemma \ref{lem-away-from-0}, we have
\begin{equation*}
\sup_{t\in\R}\sup_{x\leq M_{1}+X(t)}\sup_{0<|\eta|\leq\de_{0}}|a_{1}^{\eta}|<\infty.
\end{equation*}
Hence, we obtain
\begin{equation*}
\sup_{(t,x)\in\R\times\R}\sup_{0<|\eta|\leq\de_{0}}|a_{1}^{\eta}|<\infty.
\end{equation*}

For $a^{\eta}_{2}$, we first see from \eqref{result-1} that we can find a sufficiently large $M_{2}>0$ such that
\begin{equation*}
\frac{1}{2}\leq\frac{J\ast u}{u}\leq\frac{3}{2},\quad x\geq M_{2}+X(t)\quad\text{and}\quad t\in\R.
\end{equation*}
Since $\inf_{t\in\R}\inf_{x\leq M_{2}+X(t)}u(t,x)>0$ by Lemma \ref{lem-away-from-0}, there exist $C_{1}>0$ and $C_{2}>0$ such that
\begin{equation*}
C_{1}\leq\frac{J\ast u}{u}\leq C_{2},\quad x\leq M_{2}+X(t)\quad\text{and}\quad t\in\R.
\end{equation*}
Then, setting $C_{3}=\min\{\frac{1}{2},C_{1}\}$ and $C_{4}=\max\{\frac{3}{2},C_{2}\}$, we have
\begin{equation*}
-C_{4}\leq-\frac{J\ast u}{u}\leq-C_{3},\quad(t,x)\in\R\times\R.
\end{equation*}
We then follow the arguments as in the proof of Theorem \ref{thm-regularity-of-tf-ii} to conclude an estimate for $a_{2}^{\eta}$ as in \eqref{a-2}.

The rest of the proof can be done along the same line as in the proof of Theorem \ref{thm-regularity-of-tf-ii} and then we complete the proof.
\end{proof}


\section{Proof of Theorem \ref{thm-modified-interface-location} and Corollaries \ref{cor-modified-interface}-\ref{cor-modified-interface-ii}}\label{sec-proof-thm-cor}

In this section, we prove Theorem \ref{thm-modified-interface-location},  Corollary \ref{cor-modified-interface} and Corollary \ref{cor-modified-interface-ii}. We first prove Theorem \ref{thm-modified-interface-location}.

\begin{proof}[Proof of Theorem \ref{thm-modified-interface-location}]
Let $u(t,x)$ be an arbitrary transition front of \eqref{main-eqn} with interface location function $X(t)$ as in the statement of Theorem \ref{thm-modified-interface-location}. Then, by Theorem \ref{lem-propagation-estimate} and the assumption, we have
\begin{equation}\label{lower-upper-estimate}
c_{1}(t-t_{0}-T_{1})\leq X(t)-X(t_{0})\leq c_{2}(t-t_{0}+T_{2}),\quad t\geq t_{0}.
\end{equation}
We modify $X(t)$ within two steps by means of \eqref{lower-upper-estimate}. The first step gives a continuous modification. The second step gives the continuously differentiable modification as in the statement of the theorem. We remark that two inequalities in \eqref{lower-upper-estimate} play different roles in the following arguments. While the first inequality in \eqref{lower-upper-estimate} pushes $X(t)$ move to the right, the second inequality in \eqref{lower-upper-estimate} controls the possible jumps of $X(t)$.

\paragraph{\bf{Step 1.}} We show there is a continuous function $\tilde{X}:\R\to\R$ such that $\sup_{t\in\R}|\tilde{X}(t)-X(t)|<\infty$. Fix some $T>0$. At $t=0$, let
\begin{equation*}
Z^{+}(t;0)=X(0)+c_{2}(T+T_{2})+\frac{c_{1}}{2}t,\quad t\geq0
\end{equation*}
By the second inequality in \eqref{lower-upper-estimate}, $X(t)<Z^{+}(t;0)$ for all $[0,T]$. By the first inequality in \eqref{lower-upper-estimate}, we have $X(t)>Z^{+}(t;0)$ for all large $t$. Define $T_{1}^{+}=\inf\{t\geq0|X(t)\geq Z^{+}(t;0)\}$. By \eqref{lower-upper-estimate}, it is easy to see that $T_{1}^{+}\in[T,\frac{c_{2}(T+T_{2})+c_{1}T_{1}}{c_{1}/2}]$. At the moment $T_{1}^{+}$, $X(t)$ may jump, but, due to the second inequality in \eqref{lower-upper-estimate}, the jump is at most $c_{2}T_{2}$. Thus, we obtain
\begin{equation*}
\begin{split}
X(t)&<Z^{+}(t;0)\,\,\text{for}\,\,t\in[0,T_{1}^{+}),\\
X(T_{1}^{+})&\in[Z^{+}(T_{1}^{+};0)-c_{2}T_{2},Z^{+}(T_{1}^{+};0)+c_{2}T_{2}).
\end{split}
\end{equation*}

Next, at $t=T_{1}^{+}$, let
\begin{equation*}
Z^{+}(t;T_{1}^{+})=X(T_{1}^{+})+c_{2}(T+T_{2})+\frac{c_{1}}{2}(t-T_{1}^{+}),\quad t\geq T_{1}^{+}.
\end{equation*}
Then, $T_{2}^{+}=\inf\{t\geq T_{1}^{+}|X(t)\geq Z^{+}(t;T_{1}^{+})\}$ is well-defined, and $T_{2}^{+}-T_{1}^{+}\in[T,\frac{c_{2}(T+T_{2})+c_{1}T_{1}}{c_{1}/2}]$. Moreover, there hold
\begin{equation*}
\begin{split}
X(t)&<Z^{+}(t;T_{1}^{+})\,\,\text{for}\,\,t\in[T_{1}^{+},T_{2}^{+}),\\ X(T_{2}^{+})&\in[Z^{+}(T_{2}^{+};T_{1}^{+})-c_{2}T_{2},Z^{+}(T_{2}^{+};T_{1}^{+})+c_{2}T_{2}).
\end{split}
\end{equation*}

Repeating the above arguments, we obtain the following, there is a sequence of times $\{T_{n-1}^{+}\}_{n\geq1}$ satisfying $T_{0}^{+}=0$, $T_{n}^{+}-T_{n-1}^{+}\in[T,\frac{c_{2}(T+T_{2})+c_{1}T_{1}}{c_{1}/2}]$  and
\begin{equation}\label{an-estimate-0198932r592}
\begin{split}
X(t)&<Z^{+}(t;T_{n-1}^{+})\,\,\text{for}\,\,t\in[T_{n-1}^{+},T_{n}^{+}),\\
X(T_{n}^{+})&\in[Z^{+}(T_{n}^{+};T_{n-1}^{+}),Z^{+}(T_{n}^{+};T_{n-1}^{+})+c_{2}T_{2}),
\end{split}
\end{equation}
for all $n\geq1$, where $Z^{+}(t;T_{n-1}^{+})=X(T_{n-1}^{+})+c_{2}(T+T_{2})+\frac{c_{1}}{2}(t-T_{n-1}^{+})$.

We define $Z^{+}:[0,\infty)\to\R$ by setting
\begin{equation*}
Z^{+}(t)=Z^{+}(t;T_{n-1}^{+}),\quad t\in[T^{+}_{n-1},T^{+}_{n}),\quad n\geq1
\end{equation*}
Since $\sup_{n\geq1}[T^{+}_{n-1},T^{+}_{n})=[0,\infty)$, $Z^{+}(t)$ is well-defined. It follows from \eqref{an-estimate-0198932r592} that $X(t)<Z^{+}(t)$ for all $t\geq0$. Moreover, for $t\in[T^{+}_{n-1},T^{+}_{n})$,
\begin{equation*}
\begin{split}
Z^{+}(t)-X(t)&\leq X(T_{n-1}^{+})+c_{2}(T+T_{2})+\frac{c_{1}}{2}(t-T_{n-1}^{+})-[X(T_{n-1}^{+})+c_{1}(t-T_{n-1}^{+}-T_{1})]\\
&\leq c_{2}(T+T_{2})-\frac{c_{1}}{2}(t-T_{n-1}^{+})+c_{1}T_{1}\leq c_{2}(T+T_{2})+c_{1}T_{1}.
\end{split}
\end{equation*}
Hence, $0<Z^{+}(t)-X(t)\leq c_{2}(T+T_{2})+c_{1}T_{1}$ for all $t\in[0,\infty)$. Modifying $Z^{+}(t)$ near $T_{n-1}^{+}$ for $n\geq1$, we find a continuous function $\tilde{Z}^{+}:[0,\infty)\to\R$ such that $\sup_{t\in[0,\infty)}|\tilde{Z}^{+}(t)-X(t)|<\infty$.

Clearly, we can mimic the above arguments to find a continuous function $\tilde{Z}^{-}:(-\infty,0]\to\R$ such that $\sup_{t\in(-\infty,0]}|\tilde{Z}^{-}(t)-X(t)|<\infty$. Combining $\tilde{Z}^{\pm}(t)$ and modifying near $0$, we find a continuous function $\tilde{X}:\R\to\R$ such that $\sup_{t\in\R}|\tilde{X}(t)-X(t)|<\infty$.

\paragraph{\bf{Step 2.}} By Step 1, we assume, without loss of generality, that $X(t)$ is continuous. We proceed as in Step 1.

Fix any $t_{0}\in\R$ and consider it as an initial moment. At the initial moment $t_{0}$, we define $Z(t;t_{0})=X(t_{0})+C_{0}+\frac{c_{1}}{2}(t-t_{0})$ for $ t\geq t_{0}$, where $C_{0}>0$ is so large that $C_{0}>c_{2}T_{2}$. Clearly, $X(t_{0})<Z(t_{0};t_{0})$. By the first inequality in \eqref{lower-upper-estimate} and continuity, $X(t)$ will hit $Z(t;t_{0})$ sometime after $t_{0}$. Let $T_{1}(t_{0})$ be the first time that $X(t)$ hits $Z(t;t_{0})$, that is, $T_{1}(t_{0})=\min\big\{t\geq t_{0}\big|X(t)=Z(t;t_{0})\big\}$. It follows that $X(t)<Z(t;t_{0})$ for $t\in[t_{0},T_{1}(t_{0}))$ and $X(T_1(t_{0}))=Z(T_{1}(t_{0});t_{0})$. Moreover, $T_{1}(t_{0})-t_{0}\in\Big[\frac{C_{0}-c_{2}T_{2}}{c_{2}-c_{1}/2},\frac{C_{0}+c_{1}T_{1}}{c_{1}/2}\Big]$, which is a simple result of \eqref{lower-upper-estimate}.

Now, at the moment $T_{1}(t_{0})$, we define $Z(t;T_{1}(t_{0}))=X(T_{1}(t_{0}))+C_{0}+\frac{c_{1}}{2}(t-T_{1}(t_{0}))$ for $t\geq T_{1}(t_{0})$. Similarly, $X(T_{1}(t_{0}))<Z(T_{1}(t_{0});T_{1}(t_{0}))$ and $X(t)$ will hit $Z(t;T_{1}(t_{0}))$ sometime after $T_{1}(t_{0})$. Denote by $T_{2}(t_{0})$ the first time that $X(t)$ hits $Z(t;T_{1}(t_{0}))$. Then, $X(t)<Z(t;T_{1}(t_{0}))\,\,\text{for}\,\,t\in[T_{1}(t_{0}),T_{2}(t_{0}))$ and $X(T_{2}(t_{0}))=Z(T_{2}(t_{0});T_{1}(t_{0}))$, and $T_{2}(t_{0})-T_{1}(t_{0})\in\Big[\frac{C_{0}-c_{2}T_{2}}{c_{2}-c_{1}/2},\frac{C_{0}+c_{1}T_{1}}{c_{1}/2}\Big]$.

Repeating the above arguments, we obtain the following: there is a sequence of times $\{T_{n-1}(t_{0})\}_{n\in\N}$ satisfying $T_{0}(t_{0})=t_{0}$ and
\begin{equation}\label{uniform-time-interval}
T_{n}(t_{0})-T_{n-1}(t_{0})\in\bigg[\frac{C_{0}-c_{2}T_{2}}{c_{2}-c_{1}/2},\frac{C_{0}+c_{1}T_{1}}{c_{1}/2}\bigg],\quad\forall\,\,n\in\N,
\end{equation}
and  for any $n\in\N$
\begin{equation*}
\begin{split}
X(t)<Z(t;T_{n-1}(t_{0}))\,\,\text{for}\,\,t\in[T_{n-1}(t_{0}),T_{n}(t_{0}))\quad\text{and}\quad X(T_{n}(t_{0}))=Z(T_{n}(t_{0});T_{n-1}(t_{0})),
\end{split}
\end{equation*}
where $Z(t;T_{n-1}(t_{0}))=X(T_{n-1}(t_{0}))+C_{0}+\frac{c_{1}}{2}(t-T_{n-1}(t_{0}))$. Moreover, for any $n\in\N$ and $t\in[T_{n-1}(t_{0}),T_{n}(t_{0}))$, we conclude from \eqref{lower-upper-estimate} that
\begin{equation*}
\begin{split}
&Z(t;T_{n-1}(t_{0}))-X(t)\\
&\quad\quad\leq X(T_{n-1}(t_{0}))+C_{0}+\frac{c_{1}}{2}(t-T_{n-1}(t_{0}))-\big[X(T_{n-1}(t_{0}))+c_{1}(t-T_{n-1}(t_{0})-T_{1})\big]\\
&\quad\quad=C_{0}+c_{1}T_{1}-\frac{c_{1}}{2}(t-T_{n-1}(t_{0}))\leq C_{0}+c_{1}T_{1}.
\end{split}
\end{equation*}

Next, define $\tilde{Z}(\cdot;t_{0}):[t_{0},\infty)\ra\R$ by setting
\begin{equation}\label{definition-new-fun}
\tilde{Z}(t;t_{0})=Z(t;T_{n-1}(t_{0}))\quad\text{for}\,\, t\in[T_{n-1}(t_{0}),T_{n}(t_{0})),\,\,n\in\N.
\end{equation}
Since $[t_{0},\infty)=\cup_{n\in\N}[T_{n-1}(t_{0}),T_{n}(t_{0}))$ by \eqref{uniform-time-interval}, $\tilde{Z}(t;t_{0})$ is well-defined for all $t\geq t_{0}$. Notice $\tilde{Z}(t;t_{0})$ is strictly increasing and is linear on $[T_{n-1}(t_{0}),T_{n}(t_{0}))$ with slope $\frac{c_{1}}{2}$ for each $n\in\N$, and satisfies
\begin{equation*}
0\leq\tilde{Z}(t;t_{0})-X(t)\leq C_{0}+c_{1}T_{1},\quad t\geq t_{0}.
\end{equation*}

Due to \eqref{uniform-time-interval}, we can modify $\tilde{Z}(t;t_{0})$ near each $T_{n}(t_{0})$ for $n\in\N$ as follows. Fix some $\de_{*}\in\Big(0,\frac{1}{2}\frac{C_{0}-c_{2}T_{2}}{c_{2}-c_{1}/2}\Big)$. We modify $\tilde{Z}(t;s)$ by redefining it on the intervals $(T_{n}(t_{0})-\de_{*},T_{n}(t_{0}))$, $n\in\N$ as follows: define
\begin{equation*}
X(t;t_{0})=\begin{cases}
\tilde{Z}(t;t_{0}),\quad t\in[t_{0},\infty)\bs\cup_{n\in\N}(T_{n}(t_{0})-\de_{*},T_{n}(t_{0})),\\
X(T_{n}(t_{0}))+\de(t-T_{n}(t_{0})),\quad t\in(T_{n}(t_{0})-\de_{*},T_{n}(t_{0})),\,\,n\in\N,
\end{cases}
\end{equation*}
where $\de:[-\de_{*},0]\ra[-\frac{1}{2}c_{1}\de_{*},C_{0}]$ is twice continuously differentiable and satisfies
\begin{equation*}
\begin{split}
&\de(-\de_{*})=-\frac{c_{1}}{2}\de_{*},\quad\de(0)=C_{0},\\
&\dot{\de}(-\de_{*})=\frac{c_{1}}{2}=\dot{\de}(0),\quad\dot{\de}(t)\geq\frac{c_{1}}{2}\,\,\text{for}\,\,t\in(-\de_{*},0)\quad\text{and}\\
&\ddot{\de}(-\de_{*})=0=\ddot{\de}(0).
\end{split}
\end{equation*}
The existence of such a function $\de(t)$ is clear. Moreover, there exist $c_{\max}=c_{\max}(\de_{*})>0$ and $\tilde{c}_{\max}=\tilde{c}_{\max}(\de_{*})>0$ such that $\dot{\de}(t)\leq c_{\max}$ and $|\ddot{\de}(t)|\leq\tilde{c}_{\max}$ for $t\in(-\de_{*},0)$. Notice the above modification is independent of $n\in\N$ and $t_{0}$. Hence, $X(t;t_{0})$ satisfies the following uniform in $t_{0}$ properties:
\begin{itemize}
\item $0\leq X(t;t_{0})-X(t)\leq d_{\max}$ for some $d_{\max}>0$,
\item $\frac{c_{1}}{2}\leq\dot{X}(t;t_{0})\leq c_{\max}$,
\item $|\ddot{X}(t;t_{0})|\leq\tilde{c}_{\max}$.
\end{itemize}
Since $X(t)$ is continuous, so locally bounded, we may apply Arzel\`{a}-Ascoli theorem to conclude the existence of some continuously differentiable function $\tilde{X}:\R\to\R$ such that $X(t;t_{0})\to \tilde{X}(t)$ and $\dot{X}(t;t_{0})\to\dot{\tilde{X}}(t)$ locally uniformly in $t$ as $t_{0}\to-\infty$ along some subsequence. It's easy to see that $\tilde{X}(t)$ satisfies all the properties as in the statement of the theorem.
\end{proof}

Next, we prove Corollary \ref{cor-modified-interface}. Recall that for a given transition front $u(t,x)$ of \eqref{main-eqn}, $X^{\pm}_{\la}(t)$ are defined in \eqref{defn-interface-locations}.

\begin{proof}[Proof of Corollary \ref{cor-modified-interface}]
We modify the proof of Theorem \ref{lem-propagation-estimate}. Let $u(t,x)$ be an arbitrary transition front of \eqref{main-eqn} with interface location function $X(t)$. Since $f(t,x,u)\leq0$ for $(t,x,u)\in\R\times\R\times[0,\tilde{\theta}]$, we can find a function $f_{I}(u)$ such that $f(t,x,u)\leq f_{I}(u)$ for $(t,x,u)\in\R\times\R\times[0,1]$, where $f_{I}:[0,1]\to\R$ is $C^{2}$ and is of standard ignition type, that is, there exists $\theta_{I}\in(0,1)$ such that
\begin{equation*}
f_{I}(u)=0,\,\,u\in[0,\theta_{I}]\cup\{1\},\quad f_{I}(u)>0,\,\,u\in(\theta_{I},1)\quad\text{and}\quad f_{I}'(1)<0.
\end{equation*}

Fix some $\la\in(\theta,1)$. We  write $X^{+}(t)=X^{+}_{\la}(t)$. Since $\sup_{t\in\R}|X(t)-X^{+}(t)|<\infty$ by Lemma \ref{lem-bounded-interface-width}, it suffices to show
\begin{equation}\label{propagation-estimate-1-ig}
X^{+}(t)-X^{+}(t_{0})\leq c(t-t_{0}+T),\quad t\geq t_{0}
\end{equation}
for some $c>0$ and $T>0$.

To do so, we fix some $\tilde{\theta}_{I}\in(0,\theta_{I})$. Let $(c_{I},\phi_{I})$ with $c_{I}>0$ be the unique solution of
\begin{equation*}
\begin{cases}
J\ast\phi-\phi+c\phi_{x}+f_{I}(\phi)=0,\\
\phi_{x}<0,\,\,\phi(0)=\theta_{I},\,\,\phi(-\infty)=1\,\,\text{and}\,\,\phi(\infty)=\tilde{\theta}_{I}.
\end{cases}
\end{equation*}
Note that $\phi_{I}$ connects $\tilde{\theta}_{I}$ and $1$ instead of $0$ and $1$ (see Appendix \ref{app-ig-tw} for more properties about $\phi_{I}$; in Appendix \ref{app-ig-tw}, we consider traveling waves connecting $0$ and $1$, but by simple shift, all results there apply here).

Let $u_{0}:\R\to[0,1]$ be a uniformly continuous and nonincreasing function satisfying $u_{0}(x)=1$ for $x\leq 0$ and $u_{0}(x)=\tilde{\theta}_{I}$ for $x\geq x_{0}$, where $x_{0}>0$ is fixed. Clearly, $u(t_{0},\cdot+X^{+}_{\tilde{\theta}_{I}}(t_{0}))\leq u_{0}$. Applying comparison principle and Lemma \ref{lem-ignition-property}, we find
\begin{equation*}
u(t,x+X^{+}_{\tilde{\theta}_{I}}(t_{0}))\leq u_{I}(t-t_{0},x;u_{0})\leq\phi_{I}(x-c_{I}(t-t_{0})-\xi_{I})+\ep_{I}e^{-\om_{I}(t-t_{0})},\quad x\in\R,\quad t\geq t_{0}.
\end{equation*}
Let $\xi_{I}(\frac{\la}{2})$ be the unique point such that $\phi_{I}(\xi_{I}(\frac{\la}{2}))=\frac{\la}{2}$ and $T>0$ be such that $\ep_{I}e^{-\om_{I}T}=\frac{\la}{2}$ (we may make $\ep_{I}>\frac{\la}{2}$ if necessary). Setting $x_{*}=c_{I}(t-t_{0})+\xi_{I}+\xi_{I}(\frac{\la}{2})$, we conclude from the monotonicity of $\phi_{I}$ that for $x\geq x_{*}+1$ and $t\geq t_{0}+T$,
\begin{equation*}
\begin{split}
u(t,x+X^{+}_{\tilde{\theta}_{I}}(t_{0}))&\leq\phi_{I}(x_{*}+1-c_{I}(t-t_{0})-\xi_{I})+\ep_{I}e^{-\om_{I}T}\\
&<\phi_{I}(x_{*}-c_{I}(t-t_{0})-\xi_{I})+\ep_{I}e^{-\om_{I}T}=\la.
\end{split}
\end{equation*}
It then follows from the definition of $X^{+}(t)$ that
\begin{equation*}
X^{+}(t)\leq x_{*}+1+X^{+}_{\tilde{\theta}_{I}}(t_{0})=c_{I}(t-t_{0})+\xi_{I}+\xi_{I}(\frac{\la}{2})+1+X^{+}_{\tilde{\theta}_{I}}(t_{0}),\quad t\geq t_{0}+T.
\end{equation*}
Setting $C_{*}:=\sup_{t_{0}\in\R}|X^{+}(t_{0})-X^{+}_{\tilde{\theta}_{I}}(t_{0})|<\infty$ due to Lemma \ref{lem-bounded-interface-width}, we conclude
\begin{equation*}
X^{+}(t)-X^{+}(t_{0})\leq c_{I}(t-t_{0})+\xi_{I}+\xi_{I}(\frac{\la}{2})+1+C_{*},\quad t\geq t_{0}+T.
\end{equation*}

It remains to show that
\begin{equation}\label{what-to-show}
X^{+}(t)-X^{+}(t_{0})\leq\xi_{*},\quad t\in[t_{0},t_{0}+T]
\end{equation}
for some $\xi_{*}>0$ independent of $t_{0}$. To do so, let $\tilde{u}_{0}$ be the $u_{0}$ in the proof of Theorem \ref{lem-propagation-estimate}. Then, we have $\tilde{u}_{0}(\cdot-X^{-}(t_{0}))\leq u(t_{0},\cdot)\leq u_{0}(\cdot-X^{+}_{\tilde{\theta}_{I}}(t_{0}))$, where $X^{-}(t)=X^{-}_{\la}(t)$. Since $f_{B}\leq f\leq f_{I}$, we apply comparison principle to conclude that
\begin{equation*}
u_{B}(t-t_{0},x-X^{-}(t_{0});\tilde{u}_{0})\leq u(t,x)\leq u_{I}(t-t_{0},x-X^{+}_{\tilde{\theta}_{I}}(t_{0});u_{0}),\quad x\in\R,\quad t\geq t_{0}.
\end{equation*}
We then conclude \eqref{what-to-show} from the continuity of $u_{B}(t-t_{0},x-X^{-}(t_{0})$ and $u_{I}(t-t_{0},x-X^{+}_{\tilde{\theta}_{I}}(t_{0});u_{0})$, and the fact $\sup_{t_{0}\in\R}|X^{-}(t_{0})-X^{+}_{\tilde{\theta}_{I}}(t_{0})|<\infty$ due to Lemma \ref{lem-bounded-interface-width}. This completes the proof.
\end{proof}

Finally, we prove Corollary \ref{cor-modified-interface-ii}.

\begin{proof}[Proof of Corollary \ref{cor-modified-interface-ii}]
Note first we can find a $C^{2}$ Fisher-KPP nonlinearity $f_{\rm KPP}:[0,1]\to\R$ such that $f(t,x,u)\leq f_{\rm KPP}(u)$ for all $(t,x,u)\in\R\times\R\times[0,1]$. Let $u(t,x)$ be the transition front as in the statement of Corollary \ref{cor-modified-interface-ii}, that is, there exist $r>0$ and $h>0$ such that
\begin{equation*}
u(t_{0},x+X(t_{0}))\leq e^{-r(x-h)},\quad (t_{0},x)\in\R\times\R,
\end{equation*}

Fixed $\la\in(0,1)$. Setting $h_{0}:=h+\sup_{t_{0}\in\R}|X(t_{0})-X^{+}_{\la}(t_{0})|<\infty$, we find
\begin{equation*}
u(t_{0},x+X^{+}_{\la}(t_{0})+h_{0})\leq e^{-rx},\quad (t_{0},x)\in\R\times\R.
\end{equation*}
Then, we can find some uniformly continuous function $u_{0}:\R\to[0,1]$ satisfying
\begin{equation*}
\lim_{x\to-\infty}u_{0}(x)=1\quad\text{and}\quad\lim_{x\to\infty}\frac{u_{0}(x)}{e^{-rx}}=1
\end{equation*}
such that
\begin{equation*}
u(t_{0},x+X^{+}_{\la}(t_{0})+h_{0})\leq u_{0}(x),\quad (t_{0},x)\in\R\times\R.
\end{equation*}
Note that we may assume, without loss of generality, that $r$ is so small that it is the decay rate of some traveling wave of $\phi_{r}(x-c_{r}t)$ (satisfying $\phi_{r}(-\infty)=1$ and $\phi_{r}(\infty)=0$) with speed $c_{r}=\frac{\int_{\R}J(y)e^{ry}dy-1+f_{\rm KPP}'(0)}{r}>0$ of
\begin{equation}\label{eqn-kpp-homo}
u_{t}=J\ast u-u+f_{\rm KPP}(u),
\end{equation}
that is, $\lim_{x\to\infty}\frac{\phi_{r}(x)}{e^{-rx}}=1$ (see \cite{CaCh04} and \cite{ShZh12-2}). In particular, we have
\begin{equation}\label{exact-decay-rate-2015201}
\lim_{x\to\infty}\frac{u_{0}(x)}{\phi_{r}(x)}=1.
\end{equation}
Moreover, there holds
\begin{equation}\label{exact-decay-rate-20152015}
\lim_{x\to\infty}\frac{\phi_{r}'(x)}{\phi_{r}(x)}=-r.
\end{equation}
To see this, we notice $\frac{J\ast\phi_{r}}{\phi_{r}}-1+c_{r}\frac{\phi_{r}'}{\phi_{r}}+\frac{f_{\rm KPP}(\phi_{r})}{\phi_{r}}=0$. Clearly, $\lim_{x\to\infty}\frac{f_{\rm KPP}(\phi_{r}(x))}{\phi_{r}(x)}=f_{\rm KPP}'(0)$. For $\frac{J\ast\phi_{r}}{\phi_{r}}$, we have
\begin{equation*}
\frac{[J\ast\phi_{r}](x)}{\phi_{r}(x)}=\frac{e^{-rx}}{\phi_{r}(x)}\int_{\R}J(y)e^{ry}\frac{\phi_{r}(x-y)}{e^{-r(x-y)}}dy\to J(y)e^{ry}dy\quad\text{as}\quad x\to\infty
\end{equation*}
by \eqref{exact-decay-rate-2015201} and dominated convergence theorem. From which, we conclude \eqref{exact-decay-rate-20152015}.

Then, arguing as in the proof of Corollary \ref{cor-modified-interface}, we conclude the result from the stability of $\phi_{r}(x-c_{r}t)$, that is,
\begin{equation}\label{stability-kpp}
\lim_{t\to\infty}\bigg|\frac{u_{\rm KPP}(t,x;u_{0})}{\phi_{r}(x-c_{r}t)}-1\bigg|=0,
\end{equation}
where $u_{\rm KPP}(t,x;u_{0})$ is the solution of \eqref{eqn-kpp-homo} with initial data $u_{\rm KPP}(0,\cdot;u_{0})=u_{0}$. We remark that \eqref{stability-kpp} follows from \cite[Theorem 2.6]{ShZh12-2}. Also, by means of \eqref{exact-decay-rate-2015201} and \eqref{exact-decay-rate-20152015}, it can be proven as that of \cite[Theorem 1.3]{ShSh14-kpp}.
\end{proof}


\section*{Acknowledgements} 

The authors would like to thank the referee for carefully reading the manuscript, pointing out some problems that we were not aware of, and drawing our attention to the reference \cite{BaChm99}.

\appendix

\section{Ignition traveling waves}\label{app-ig-tw}

Consider the homogeneous ignition equation
\begin{equation}\label{eqn-homo-ignition}
u_{t}=J\ast u-u+f_{I}(u),\quad (t,x)\in\R\times\R,
\end{equation}
where $J$ is as in (H1), and the $C^{2}$ function $f_{I}:[0,1]\to\R$ is of standard ignition type, that is, there is $\theta_{I}\in(0,1)$ such that
\begin{equation*}
f_{I}(u)=0,\,\,u\in[0,\theta_{I}]\cup\{1\},\quad f_{I}(u)>0,\,\,u\in(\theta_{I},1)\quad\text{and}\quad f_{I}'(1)<0.
\end{equation*}

It was proven in \cite{Cov-thesis} that the problem
\begin{equation*}
\begin{cases}
J\ast\phi-\phi+c\phi_{x}+f_{I}(\phi)=0,\\
\phi_{x}<0,\,\,\phi(0)=\theta_{I},\,\,\phi(-\infty)=1\,\,\text{and}\,\,\phi(\infty)=0.
\end{cases}
\end{equation*}
for $(c,\phi)$ has a unique classical solution $(c_{I},\phi_{I})$ with $c_{I}>0$.

We used the following result in the previous sections.

\begin{lem}\label{lem-ignition-property}
Let $u_{0}:\R\to[0,1]$ be uniformly continuous and satisfy
\begin{equation*}
\lim_{x\to-\infty}u_{0}(x)=1\quad\text{and}\quad u_{0}(x)\leq e^{-\al_{0} (x-x_{0})},\,\,x\in\R
\end{equation*}
for some $\al_{0}>0$ and $x_{0}\in\R$, then there exist $\om_{I}=\om_{I}(\al_{0})>0$ and $\ep_{I}>0$ such that for any $\ep\in(0,\ep_{I}]$ there exist $\xi^{\pm}_{I}=\xi^{\pm}_{I}(\ep,u_{0})\in\R$ such that
\begin{equation*}
\phi_{I}(x-c_{I}t-\xi^{-}_{I})-\ep e^{-\om_{I}t}\leq u_{I}(t,x;u_{0})\leq\phi_{I}(x-c_{I}t-\xi^{+}_{I})+\ep e^{-\om_{I}t},\quad x\in\R
\end{equation*}
for all $t\geq0$, where $u_{I}(t,x;u_{0})$ is the solution of \eqref{eqn-homo-ignition} with initial data $u_{I}(0,\cdot;u_{0})=u_{0}$.
\end{lem}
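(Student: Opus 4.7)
The plan is to prove the lemma by constructing super- and sub-solutions of Fife-McLeod type and invoking the comparison principle for the nonlocal equation \eqref{eqn-homo-ignition}. Specifically, I would take
\begin{equation*}
\bar{u}(t,x) = \phi_I\bigl(x - c_I t - \xi^+_I - \sigma(1 - e^{-\omega_I t})\bigr) + \epsilon e^{-\omega_I t},
\end{equation*}
\begin{equation*}
\underline{u}(t,x) = \phi_I\bigl(x - c_I t - \xi^-_I + \sigma(1 - e^{-\omega_I t})\bigr) - \epsilon e^{-\omega_I t},
\end{equation*}
with $\sigma > 0$ and $\omega_I > 0$ to be tuned. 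Substituting $\bar{u}$ into the operator and using the traveling wave identity $J\ast\phi_I - \phi_I + c_I\phi_I' + f_I(\phi_I) = 0$, the supersolution inequality reduces to
\begin{equation*}
-\omega_I e^{-\omega_I t}\bigl[\sigma\phi_I'(z) + \epsilon\bigr] \;\geq\; f_I\bigl(\phi_I(z) + \epsilon e^{-\omega_I t}\bigr) - f_I(\phi_I(z)),
\end{equation*}
where $z$ denotes the moving coordinate. I would verify this by splitting $\R$ into three regions according to the value of $\phi_I(z)$: (a) $\phi_I(z) \geq 1 - \delta_1$, where $f_I'(1) < 0$ gives $f_I(\phi_I+q) - f_I(\phi_I) \leq -\kappa_1 q$, absorbed by choosing $\omega_I \leq \kappa_1$; (b) $\phi_I(z) \in [\delta_2, 1-\delta_1]$, where $|\phi_I'(z)| \geq \mu > 0$ by monotonicity and compactness, so the $\sigma\phi_I'(z)$ term dominates once $\sigma$ is chosen large relative to $\epsilon$ and $\|f_I'\|_\infty$; (c) $\phi_I(z) \leq \delta_2$, where taking $\epsilon_I$ small enough that $\phi_I + \epsilon_I \leq \theta_I$ forces both $f_I$ terms to vanish identically, and the remaining inequality $\sigma|\phi_I'(z)| \geq \epsilon$ is secured by matching $\omega_I$ to the linear decay rate $\lambda(\alpha_0) = \int_\R J(y)e^{\alpha_0 y}dy - 1$ coming from the exponential bound on $u_0$. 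The subsolution verification is analogous, with signs reversed and $f_I$ extended by zero below $0$.

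The second step is the initial comparison $\underline{u}(0,\cdot) \leq u_0 \leq \bar{u}(0,\cdot)$. For the lower bound, I would pick $\xi^-_I$ sufficiently negative so that $\phi_I(x - \xi^-_I) \leq \epsilon$ for all $x$ above a threshold $M$, and use $u_0(x) \to 1$ at $-\infty$ together with $\phi_I \leq 1$ to handle $x \leq -M$. For the upper bound, I would pick $\xi^+_I$ sufficiently positive so that $\phi_I(x - \xi^+_I) \geq 1 - \epsilon$ on any prescribed bounded left region, use $u_0 \leq 1$ there, and then use the exponential bound $u_0(x) \leq e^{-\alpha_0(x - x_0)}$ beyond a cutoff where $e^{-\alpha_0(x-x_0)} \leq \epsilon$. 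With both sandwich inequalities in place at $t = 0$, the comparison principle for \eqref{eqn-homo-ignition} propagates them for all $t \geq 0$, and absorbing the bounded shift $\sigma(1 - e^{-\omega_I t})$ into $\xi^\pm_I$ yields exactly the form in the statement.

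The main obstacle is Region~(c) of the supersolution verification. Because $f_I \equiv 0$ on $[0,\theta_I]$, there is no built-in damping of the constant-in-$x$ perturbation $\epsilon e^{-\omega_I t}$ ahead of the wave, while $|\phi_I'(z)| \to 0$ as $z \to \infty$ prevents the shift term $\sigma\phi_I'(z)$ from compensating uniformly. Resolving this requires a separate linear estimate: in the region $\bar{u} \leq \theta_I$ the equation reduces to the linear $u_t = J\ast u - u$, and the bound $u_0(x) \leq e^{-\alpha_0(x - x_0)}$ propagates to $u_I(t,x;u_0) \leq e^{-\alpha_0(x - x_0) + \lambda(\alpha_0) t}$. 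Combining this with the wave decay $\phi_I(z) \sim C e^{-\alpha_I z}$ through the characteristic equation $\int_\R J(y) e^{\alpha y}dy - 1 = c_I \alpha$ pins down the admissible $\omega_I$ as a function of $\alpha_0$, which is the source of the stated dependence $\omega_I = \omega_I(\alpha_0)$.
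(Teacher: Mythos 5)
Your overall architecture (Fife--McLeod sub/supersolutions plus the comparison principle, with the time-dependent shift absorbed into $\xi_I^{\pm}$ at the end) matches the paper's, and you correctly isolate the real difficulty: ahead of the wave $f_I$ vanishes, so a perturbation that is constant in $x$ is not damped, while $\phi_I'(z)\to 0$ kills the compensating shift term. But your proposed repair of Region~(c) does not work, and this is exactly where the proof lives. First, the auxiliary bound $u_I(t,x;u_0)\le e^{-\al_0(x-x_0)+\la(\al_0)t}$ is not justified: since $f_I\ge 0$, the function $v=e^{-\al_0(x-x_0)+\la(\al_0)t}$ satisfies $v_t-(J\ast v-v)-f_I(v)=-f_I(v)\le 0$, i.e.\ it is a \emph{sub}solution of \eqref{eqn-homo-ignition}, not a supersolution, so it cannot be compared from above; the equation ``reduces to the linear one'' only where the unknown solution is already below $\theta_I$, which is circular. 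Second, even granting such an a priori bound, it does not rescue the ansatz: the required inequality $\si|\phi_I'(z)|\ge\ep$ genuinely fails for large $z$, and a nonlocal first-touching/comparison argument needs the differential inequality at the touching point, which could lie in Region~(c); ruling out touching there via a separate estimate would require restructuring the whole comparison, which you have not done.

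The paper's fix is different and is the missing idea: the perturbation is taken to be $\pm\ep e^{-\om t}\,\Ga_{\al}(x-c_It-\xi^{\pm}(t))$, where $\Ga_{\al}$ equals $1$ behind the wave and $e^{-\al(x-L_1)}$ ahead of it, with $\al=\al_0/2$. Ahead of the wave $\Ga_{\al}'=-\al\Ga_{\al}$, so the advection of the perturbation contributes a damping term of size $c_I\al\,\ep e^{-\om t}\Ga_{\al}$, while Lemma \ref{lem-tech} (which uses the symmetry of $J$, so that $\int_{\R}J(y)e^{\al y}dy-1=O(\al^2)$) bounds the adverse nonlocal term $J\ast\Ga_{\al}-\Ga_{\al}$ by $\tfrac{c_I\al}{4}\Ga_{\al}$ there; choosing $\om\le\tfrac{c_I\al}{4}$ then closes Region~(c). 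This also explains why the hypothesis $u_0(x)\le e^{-\al_0(x-x_0)}$ and the dependence $\om_I=\om_I(\al_0)$ enter at all: the initial sandwiching $u_0\le\phi_I(\cdot-\xi^{+})+\ep\Ga_{\al}(\cdot-\xi^{+})$ forces the perturbation to decay no faster than $u_0$. If you replace your constant-in-$x$ perturbation by this weighted one, your three-region case analysis for Regions~(a) and (b) goes through essentially as you wrote it.
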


Lemma \ref{lem-ignition-property} can be proven as \cite[Theorem 1.4]{ShSh14-3}; we here simply recall it for completeness. To do so, we fix $L_{1}>0$ so large that
\begin{equation*}
\phi_{I}(-L_{1})\geq\frac{1+\tilde{\theta}_{I}}{2}\quad\text{and}\quad\phi_{I}(L_{1})\leq\frac{\theta_{I}}{2},
\end{equation*}
where $\tilde{\theta}_{I}\in(\theta_{I},1)$ is such that
\begin{equation}\label{uniform-decaying-1-ig}
f_{I}'(u)\leq-\tilde{\beta}_{I},\quad u\in[\tilde{\theta}_{I},1]
\end{equation}
for some $\tilde{\beta}_{I}>0$ (such $\tilde{\theta}_{I}$ and $\tilde{\beta}_{I}$ exist due to $f_{I}'(1)<0$).

For $\al>0$, let $\Ga_{\al}:\R\to[0,1]$ be a smooth nonincreasing function satisfying
\begin{equation*}
\Ga_{\al}=\begin{cases}
1,\quad &x\leq-L_{1}-1,\\
e^{-\al(x-L_{1})},\quad &x\geq L_{1}+1.
\end{cases}
\end{equation*}

We have

\begin{lem}\label{lem-tech}
There exists $\al_{*}>0$ such that such that for any $\al\in(0,\al_{*}]$ there exists $L_{2}=L_{2}(\al)>L_{1}+1$ such that
\begin{equation*}
\big|[J\ast\Ga_{\al}](x)-e^{-\al(x-L_{1})}\big|\leq\frac{c_{I}}{4}\al e^{-\al(x-L_{1})},\quad x\geq L_{2}.
\end{equation*}
\end{lem}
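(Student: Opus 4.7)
\textbf{Proof proposal for Lemma \ref{lem-tech}.} The plan is to split the convolution at $y = L_1+1$, using the fact that $\Ga_\al(y)$ coincides exactly with $e^{-\al(y-L_1)}$ for $y \geq L_1+1$, and show that the error terms are of order $\al^2 e^{-\al(x-L_1)}$ (hence $\leq \tfrac{c_I}{4}\al e^{-\al(x-L_1)}$ for $\al$ small) plus a super-exponentially small remainder controlled by tail decay of $J$.

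First, I would write
\begin{equation*}
[J\ast\Ga_\al](x) = \int_{-\infty}^{L_1+1} J(x-y)\Ga_\al(y)\,dy + \int_{L_1+1}^{\infty} J(x-y)e^{-\al(y-L_1)}\,dy =: A(x) + B(x).
\end{equation*}
Changing variables $z = x - y$ in $B(x)$ and factoring out $e^{-\al(x-L_1)}$ gives
\begin{equation*}
B(x) = e^{-\al(x-L_1)} \int_{-\infty}^{x-L_1-1} J(z) e^{\al z}\,dz,
\end{equation*}
so that
\begin{equation*}
[J\ast\Ga_\al](x) - e^{-\al(x-L_1)} = e^{-\al(x-L_1)}\Bigl[\int_\R J(z)e^{\al z}\,dz - 1\Bigr] - e^{-\al(x-L_1)}\int_{x-L_1-1}^{\infty} J(z) e^{\al z}\,dz + A(x).
\end{equation*}

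Next, for the main term I would exploit the symmetry of $J$ from (H1): since $J(z) = J(-z)$, the first moment vanishes and Taylor expansion with dominated convergence (using $\int J(z) z^2 dz < \infty$ from (H1)) yields
\begin{equation*}
\int_\R J(z) e^{\al z}\,dz - 1 = \frac{\al^2}{2}\int_\R J(z) z^2\,dz + o(\al^2) = O(\al^2) \quad \text{as } \al \to 0^+.
\end{equation*}
Therefore there exists $\al_* > 0$ such that for every $\al \in (0,\al_*]$ one has $|\int_\R J(z)e^{\al z}dz - 1| \leq \tfrac{c_I}{8}\al$, contributing at most $\tfrac{c_I}{8}\al e^{-\al(x-L_1)}$.

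For the two remainder terms $A(x)$ and the tail $e^{-\al(x-L_1)}\int_{x-L_1-1}^\infty J(z)e^{\al z}dz$, the idea is that both are super-exponentially small in $x$ because $\int J(z) e^{\gamma z} dz < \infty$ for every $\gamma \in \R$ by (H1). Concretely, $|A(x)| \leq \int_{x-L_1-1}^\infty J(z)\,dz \leq e^{-2\al(x-L_1-1)}\int J(z)e^{2\al z}dz$, and similarly for the tail integral, so both are bounded by $C(\al)e^{-2\al(x-L_1-1)}$. Since $\al e^{-\al(x-L_1)}$ decays only at rate $\al$, choosing $L_2(\al)$ sufficiently large makes each remainder $\leq \tfrac{c_I}{16}\al e^{-\al(x-L_1)}$. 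Summing, the total error is at most $(\tfrac{c_I}{8} + \tfrac{c_I}{16} + \tfrac{c_I}{16})\al e^{-\al(x-L_1)} = \tfrac{c_I}{4}\al e^{-\al(x-L_1)}$, as required.

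The only genuinely delicate point is ensuring the $\al^2$ rate of convergence of $\int J(z)e^{\al z}dz$ to $1$, which is essential to produce the linear factor $\al$ in the bound; everything else is bookkeeping with the moment bounds from (H1). Once $\al_*$ is fixed from the main term, $L_2(\al)$ is then chosen as the threshold beyond which the tail estimates yield the stated inequality.
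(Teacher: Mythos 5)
Your proof is correct and follows exactly the route the paper indicates: the paper defers the details to \cite[Lemma 4.1]{ShSh14-3} but explicitly identifies the key mechanism as the symmetry of $J$ forcing $\int_{\R}J(z)e^{\al z}dz-1=O(\al^{2})$, which is precisely the main term in your decomposition, with the boundary and tail contributions absorbed by taking $L_{2}(\al)$ large. No gaps.
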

\begin{proof}
See \cite[Lemma 4.1]{ShSh14-3}. It requires the symmetry of $J$ so that $\int_{\R}J(x)e^{\al x}dx-1=O(\al^{2})$.
\end{proof}

Now, we prove Lemma \ref{lem-ignition-property}.

\begin{proof}[Proof of Lemma \ref{lem-ignition-property}]

Let $\al=\frac{\al_{0}}{2}$ and $L_{2}=L_{2}(\al)$ as in Lemma \ref{lem-tech}. For any $\ep\in(0,\ep_{I}]$, where $\ep_{I}>0$ is to be chosen, we can find $\xi^{\pm}=\xi_{0}^{\pm}(\ep,u_{0})\in\R$ such that
\begin{equation}\label{ignition-homo-initial-estimate}
\phi_{I}(x-\xi^{-})-\ep\Ga_{\al}(x-\xi^{-})\leq u_{0}(x)\leq\phi_{I}(x-\xi^{+})+\ep\Ga_{\al}(x-\xi^{+}),\quad x\in\R.
\end{equation}
Setting $\xi^{\pm}(t)=\xi^{\pm}\pm\frac{A\ep}{\om}(1-e^{-\om t})$, where $A>0$ and $\om>0$ is to be chosen, we define
\begin{equation*}
u^{\pm}(t,x)=\phi(x-ct-\xi^{\pm}(t))\pm\ep e^{-\om t}\Ga(x-ct-\xi^{\pm}(t)),\quad t\geq0,
\end{equation*}
where $\phi=\phi_{I}$, $c=c_{I}$ and $\Ga=\Ga_{\al}$. Clearly, $u^{-}(0,\cdot)\leq u_{0}\leq u^{+}(0,\cdot)$. Thus, if we can show that $u^{-}(t,x)$ and $u^{+}(t,x)$ are sub- and super-solutions, respectively, then the lemma follows.

We show that $u^{-}(t,x)$ is a sub-solution; $u^{+}(t,x)$ being a super-solution can be proven along the same line. We compute
\begin{equation*}
\begin{split}
&u^{-}_{t}-[J\ast u^{-}-u^{-}]-f_{I}(u^{-})\\
&=A\ep e^{-\om t}\phi'+\ep\om e^{-\om t}\Ga-\ep e^{-\om t}(A\ep e^{-\om t}-c)\Ga'+\ep e^{-\om t}[J\ast\Ga-\Ga]+f_{I}(\phi)-f_{I}(u^{-}),
\end{split}
\end{equation*}
where $\phi$, $\phi'$, $\Ga$ and $\Ga'$ are computed at $x-ct-\xi^{-}(t)$ and $J\ast\Ga=\int_{\R}J(x-y)\Ga(y-ct-\xi^{-}(t))dy$. We consider three cases.

\paragraph{\textbf{Case 1.} $x-ct-\xi^{-}(t)\leq-L_{1}-1$} In this case, $\Ga=1$, $\Ga'=0$ and hence $J\ast\Ga-\Ga\leq1-1=0$. Moreover, $\phi\geq\frac{1+\tilde{\theta}_{I}}{2}$ by the monotonicity of $\phi$ and the choice of $L_{1}$, which implies that $u_{-}\geq\phi-\ep_{I}\geq\tilde{\theta}_{I}$ if we choose
\begin{equation}\label{condition-ep-1}
\ep_{I}\leq\frac{1-\tilde{\theta}_{I}}{2}.
\end{equation}
It then follows that $f_{I}(\phi)-f_{I}(u^{-})\leq-\ep\tilde{\beta}_{I}e^{-\om t}\Ga$. Hence, we obtain
\begin{equation*}
u^{-}_{t}-[J\ast u^{-}-u^{-}]-f_{I}(u^{-})\leq\om\ep e^{-\om t}\Ga-\ep\tilde{\beta}_{I}e^{-\om t}\Ga\leq0
\end{equation*}
if we choose
\begin{equation}\label{condition-om-1}
\om\leq\tilde{\beta}_{I}.
\end{equation}

\paragraph{\textbf{Case 2.} $x-ct-\xi^{-}(t)\in[-L_{1}-1,L_{2}]$} In this case,
\begin{equation*}
A\ep e^{-\om t}\phi'\leq A\ep e^{-\om t}\sup_{x\in[-L_{1}-1,L_{2}]}\phi'(x)<0,
\end{equation*}
\begin{equation*}
\ep\om e^{-\om t}\Ga-\ep e^{-\om t}(A\ep e^{-\om t}-c)\Ga'+\ep e^{-\om t}[J\ast\Ga-\Ga]\leq\ep e^{-\om t}(\om+1)
\end{equation*}
if we choose
\begin{equation}\label{condition-ep-2}
\ep_{I}\leq\frac{c}{A},
\end{equation}
and $f_{I}(\phi)-f_{I}(u^{-})\leq(\sup_{u\in[0,2]}|f'_{I}(u)|)\ep e^{-\om t}$ (note that it's safe to extend $f_{I}$ to $(1,2]$ so that $\sup_{u\in[0,2]}|f'_{I}(u)|<\infty$). It then follows that
\begin{equation*}
u^{-}_{t}-[J\ast u^{-}-u^{-}]-f_{I}(u^{-})\leq \ep e^{-\om t}\bigg[A\sup_{x\in[-L_{1}-1,L_{2}]}\phi'(x)+\om+1+\sup_{u\in[0,2]}|f'_{I}(u)|\bigg]\leq0
\end{equation*}
if we choose
\begin{equation}\label{condition-A}
A\geq-\bigg[\sup_{x\in[-L_{1}-1,L_{2}]}\phi'(x)\bigg]^{-1}\bigg[1+2\sup_{u\in[0,2]}|f'_{I}(u)|\bigg],
\end{equation}
since $\om\leq\tilde{\beta}_{I}\leq\sup_{u\in[0,2]}|f'_{I}(u)|$ due to \eqref{condition-om-1}.

\paragraph{\textbf{Case 3.} $x-ct-\xi^{-}(t)\geq L_{2}$} In this case, $\Ga=e^{-\al(x-ct-\xi^{-}(t)-L_{1})}$, $\Ga'=-\al\Ga$ and hence,
\begin{equation*}
\ep\om e^{-\om t}\Ga-\ep e^{-\om t}(A\ep e^{-\om t}-c)\Ga'=\ep e^{-\om t}[\om+A\al\ep e^{-\om t}-c\al]\Ga.
\end{equation*}
By Lemma \ref{lem-tech}, we have $\ep e^{-\om t}[J\ast\Ga-\Ga]\leq\ep e^{-\om t}\frac{\al c}{4}\Ga$. Since $f_{I}(\phi)=0=f_{I}(u^{-})$ (note it's safe to do zero extension of $f$ on $(-\infty,0)$), we obtain
\begin{equation*}
u^{-}_{t}-[J\ast u^{-}-u^{-}]-f_{I}(u^{-})\leq\ep e^{-\om t}\bigg[\om+A\al\ep e^{-\om t}-c\al+\frac{\al c}{4}\bigg]\Ga\leq0
\end{equation*}
if we choose
\begin{equation}\label{condition-om-ep}
\om\leq\frac{\al c}{4}\quad\text{and}\quad \ep_{I}\leq\frac{c}{4A}
\end{equation}

Consequently, if we choose $A$ as in \eqref{condition-A}, $\om$ as in \eqref{condition-om-1} and \eqref{condition-om-ep}, and $\ep_{I}$ as in \eqref{condition-ep-1} and \eqref{condition-om-ep}, then we have $u^{-}_{t}-[J\ast u^{-}-u^{-}]-f_{I}(u^{-})\leq0$ for $t\geq0$. This completes the proof.
\end{proof}


\bibliographystyle{amsplain}

\end{document}